\theoremstyle{plain}
  \newtheorem{thm}{Theorem}[section]
  \newtheorem{lem}[thm]{Lemma}
  \newtheorem{cor}[thm]{Corollary}
  \newtheorem{prop}[thm]{Proposition}
\theoremstyle{definition}
  \newtheorem{defn}[thm]{Definition}
  \newtheorem{ex}[thm]{Example}
\theoremstyle{remark}
  \newtheorem{rem}[thm]{Remark}
  \newtheorem*{ack}{Acknowledgment}
\numberwithin{equation}{section}
\DeclareMathOperator{\diam}{diam}
\DeclareMathOperator{\supp}{supp}
\DeclareMathOperator{\ObsDiam}{ObsDiam}
\DeclareMathOperator{\me}{me}
\DeclareMathOperator{\Sep}{Sep}
\DeclareMathOperator{\dconc}{{\it d}_{{\rm conc}}}
\newcommand{\Lip}{\mathcal{L}{\it ip}}
\newcommand{\CP}{\C P}
\newcommand{\RP}{\R P}
\newcommand{\HP}{\H P}
\newcommand{\cL}{\mathcal{L}}
\newcommand{\cM}{\mathcal{M}}
\newcommand{\cP}{\mathcal{P}}
\newcommand{\cX}{\mathcal{X}}
\newcommand{\field}[1]{\mathbb{#1}}
\newcommand{\C}{\field{C}}
\newcommand{\R}{\field{R}}
\renewcommand{\H}{\field{H}}
\begin{document}

\title[limit formulas]
{Limit formulas for metric measure invariants
and phase transition property}

\thanks{The authors are partially supported by a Grant-in-Aid
for Scientific Research from the Japan Society for the Promotion of Science.
The first author is supported by Research Fellowships of the Japan Society for the Promotion of Science for Young Scientists.}

\begin{abstract}
  We generalize the observable diameter and the separation distance
  for metric measure spaces to those for pyramids, and prove
  some limit formulas for these invariants 
  for a convergent sequence of pyramids.
  We obtain various applications of our limit formulas as follows.
  We have a criterion of the phase transition property
  for a sequence of metric measure spaces or pyramids,
  and find some examples of symmetric spaces of noncompact type
  with the phase transition property.
  We also give a simple proof of a theorem
  in \cite{FnSy} on the limit of an $N$-L\'evy family.
\end{abstract}

\author{Ryunosuke Ozawa \and Takashi Shioya}

\address{Mathematical Institute, Tohoku University, Sendai 980-8578,
  JAPAN}

\date{\today}

\keywords{metric measure space, concentration, observable distance,
separation distance, pyramid, L\'evy family, dissipation, phase transition}

\subjclass[2010]{Primary 53C23}


\maketitle

\section{Introduction}
\label{sec:intro}

The study of Gromov-Hausdorff convergence of Riemannian manifolds
is one of the central topics in Riemannian geometry.
For a Gromov-Hausdorff convergence,
the upper bound of dimension is necessary for various reasons.
One of the main reasons is that the Gromov-Hausdorff precompactness
cannot be expected for a sequence of manifolds with unbounded dimension.
Different from the Gromov-Hausdorff metric,
Gromov \cite{Gmv:greenbook}*{\S 3.$\frac12$} introduced the observable distance function,
say $\dconc$, on the set, say $\cX$, of mm-spaces (metric measure spaces),
based on the idea of the concentration of measure phenomenon due to
L\'evy and Milman.  He constructed a natural compactification, say $\Pi$,
of $(\cX,\dconc)$,
which is useful to describe the asymptotic behavior of a sequence of
Riemannian manifolds with unbounded dimension.
In general, the limit of a sequence of manifolds
is no longer an mm-space and is an element of $\Pi$.
We have $\{S^n(\sqrt{n})\}_{n=1}^\infty$ as a typical example of such a sequence,
where $S^n(r)$ denotes an $n$-dimensional sphere of radius $r$
in the Euclidean space $\R^{n+1}$.
The sequence $\{S^n(\sqrt{n})\}$ converges to an element of $\Pi$,
called the \emph{virtual infinite-dimensional standard Gaussian space},
which is the infinite-dimensional version of a Euclidean space
with the standard Gaussian measure (see \cites{Sy:mmlim,Sy:book}).
On the other hand, $\{S^n(\sqrt{n})\}$ is not Gromov-Hausdorff precompact
and has no Gromov-Hausdorff convergent subsequence. 

The observable diameter and the separation distance
are two of the most important and fundamental invariants of an mm-space.
It is a natural problem to investigate the limit of these two invariants
for a convergent sequence of mm-spaces.
In this paper, we generalize these two invariants to those for an element of $\Pi$,
and prove some formulas for the limit of these two invariants
for a convergent sequence in $\Pi$.
We apply these formulas to study the asymptotic behavior of a sequence
of Riemannian manifolds with unbounded dimension.
The L\'evy family property and
the $\infty$-dissipation property for a sequence of mm-spaces (pyramids)
are two of the extremal properties in the asymptotic behavior.
The L\'evy family property corresponds to condensation
and the $\infty$-dissipation property does to evaporation.
We consider a property like the phase transition for a sequence of
mm-spaces or pyramids, say the \emph{phase transition property}.
We obtain a useful criterion for the phase transition property,
and prove that some symmetric spaces of compact type
have the phase transition property.

We describe more details for the compactification $\Pi$ of $\cX$.
For two mm-spaces $X$ and $Y$,  we define that $X \prec Y$ holds if
there is a $1$-Lipschitz map from $Y$ to $X$ that pushes the measure on $Y$
forward to that on $X$.
This is a partial order relation, called the \emph{Lipschitz order relation}.
We define a \emph{pyramid} to be a family of mm-spaces
forming a directed set with respect to the Lipschitz order
and with some closedness condition (see Definition \ref{defn:pyramid}).
For example, for a given mm-space $X$, the family
\[
\cP_X := \{\;Y \in \cX \mid Y \prec X\;\}
\]
is a pyramid, say the \emph{pyramid associated with $X$}.
The compactification $\Pi$ of $\cX$ is, in fact, realized as the family of pyramids.
It has a natural metric and the map
\[
\iota : \cX \ni X \longmapsto \cP_X \in \Pi
\]
is a topological embedding map.
The pyramid associated with a one-point mm-space $*$
is $\cP_* = \{*\}$, which is the minimal pyramid with respect to
the inclusion relation.
The family $\cX$ itself is the maximal pyramid.
A sequence of mm-spaces (resp.~pyramids) is a \emph{L\'evy family}
if and only if it converges to a one-point mm-space
(resp.~$\cP_*$).
See Corollary \ref{cor:Levy}.
A sequence of mm-spaces (resp.~pyramids) \emph{$\infty$-dissipates}
if and only if the sequence of pyramids associated with them
(resp.~the sequence itself) converges to the maximal pyramid $\cX$
(see Lemma \ref{lem:dissipate}).

We generalize the observable diameter and the separation distance
to those for a pyramid, and prove the following limit formulas.
Denote by $\ObsDiam(\cP;-\kappa)$ and
$\Sep(\cP;\kappa_0,\kappa_1,\dots,\kappa_N)$
the observable diameter and the separation distance of a pyramid $\cP$,
respectively (see Definitions \ref{defn:ObsDiam}, \ref{defn:Sep},
\ref{defn:ObsDiam-pyramid}, and \ref{defn:Sep-pyramid}).
Convergence in $\Pi$ is called \emph{weak convergence}.

\begin{thm}[Limit formulas]
  \label{thm:lim}
  Let $\cP$ and $\cP_n$, $n=1,2,\dots$, be pyramids.
  If $\cP_n$ converges weakly to $\cP$ as $n\to\infty$, then
  \begin{align*}
    \ObsDiam(\cP;-\kappa)
    &= \lim_{\varepsilon\to 0+} \liminf_{n\to\infty}
    \ObsDiam(\cP_n;-(\kappa+\varepsilon)) \\
    &= \lim_{\varepsilon\to 0+} \limsup_{n\to\infty}
    \ObsDiam(\cP_n;-(\kappa+\varepsilon)),\\
    \Sep(\cP;\kappa_0,\kappa_1,\dots,\kappa_N)
    &= \lim_{\varepsilon\to 0+} \liminf_{n\to\infty}
    \Sep(\cP_n;\kappa_0-\varepsilon,\kappa_1-\varepsilon,\dots,\kappa_N-\varepsilon) \\
    &= \lim_{\varepsilon\to 0+} \limsup_{n\to\infty}
    \Sep(\cP_n;\kappa_0-\varepsilon,\kappa_1-\varepsilon,\dots,\kappa_N-\varepsilon)
  \end{align*}
  for any $\kappa,\kappa_0,\dots,\kappa_N > 0$.
\end{thm}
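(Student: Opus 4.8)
The plan is to reduce both formulas to the single real line. For an mm-space $X$, every value $\diam(f_*\mu_X;1-\kappa)$ entering $\ObsDiam(X;-\kappa)$ is realized by the observable line space $(\R,|\cdot|,f_*\mu_X)$, and this line space lies below $X$ in the Lipschitz order because the $1$-Lipschitz map $f$ pushes $\mu_X$ forward. Since a pyramid is closed downward under $\prec$, this gives the intrinsic description
\[
\ObsDiam(\cP;-\kappa)=\sup\{\diam(\nu;1-\kappa)\mid (\R,|\cdot|,\nu)\in\cP\},
\]
and likewise $\Sep(\cP;\kappa_0,\dots,\kappa_N)$ is a supremum of separations of measures of configurations lying in $\cP$. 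Everything is thus governed by the behaviour of the elementary quantities $\diam(\nu;1-\kappa)$ and $\Sep(\nu;\kappa_0,\dots,\kappa_N)$ under weak convergence of Borel probability measures $\nu$ on $\R$. The one analytic input I use is the quantile fact: if $\nu_n\to\nu$ weakly then $\diam(\nu;1-\kappa)=\lim_{\varepsilon\to0+}\liminf_n\diam(\nu_n;1-\kappa-\varepsilon)=\lim_{\varepsilon\to0+}\limsup_n\diam(\nu_n;1-\kappa-\varepsilon)$; in particular, by monotonicity in $\varepsilon$, for each fixed $\varepsilon>0$ one has $\limsup_n\diam(\nu_n;1-\kappa-\varepsilon)\le\diam(\nu;1-\kappa)$, an upper semicontinuity with margin. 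The same holds for $\Sep$ under the shifts $\kappa_i\mapsto\kappa_i-\varepsilon$.

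By monotonicity of $\ObsDiam(\cP;-\kappa)$ in $\kappa$ and of $\Sep$ in the $\kappa_i$, the maps $\varepsilon\mapsto\liminf_n(\cdots)$ and $\varepsilon\mapsto\limsup_n(\cdots)$ are monotone, so the two $\varepsilon\to0+$ limits exist; and since $\liminf\le\limsup$, it suffices to prove the two one-sided bounds
\[
\lim_{\varepsilon\to0+}\liminf_n(\cdots)\ \ge\ \ObsDiam(\cP;-\kappa)\ \ge\ \lim_{\varepsilon\to0+}\limsup_n(\cdots),
\]
which squeeze all four quantities together. I use weak convergence $\cP_n\to\cP$ in its Kuratowski form: (1) every $X\in\cP$ is a $\dconc$-limit of some $X_n\in\cP_n$, and (2) every $\dconc$-subsequential limit of points $X_n\in\cP_n$ lies in $\cP$. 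For the lower bound I take a line space $(\R,|\cdot|,\nu)\in\cP$ with $\diam(\nu;1-\kappa)$ close to $\ObsDiam(\cP;-\kappa)$, fix $X\in\cP$ dominating it, and use (1) to pick $X_n\in\cP_n$ with $X_n\to X$. By definition of $\dconc$ the observable producing $\nu$ has approximants on $X_n$, yielding line spaces $(\R,|\cdot|,\nu_n)\in\cP_n$ with $\nu_n\to\nu$ weakly; since $\ObsDiam(\cP_n;-(\kappa+\varepsilon))\ge\diam(\nu_n;1-\kappa-\varepsilon)$, the $\liminf$ half of the quantile fact gives $\lim_{\varepsilon\to0+}\liminf_n\ObsDiam(\cP_n;-(\kappa+\varepsilon))\ge\diam(\nu;1-\kappa)$, and a supremum over such line spaces finishes the lower bound.

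The upper bound is the heart of the matter. Fix $\varepsilon>0$, pass to a subsequence realizing $\limsup_n\ObsDiam(\cP_n;-(\kappa+\varepsilon))$, and choose line spaces $(\R,|\cdot|,\nu_n)\in\cP_n$ nearly attaining it, so $\diam(\nu_n;1-\kappa-\varepsilon)$ tends to that $\limsup$. Translating by an isometry I center the mass-$(1-\kappa-\varepsilon)$ bulk of $\nu_n$ so that it stays in a fixed bounded interval. The difficulty is that up to $\kappa+\varepsilon$ of the mass of $\nu_n$ may escape to infinity, so $\{\nu_n\}$ need not converge weakly. To regain compactness I clamp: for a fixed large $R$ the truncation $T_R:\R\to[-R,R]$ is $1$-Lipschitz, so $(\R,|\cdot|,(T_R)_*\nu_n)$ lies below $(\R,|\cdot|,\nu_n)$ and hence in $\cP_n$, and it is supported in $[-R,R]$; the family is therefore $\dconc$-precompact, a $\dconc$-limit $J_R$ exists, and $J_R\in\cP$ by (2). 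Since clamping fixes the bulk and only deposits the escaping mass at $\pm R$, for $R$ large the partial diameter at the shifted level is preserved, and the quantile fact applied to $(T_R)_*\nu_n\to\mu_{J_R}$ gives $\limsup_n\diam(\nu_n;1-\kappa-\varepsilon)\le\diam(\mu_{J_R};1-\kappa)\le\ObsDiam(\cP;-\kappa)$; letting $\varepsilon\to0+$ yields the bound.

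The main obstacle is exactly this escaping-mass phenomenon: at the fixed level $1-\kappa$ the near optimal observables of $\cP_n$ can carry a little mass off to infinity, destroying compactness and making $\ObsDiam(\cdot;-\kappa)$ only upper semicontinuous. The device overcoming it is the $\varepsilon$-shift in the level together with the downward closure of pyramids (which keeps the clamped line spaces inside $\cP_n$) and property (2) (which deposits the limit inside $\cP$); this is why equality holds only after the outer $\lim_{\varepsilon\to0+}$ and why the $\liminf$ and $\limsup$ collapse to one value. The separation distance is handled by the same three-step scheme—intrinsic reduction to configurations in $\cP$, transport along (1) for the lower bound, and clamp plus compactness plus (2) for the upper bound—with every level $\kappa_i$ shifted to $\kappa_i-\varepsilon$ and the monotonicities reversed accordingly.
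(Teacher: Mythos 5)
Your lower bound is sound, and your intrinsic description of $\ObsDiam(\cP;-\kappa)$ as the supremum of $\diam(\nu;1-\kappa)$ over line spaces $(\R,|\cdot|,\nu)\in\cP$ is correct (it amounts to the paper's Lemma \ref{lem:rconti-ObsDiam} plus Proposition \ref{prop:ObsDiam-dom}). The genuine gap is in your upper bound, at the step ``since clamping fixes the bulk and only deposits the escaping mass at $\pm R$, for $R$ large the partial diameter at the shifted level is preserved.'' This is false whenever the escaping mass can exceed the level, i.e.\ whenever $\kappa+\varepsilon\ge 1/2$, and no choice of $R$ (nor of centering) repairs it. Concretely, take $\kappa=0.55$, $\varepsilon=0.05$, so the level is $1-\kappa-\varepsilon=0.4$, and let
\[
\nu_n=0.4\,\mathrm{unif}\bigl[-\tfrac12,\tfrac12\bigr]+0.3\,\delta_n+0.3\,\delta_{2n},
\qquad \cP_n:=\cP_{(\R,\nu_n)}.
\]
Then $\diam(\nu_n;0.4)=1$ for large $n$ (indeed $\ObsDiam((\R,\nu_n);-0.6)=1$, since the image of the bulk is always a $0.4$-set of diameter $\le 1$). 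But for any fixed $R$ and $n>2R$ the clamp $T_R$ sends \emph{both} far atoms to the single point $R$, so $(T_R)_*\nu_n=0.4\,\mathrm{unif}[-\tfrac12,\tfrac12]+0.6\,\delta_R$ and $\diam((T_R)_*\nu_n;0.4)=0$: the two atoms, which were at mutual distance $n$ and hence harmless for the partial diameter of $\nu_n$, are superimposed into one atom of mass $0.6\ge 1-\kappa-\varepsilon$. Your chain $\limsup_n\diam(\nu_n;1-\kappa-\varepsilon)\le\diam(\mu_{J_R};1-\kappa)$ then reads $1\le 0$. (The theorem survives here only because the limit pyramid contains \emph{other} observables, namely $n$-dependent fold maps sending the two atoms to two separated finite locations, which make $\ObsDiam(\cP;-\kappa)=+\infty$; these are exactly what the universal clamp $T_R$ cannot produce.) This is why the paper never clamps a single observable: its Lemma \ref{lem:ObsDiam-2R} compares the truncated invariant $\ObsDiam^{2R}$ of two pyramids through the Hausdorff distance of their entire $(1,R)$-measurements $\cM(\cdot;1,R)$ — which do contain such folded pushforwards — and Theorem \ref{thm:rhoR} converts weak convergence into that Hausdorff closeness. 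Repairing your argument would require replacing $T_R$ by maps adapted to the configuration of escaping mass, which is essentially re-proving that lemma.

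A second deficiency: the separation-distance half is asserted (``the same three-step scheme'') rather than proved, and it is not a routine repetition. For $\Sep$ the object to be transported between $\cP_n$ and $\cP$ is not a measure on $\R$ but an $(N+1)$-tuple of mutually separated sets; carrying such a configuration across is the actual content of the paper's Lemma \ref{lem:Sep-2R}, which encodes the sets $A_0,\dots,A_N$ by the $1$-Lipschitz map $F=(\min\{d_X(\cdot,A_i),r\})_{i}$ into $(\R^{N+1},\|\cdot\|_\infty)$, passes to a nearby element of the other pyramid's $(N+1,R)$-measurement, and recovers separated sets from sublevel/superlevel sets of the coordinates. In particular the $\Sep$ formula needs $N$-measurements for all $N$, not just observables on the line, so this half of the theorem is missing from your proposal, not merely compressed.
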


Elek \cite{Ek} proved a similar result, which is only an inequality
and for another compactification of the space $\cX$ with a stronger topology.
He also assumes the boundedness of diameter for a sequence of mm-spaces,
so that $\{S^n(\sqrt{n})\}$ cannot be treated in his result.

For the proof of Theorem \ref{thm:lim}, we introduce a new metric
on $\Pi$ using measurements,
and prove some formulas between the metric and
the observable diameter/the separation distance.

We consider the limit behavior of a given sequence
of mm-spaces (or pyramids) under scale changes.
For $t > 0$ and a pyramid $\cP$,
we denote by $t\cP$ the scale change of $\cP$ with factor $t$.
We define that a sequence of pyramids $\cP_n$, $n=1,2,\dots$,
has \emph{the phase transition property}
if there is a sequence of positive real numbers $c_n$, $n=1,2,\dots$, such that
\begin{enumerate}
\item if $t_n/c_n \to 0$ as $n\to\infty$, then
  $\{t_n\cP_n\}$ is a L\'evy family;
\item if $t_n/c_n \to +\infty$ as $n\to\infty$, then
  $\{t_n\cP_n\}$ $\infty$-dissipates.
\end{enumerate}
We call such a sequence $\{c_n\}$ a sequence of \emph{critical scale order}.
The second named author proved in \cites{Sy:mmlim,Sy:book}
that the sequences of spheres $S^n(1)$ and complex projective spaces $\CP^n$
both have the phase transition property with critical scale order $\sim \sqrt{n}$.
Note that there are many examples of manifolds
that do not have the phase transition property.
We intuitively expect spaces with high symmetry
to admit the phase transition property.
We apply the limit formulas (Theorem \ref{thm:lim})
to obtain the following criterion for the phase transition property.

\begin{thm}[Criterion for phase transition property] \label{thm:phase}
  Let $\{\cP_n\}$ be a sequence of pyramids.
  Then the following {\rm(1)} and {\rm(2)} are equivalent to each other. 
  \begin{enumerate}
  \item $\{\cP_n\}$ has the phase transition property.
  \item There exists a sequence $\{r_n\}_{n=1}^\infty$ of positive real numbers
  such that
  \[
  \ObsDiam(\cP_n;-\kappa) \sim r_n
  \]
  for any $\kappa$ with $0 < \kappa < 1$,
  where $a_n \sim b_n$ means that the ratios $a_n/b_n$
  and $b_n/a_n$ are bounded.
  \end{enumerate}
  In this case, $\{1/r_n\}$ is a sequence of critical scale order.
\end{thm}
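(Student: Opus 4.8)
The plan is to reduce both implications to two clean characterizations of the extremal behaviours in terms of the observable diameter, and then to an elementary lemma about sequences of positive numbers. Throughout I use the scaling identity $\ObsDiam(t\cP;-\kappa)=t\,\ObsDiam(\cP;-\kappa)$, valid for every pyramid $\cP$, every $t>0$ and every $\kappa$, which is immediate from the definition of $t\cP$ and of the observable diameter. The two characterizations I need are, for an arbitrary sequence of pyramids $\mathcal{Q}_n$ and all $\kappa\in(0,1)$: (i) $\{\mathcal{Q}_n\}$ is a L\'evy family $\iff \ObsDiam(\mathcal{Q}_n;-\kappa)\to0$; and (ii) $\{\mathcal{Q}_n\}$ $\infty$-dissipates $\iff \ObsDiam(\mathcal{Q}_n;-\kappa)\to+\infty$.

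Granting (i) and (ii), here is the core of the argument. Fix a candidate scale $c_n>0$, write $s_n:=t_n/c_n$ and set $a_n(\kappa):=\ObsDiam(c_n\cP_n;-\kappa)=c_n\ObsDiam(\cP_n;-\kappa)$, so that by scaling $\ObsDiam(t_n\cP_n;-\kappa)=s_n\,a_n(\kappa)$. By (i), the L\'evy requirement in the phase transition property says exactly that $s_n a_n(\kappa)\to0$ for every $s_n\to0+$ and every $\kappa$; by (ii), the dissipation requirement says that $s_n a_n(\kappa)\to+\infty$ for every $s_n\to+\infty$ and every $\kappa$. I then invoke the elementary fact that, for a sequence of positive reals $a_n$, one has $s_n a_n\to0$ for all $s_n\to0+$ iff $\sup_n a_n<\infty$, and $s_n a_n\to+\infty$ for all $s_n\to+\infty$ iff $\inf_n a_n>0$ (for the nontrivial directions, test against $s_n=a_n^{-1/2}$ along a bad subsequence). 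Hence $c_n$ witnesses the phase transition property iff $0<\inf_n a_n(\kappa)$ and $\sup_n a_n(\kappa)<\infty$ for every $\kappa\in(0,1)$, i.e.\ iff $\ObsDiam(\cP_n;-\kappa)\sim 1/c_n$ for every such $\kappa$. Reading this equivalence in the two directions gives precisely (1)$\Rightarrow$(2) with $r_n:=1/c_n$ and (2)$\Rightarrow$(1) with $c_n:=1/r_n$, and in both cases $\{1/r_n\}=\{c_n\}$ is the asserted critical scale order.

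It remains to prove (i) and (ii), and this is where Theorem \ref{thm:lim} enters. The forward directions are soft: a L\'evy family converges to $\cP_*$ and an $\infty$-dissipating sequence converges to $\cX$ by Corollary \ref{cor:Levy} and Lemma \ref{lem:dissipate}, and since $\ObsDiam(\cP_*;-\kappa)=0$ and $\ObsDiam(\cX;-\kappa)=+\infty$, the limit formula of Theorem \ref{thm:lim} forces $\ObsDiam(\mathcal{Q}_n;-\kappa)\to0$ and $\to+\infty$ respectively (using monotonicity of $\ObsDiam(\,\cdot\,;-\kappa)$ in the parameter $\kappa$ to discard the auxiliary $\varepsilon$). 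The converse direction of (i) is equally soft: by compactness of $\Pi$ every subsequential weak limit $\cP$ of $\{\mathcal{Q}_n\}$ has, by Theorem \ref{thm:lim}, $\ObsDiam(\cP;-\kappa)=0$ for all small $\kappa$, whence every member of $\cP$ is a one-point space and $\cP=\cP_*$; thus $\mathcal{Q}_n\to\cP_*$ and the family is L\'evy.

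The main obstacle is the converse direction of (ii): deducing $\infty$-dissipation from $\ObsDiam(\mathcal{Q}_n;-\kappa)\to+\infty$ for all $\kappa\in(0,1)$. I would prove it directly at the level of the separation distance, bypassing the limit formula. Fix $N$ and parameters $\kappa_0,\dots,\kappa_N>0$ with $\sum_i\kappa_i<1$, and choose $\delta>0$ with $(2N+1)\delta\le 1-\sum_i\kappa_i$. Since $\ObsDiam(\mathcal{Q}_n;-(1-\delta))\to+\infty$, for each large $n$ there are $X\in\mathcal{Q}_n$ and a $1$-Lipschitz $f\colon X\to\R$ whose pushforward $\mu:=f_*m_X$ satisfies $\diam(\mu;\delta)\ge D_n$ with $D_n\to+\infty$; by definition of the partial diameter this means $\mu$ is \emph{flat at scale $D_n$}, i.e.\ every interval of length $<D_n$ has $\mu$-measure $<\delta$. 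Scanning $\R$ from left to right, I would then carve out consecutive intervals $J_0,\dots,J_N$, where $J_i$ is a shortest interval of $\mu$-mass $\ge\kappa_i$, separated by gaps of length $D_n$; flatness guarantees that each gap and each atom costs less than $\delta$ in mass, so the total mass used is at most $\sum_i\kappa_i+(2N+1)\delta\le1$ and the construction fits inside $\supp\mu$. The preimages $A_i:=f^{-1}(J_i)$ then have $m_X(A_i)\ge\kappa_i$ and, since $f$ is $1$-Lipschitz and the $J_i$ are $D_n$-separated, pairwise distance $\ge D_n$; hence $\Sep(\mathcal{Q}_n;\kappa_0,\dots,\kappa_N)\ge\Sep(X;\kappa_0,\dots,\kappa_N)\ge D_n\to+\infty$. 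As $N$ and the $\kappa_i$ were arbitrary, $\{\mathcal{Q}_n\}$ $\infty$-dissipates. The delicate points to get right are the optimal-split bookkeeping for the partial diameter and the passage from a member $X$ to the pyramid-level invariants, but the flatness mechanism---``large observable diameter at a level $\kappa$ near $1$ forces arbitrarily many mutually far subsets of prescribed mass''---is the heart of the matter.
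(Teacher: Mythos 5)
Your proposal is correct, and its skeleton---reduce the phase transition property, via the scaling identity and an elementary test-sequence lemma, to the two-sided bounds $0<\liminf_n c_n\ObsDiam(\cP_n;-\kappa)$ and $\limsup_n c_n\ObsDiam(\cP_n;-\kappa)<+\infty$---is exactly the paper's; the genuine difference is in how the dissipation half is organized. The paper reformulates condition (2) of Definition \ref{defn:phase} as \emph{weak} dissipation of the single critically scaled sequence $\{c_n\cP_n\}$ and then invokes Lemma \ref{lem:phase3}, whose proof is soft (compactness of $\Pi$ plus both limit formulas, Theorems \ref{thm:lim-ObsDiam} and \ref{thm:lim-Sep}) and rests on the static, single-mm-space scanning construction of Lemma \ref{lem:phase} through Lemma \ref{lem:phase2}. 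You instead characterize $\infty$-dissipation of every supercritically scaled sequence directly by divergence of the observable diameter: the soft implication via Lemma \ref{lem:dissipate}, compactness, $\ObsDiam(\cX;-\kappa)=+\infty$ and the limit formula (your monotonicity trick for absorbing the auxiliary $\varepsilon$ is valid, because the shift $\kappa+\varepsilon$ sits on the favorable side of the monotonicity), and the hard implication by running the scanning construction along the sequence. That scanning argument is precisely the proof of the paper's Lemma \ref{lem:phase}---your mass bookkeeping $(2N+1)\delta\le 1-\sum_i\kappa_i$ is more generous than the paper's condition $N(1-\kappa)\le 1-\sum_i\kappa_i$, but both suffice---and the paper's closing remark explicitly anticipates your route (``generalizing Lemma \ref{lem:phase} for a pyramid''), calling it essentially the same. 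What your route buys: weak dissipation and Lemma \ref{lem:phase3} disappear, and the separation-distance limit formula is needed only insofar as it is hidden in Lemma \ref{lem:dissipate}. What the paper's route buys: the delicate construction is quarantined in a static lemma about one mm-space, so all sequence-level steps are soft consequences of the limit formulas, which localizes exactly the bookkeeping you flag as delicate. Two small points to tighten: state your elementary sequence lemma for $[0,+\infty]$-valued sequences, since $\ObsDiam(\cP_n;-\kappa)$ may a priori be $0$ or $+\infty$ and both degenerate cases must be seen to be excluded by each side of the equivalence; and your characterization (i) is literally Corollary \ref{cor:Levy} ((1)$\Leftrightarrow$(3)), so it can be cited rather than rederived.
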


Note that (2) of the theorem means that the order of $\ObsDiam(X_n;-\kappa)$
as $n\to\infty$ is independent of $\kappa$.
The theorem is a first discovery for the value of the
lower estimate of the observable diameter.


We give an application of Theorem \ref{thm:phase}.
Let $\RP^n$, $\CP^n$, $\HP^n$ denote
the $n$-dimensional real, complex, and quaternionic projective spaces,
respectively.
$SO(n)$, $SU(n)$, and $Sp(n)$ denote
the special orthogonal group of order $n$,
the special unitary group of order $n$,
and the compact symplectic group of order $n$.
$V_k(\R^n)$, $V_k(\C^n)$, and $V_k(\H^n)$ denote
the real, complex, and quaternionic Stiefel manifolds, respectively.
We equip them with the Riemannian distance function
and the normalized Riemannian volume measure.

\begin{cor} \label{cor:sym-sp}
  Let $\{k_n\}$ be a sequence of natural numbers with $k_n \le n$.
  The sequences $\{S^n(1)\}$, $\{\RP^n\}$, $\{\CP^n\}$, $\{\HP^n\}$,
  $\{SO(n)\}$, $\{SU(n)\}$, $\{Sp(n)\}$,
  $\{V_{k_n}(\R^n)\}$, $\{V_{k_n}(\C^n)\}$, and $\{V_{k_n}(\H^n)\}$
  all have the phase transition property
  of critical scale order $\sim \sqrt{n}$.
\end{cor}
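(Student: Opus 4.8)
The plan is to reduce everything to a two-sided estimate of the observable diameter and then invoke Theorem~\ref{thm:phase}. For each family let $X_n$ denote the $n$-th space and $\cP_n:=\cP_{X_n}$ the associated pyramid, so that $\ObsDiam(\cP_n;-\kappa)=\ObsDiam(X_n;-\kappa)$ by Definition~\ref{defn:ObsDiam-pyramid}. By Theorem~\ref{thm:phase} it suffices to prove
\[
\ObsDiam(X_n;-\kappa)\sim \frac{1}{\sqrt n}\qquad\text{for every fixed }\kappa\in(0,1);
\]
indeed, $r_n=1/\sqrt n$ then verifies condition~(2) of Theorem~\ref{thm:phase}, which yields the phase transition property with $\{1/r_n\}=\{\sqrt n\}$ a sequence of critical scale order. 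The constants hidden in $\sim$ may depend on $\kappa$, which is harmless. The proof then splits into upper and lower bounds of order $1/\sqrt n$, organized so that only the spheres and the three families of groups require an actual concentration estimate, everything else being deduced by monotonicity of $\ObsDiam$ under the Lipschitz order: if $X\prec Y$ then $\ObsDiam(X;-\kappa)\le\ObsDiam(Y;-\kappa)$, since a $1$-Lipschitz $f$ on $X$ pulls back along the dominating map $\varphi$ to a $1$-Lipschitz $f\circ\varphi$ on $Y$ with the same push-forward measure.

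For the upper bounds I would use that $S^m(1)$ and $SO(n),SU(n),Sp(n)$ (with the normalized bi-invariant metric) are L\'evy families with Gaussian-type concentration function $\alpha(r)\le C\exp(-cnr^2)$, coming from the Ricci lower bound of order $n$ together with the L\'evy--Gromov/Bakry--\'Emery isoperimetric inequality. Choosing $r$ with $\alpha(r)\le\kappa/2$ gives $\ObsDiam(\,\cdot\,;-\kappa)\le 2r\le C_\kappa/\sqrt n$. The remaining spaces inherit their upper bounds by domination through the standard Riemannian submersions and quotients, which are $1$-Lipschitz and measure preserving: the Hopf-type maps give $\RP^n\prec S^n(1)$, $\CP^n\prec S^{2n+1}(1)$, $\HP^n\prec S^{4n+3}(1)$, while the quotient $SO(n)\to V_{k}(\R^n)$ (and its complex and quaternionic analogues) gives $V_{k}(\R^n)\prec SO(n)$, etc. Since $m\sim n$ in every case, all these upper bounds are of order $1/\sqrt n$.

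For the lower bounds the groups and the Stiefel manifolds dominate a sphere: the column map $g\mapsto g e_1$ is a Riemannian submersion $SO(n)\to S^{n-1}(1)$, $SU(n)\to S^{2n-1}(1)$, $Sp(n)\to S^{4n-1}(1)$, and forgetting all but the first vector is a submersion $V_{k}(\R^n)\to S^{n-1}(1)$ (likewise over $\C$ and $\H$); hence $S^{m}(1)\prec G_n$ and $\ObsDiam(G_n;-\kappa)\ge\ObsDiam(S^{m}(1);-\kappa)$. For the spheres themselves a coordinate (height) function is $1$-Lipschitz with push-forward the law of $x_1$, which rescaled by $\sqrt m$ converges to $N(0,1)$, so its partial diameter is at least $c_\kappa/\sqrt m\sim 1/\sqrt n$. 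For the projective spaces, which do not obviously dominate a sphere, I would instead use the explicit function $x\mapsto|z_1(x)|$, the modulus of the first coordinate in $\R$, $\C$, or $\H$: it is $1$-Lipschitz and invariant under the defining $\Z_2$-, $S^1$-, or $S^3$-action, hence descends to a $1$-Lipschitz function on $\RP^n,\CP^n,\HP^n$; its push-forward is the law of the modulus of the first coordinate of a uniform point, which after rescaling by $\sqrt n$ converges to a nondegenerate distribution, so its partial diameter is again at least $c_\kappa/\sqrt n$. Combining with the upper bounds gives $\ObsDiam(X_n;-\kappa)\sim 1/\sqrt n$ for each fixed $\kappa$, uniformly in the choice of $\{k_n\}$ for the Stiefel manifolds, since there $S^{n-1}(1)\prec V_{k_n}(\R^n)\prec SO(n)$ squeezes the observable diameter between two quantities of order $1/\sqrt n$.

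The main obstacle is matching the normalizations. One must fix the bi-invariant metrics on the groups so that, simultaneously, the orbit submersions land on spheres of radius of order $1$ (so that the sphere lower bound contributes $1/\sqrt n$ and not a different order) and the Ricci curvature stays $\ge cn$ (so that the concentration upper bound is also $1/\sqrt n$); checking that a single natural normalization achieves both is the delicate, though routine, computation. The second point requiring care is the single-coordinate spread estimates for the projective spaces: one must verify that the push-forward of $|z_1|$ genuinely has partial diameter of order $1/\sqrt n$ rather than being over-concentrated, which amounts to identifying the rescaled limiting law of the first coordinate's modulus under the uniform measure on the corresponding sphere.
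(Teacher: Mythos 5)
Your proposal is correct, and its skeleton is the same as the paper's: reduce the corollary via Theorem~\ref{thm:phase} to the two-sided estimate $\ObsDiam(X_n;-\kappa)\sim 1/\sqrt{n}$, get the upper bound for the groups from $\mathrm{Ric}\sim n$, get lower bounds from sphere domination, and squeeze the Stiefel manifolds via $S^{n-1}(1)\prec V_{k_n}(\R^n)\prec SO(n)$ and monotonicity (Proposition~\ref{prop:ObsDiam-dom}). Where you genuinely diverge is in the spheres and projective spaces: the paper simply cites \cite{Sy:mmlim}*{Corollaries 5.8 and 5.11} for $\ObsDiam\sim 1/\sqrt{n}$ when $X_n=S^n(1),\RP^n,\CP^n,\HP^n$ (remarking that the $\CP^n$ argument adapts to $\RP^n$ and $\HP^n$), whereas you supply self-contained arguments: Ricci-based concentration for the sphere upper bound, the coordinate CLT for the sphere lower bound, Hopf-type domination $\RP^n\prec S^n(1)$, $\CP^n\prec S^{2n+1}(1)$, $\HP^n\prec S^{4n+3}(1)$ for the projective upper bounds, and the invariant modulus function $|z_1|$ descending to the quotient for the projective lower bounds. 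These steps are all sound: a Riemannian submersion or covering with mutually isometric fibers is $1$-Lipschitz and pushes normalized volume to normalized volume; a fiber-invariant $1$-Lipschitz function descends to a $1$-Lipschitz function on the metric quotient; and the lower bound on the partial diameter of the rescaled push-forward laws survives weak convergence by Lemma~\ref{lem:dP-diam} together with right-continuity (Lemma~\ref{lem:rconti-ObsDiam}). What the paper's citations buy is brevity and sharper information (exact Gaussian limit values, not just orders); what your route buys is independence from \cite{Sy:mmlim} and an explicit mechanism for the projective spaces that the paper leaves implicit. The metric-normalization issue you flag for the groups (one normalization must simultaneously give $\mathrm{Ric}\ge cn$ and an orbit map that is $1$-Lipschitz onto a unit-radius sphere) is real but routine, and the paper passes over it silently in exactly the same way when it asserts $S^n(1)\prec SO(n)$ and $\mathrm{Ric}(SO(n))\sim n$.
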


The corollary for $\{S^n(1)\}$ and $\{\CP^n\}$ is already known
as in \cite{Sy:mmlim}*{Theorem 1.1}.

%
%
%

The limit formulas are also useful to study
an $N$-L\'evy family,
which is defined in \cite{FnSy} by using the separation distance
(see Definition \ref{defn:N-Levy}).
A $1$-L\'evy family coincides with a L\'evy family.
A typical example of an $N$-L\'evy family
is a sequence of closed Riemannian manifolds $M_n$, $n=1,2,\dots$, such that
the $N$-th nonzero eigenvalue of the Laplacian on $M_n$
is divergent as $n\to\infty$.
We prove that the limit of an $N$-L\'evy family is the pyramid
associated with some finite extended mm-space
consists of at most $N$ points (see Corollary \ref{thm:N-Levy}),
where `extended' means that
the distance between two points is allowed to be infinity.
By using this statement, we give a simple proof of
\cite{FnSy}*{Theorem 4.4} (see Corollary \ref{cor:shrink}).

As another application of the limit formulas,
we are able to estimate the observable diameter of the $l_p$-product $X^n$
of an mm-space $X$, which together with Theorem \ref{thm:phase}
leads us to the phase transition property of the sequence of
the $l_p$-product $X^n$, $n=1,2,\dots$.
We here assume the discreteness of $X$ in the case of $p > 1$
for the lower estimate of the observable diameter.
This study is published separately as \cite{OzSy:prod}.

This paper is organized as follows.
In \S\ref{sec:prelim}, we describe basic definitions and facts
in metric measure geometry.
In \S\ref{sec:ObsDiam}, we define the observable diameter of a pyramid and
prove the limit formula for observable diameter.
In \S\ref{sec:Sep}, we define the separation distance
of a pyramid and prove the limit formula for separation distance.
In \S\ref{sec:N-Levy}, we study an $N$-L\'evy family
and prove that the limit of an $N$-L\'evy family is realized
by a finite extended mm-space.
In particular, we see that a L\'evy family of pyramids converges
to a one-point mm-space.
In \S\ref{sec:phase},
we study dissipation and prove Theorem \ref{thm:phase}.
Applying Theorem \ref{thm:phase},
we give several examples with the phase transition property.

\begin{ack}
  The authors would like to thank Prof.~Takefumi Kondo
  and Dr.~Yu Kitabeppu for inspiring discussions.
\end{ack}

\section{Preliminaries} \label{sec:prelim}

In this section, we give the definitions and the facts
stated in \cite{Gmv:greenbook}*{\S 3$\frac12$}.
In \cite{Gmv:greenbook}*{\S 3$\frac12$}, many details are omitted.
We refer to \cite{Sy:book} for the details.
The reader is expected to be familiar with basic measure theory
and metric geometry (cf.~\cites{Kch, Bil, Bog, BBI}).

\subsection{mm-Isomorphism and Lipschitz order}

\begin{defn}[mm-Space]
  An \emph{mm-space} is defined to be a triple $(X,d_X,\mu_X)$,
  where $(X,d_X)$ is a complete separable metric space
  and $\mu_X$ a Borel probability measure on $X$.
  We sometimes say that $X$ is an mm-space, in which case
  the metric and the measure of $X$ are respectively indicated by
  $d_X$ and $\mu_X$.
\end{defn}

\begin{defn}[mm-Isomorphism]
  Two mm-spaces $X$ and $Y$ are said to be \emph{mm-isomorphic}
  to each other if there exists an isometry $f : \supp\mu_X \to \supp\mu_Y$
  such that $f_*\mu_X = \mu_Y$,
  where $f_*\mu_X$ is the push-forward of $\mu_X$ by $f$.
  Such an isometry $f$ is called an \emph{mm-isomorphism}.
  Denote by $\cX$ the set of mm-isomorphism classes of mm-spaces.
\end{defn}

Any mm-isomorphism between mm-spaces is automatically surjective,
even if we do not assume it.
Note that $X$ is mm-isomorphic to $(\supp\mu_X,d_X,\mu_X)$.

\emph{We assume that an mm-space $X$ satisfies
\[
X = \supp\mu_X
\]
unless otherwise stated.}

\begin{defn}[Lipschitz order] \label{defn:dom}
  Let $X$ and $Y$ be two mm-spaces.
  We say that $X$ (\emph{Lipschitz}) \emph{dominates} $Y$
  and write $Y \prec X$ if
  there exists a $1$-Lipschitz map $f : X \to Y$ with
  $f_*\mu_X = \mu_Y$.
  We call the relation $\prec$ on $\cX$ the \emph{Lipschitz order}.
\end{defn}

The Lipschitz order $\prec$ is a partial order relation on $\cX$

\subsection{Observable diameter}

The observable diameter is one of the most fundamental invariants
of an mm-space.

\begin{defn}[Partial and observable diameter] \label{defn:ObsDiam}
  Let $X$ be an mm-space.
  For a real number $\alpha$, we define
  the \emph{partial diameter
    $\diam(X;\alpha) = \diam(\mu_X;\alpha)$ of $X$}
  to be the infimum of $\diam(A)$,
  where $A \subset X$ runs over all Borel subsets
  with $\mu_X(A) \ge \alpha$ and $\diam(A)$ denotes the diameter of $A$.
  For a real number $\kappa > 0$, we define
  the \emph{observable diameter of $X$} to be
  \begin{align*}
    \ObsDiam(X;-\kappa) &:= \sup\{\;\diam(f_*\mu_X;1-\kappa) \mid\\
    &\qquad\qquad\text{$f : X \to \R$ is $1$-Lipschitz continuous}\;\}.
  \end{align*}
\end{defn}

The observable diameter is an invariant under mm-isomorphism.
Clearly, $\ObsDiam(X;-\kappa)$ is monotone nonincreasing in $\kappa > 0$.
Note that $\ObsDiam(X;-\kappa) = \diam(X;1-\kappa) = 0$ for $\kappa \ge 1$.

\begin{defn}[L\'evy family]
  A sequence of mm-spaces $X_n$, $n=1,2,\dots$,
  is called a \emph{L\'evy family} if
  \[
  \lim_{n\to\infty} \ObsDiam(X_n;-\kappa) = 0
  \]
  for any $\kappa > 0$.
\end{defn}

For an mm-space $X$ and a real number $t > 0$, we define
$tX$ to be the mm-space $X$ with the scaled metric $d_{tX} := t d_X$.

\begin{prop}
  Let $X$ be an mm-space.
  Then we have
  \[
  \ObsDiam(tX;-\kappa) = t \ObsDiam(X;-\kappa)
  \]
  for any $t,\kappa > 0$.
\end{prop}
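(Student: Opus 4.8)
The plan is to exploit the fact that scaling the metric of $X$ by $t$ corresponds exactly to scaling $1$-Lipschitz functions into $\R$ by $t$. First I would observe that, since $tX$ has the same underlying measure $\mu_X$ as $X$ and only its metric is multiplied by $t$, a function $g : tX \to \R$ is $1$-Lipschitz with respect to $d_{tX} = t\,d_X$ if and only if $g/t$ is $1$-Lipschitz with respect to $d_X$. Consequently $f \mapsto tf$ is a bijection between the set of $1$-Lipschitz functions on $X$ and the set of $1$-Lipschitz functions on $tX$.

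Next I would record how the partial diameter of a pushforward measure on $\R$ behaves under scaling. Let $m_t : \R \to \R$ be multiplication by $t$. For a $1$-Lipschitz $f : X \to \R$ we have $tf = m_t \circ f$, hence $(tf)_*\mu_X = (m_t)_*(f_*\mu_X)$. The key lemma is that for any Borel probability measure $\nu$ on $\R$ and any $\alpha$,
\[
\diam\bigl((m_t)_*\nu;\alpha\bigr) = t\,\diam(\nu;\alpha).
\]
This holds because $m_t$ is a homeomorphism of $\R$ with $(m_t)_*\nu(m_t(A)) = \nu(A)$ and $\diam(m_t(A)) = t\,\diam(A)$ for every Borel set $A$; thus the two families of admissible Borel sets over which the infima defining $\diam(\,\cdot\,;\alpha)$ are taken correspond bijectively, with diameters multiplied by $t$, so the infima are related by the factor $t$.

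Finally I would combine the two observations. Applying the lemma with $\nu = f_*\mu_X$ and $\alpha = 1-\kappa$ gives $\diam((tf)_*\mu_X; 1-\kappa) = t\,\diam(f_*\mu_X; 1-\kappa)$, and taking the supremum over all $1$-Lipschitz $f$ on $X$ — which by the bijection is precisely the supremum over all $1$-Lipschitz $g = tf$ on $tX$ — yields $\ObsDiam(tX;-\kappa) = t\,\ObsDiam(X;-\kappa)$. I do not expect a genuine obstacle here; the only point needing care is the scaling lemma for $\diam(\,\cdot\,;\alpha)$, where one must check that the correspondence of admissible Borel sets is exact in both directions so that the supremum is genuinely preserved rather than merely bounded on one side.
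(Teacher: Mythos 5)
Your proof is correct. The paper states this proposition in its preliminaries without proof (it is quoted as a known fact from Gromov's book and the reference \cite{Sy:book}), so there is no in-paper argument to compare against; your argument --- the bijection $f \mapsto tf$ between $\Lip_1(X)$ and $\Lip_1(tX)$ combined with the exact scaling identity $\diam\bigl((m_t)_*\nu;\alpha\bigr) = t\,\diam(\nu;\alpha)$, justified in both directions via the homeomorphism $m_t$ --- is precisely the standard verification, and you correctly flagged the one point needing care, namely that the correspondence of admissible Borel sets is a bijection so that equality (not just an inequality) of the suprema follows.
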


\begin{prop} \label{prop:ObsDiam-dom}
  If $X \prec Y$, then
  \[
  \ObsDiam(X;-\kappa) \le \ObsDiam(Y;-\kappa)
  \]
  for any $\kappa > 0$.
\end{prop}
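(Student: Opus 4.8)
The plan is to reduce the inequality to the defining supremum for $\ObsDiam(X;-\kappa)$ by pulling each test function on $X$ back to a test function on $Y$ along the dominating map. Since $X \prec Y$, the definition of the Lipschitz order furnishes a $1$-Lipschitz map $f : Y \to X$ with $f_*\mu_Y = \mu_X$. The central observation will be that precomposition with $f$ carries $1$-Lipschitz functions on $X$ to $1$-Lipschitz functions on $Y$ without changing the induced distribution on $\R$.

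Concretely, I would take an arbitrary $1$-Lipschitz function $g : X \to \R$ and set $h := g \circ f : Y \to \R$. As a composition of $1$-Lipschitz maps, $h$ is again $1$-Lipschitz, and since $f$ and $g$ are continuous, $h$ is Borel measurable. The key step is the functoriality of the push-forward: $h_*\mu_Y = (g\circ f)_*\mu_Y = g_*(f_*\mu_Y) = g_*\mu_X$, where the last equality uses $f_*\mu_Y = \mu_X$. Thus the two measures $h_*\mu_Y$ and $g_*\mu_X$ on $\R$ coincide, and in particular their partial diameters agree:
\[
\diam(g_*\mu_X; 1-\kappa) = \diam(h_*\mu_Y; 1-\kappa).
\]

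Because $h$ is an admissible test function in the definition of $\ObsDiam(Y;-\kappa)$, the right-hand side is bounded above by $\ObsDiam(Y;-\kappa)$, so $\diam(g_*\mu_X; 1-\kappa) \le \ObsDiam(Y;-\kappa)$. As $g$ was an arbitrary $1$-Lipschitz function on $X$, taking the supremum over all such $g$ yields $\ObsDiam(X;-\kappa) \le \ObsDiam(Y;-\kappa)$, as desired.

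There is no substantial obstacle here; the entire argument rests on the single identity $(g\circ f)_*\mu_Y = g_*\mu_X$. The only points requiring mild care are that the composition of $1$-Lipschitz maps is $1$-Lipschitz (immediate) and that push-forward is functorial along Borel maps (standard), both of which hold automatically.
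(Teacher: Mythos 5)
Your proof is correct and is exactly the standard argument: the paper states this proposition without proof in its preliminaries (deferring to Gromov's book and \cite{Sy:book}), and the argument given there is the same pullback $g \mapsto g \circ f$ together with the functoriality identity $(g\circ f)_*\mu_Y = g_*\mu_X$, which shows the supremum defining $\ObsDiam(X;-\kappa)$ runs over a subset of the distributions available for $Y$. Nothing is missing.
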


\subsection{Separation distance}

\begin{defn}[Separation distance] \label{defn:Sep}
  Let $X$ be an mm-space.
  For any real numbers $\kappa_0,\kappa_1,\cdots,\kappa_N > 0$
  with $N\geq 1$,
  we define the \emph{separation distance}
  \[
  \Sep(X;\kappa_0,\kappa_1, \cdots, \kappa_N)
  \]
  of $X$ as the supremum of $\min_{i\neq j} d_X(A_i,A_j)$
  over all sequences of $N+1$ Borel subsets $A_0,A_2, \cdots, A_N \subset X$
  satisfying that $\mu_X(A_i) \geq \kappa_i$ for all $i=0,1,\cdots,N$,
  where $d_X(A_i,A_j) := \inf_{x\in A_i,y\in A_j} d_X(x,y)$.
  If there exists no sequence $A_0,\dots,A_N \subset X$
  with $\mu_X(A_i) \ge \kappa_i$, $i=0,1,\cdots,N$, then
  we define
  \[
  \Sep(X;\kappa_0,\kappa_1, \cdots, \kappa_N) := 0.
  \]
\end{defn}

 We see that $\Sep(X;\kappa_0,\kappa_1, \cdots, \kappa_N)$ is
 monotone nonincreasing in $\kappa_i$ for each $i=0,1,\dots,N$.
 The separation distance is an invariant under mm-isomorphism.

\begin{prop}
  Let $X$ be an mm-space.
  Then we have
  \[
  \Sep(tX;\kappa_0,\kappa_1,\dots,\kappa_N)
  = t \Sep(X;\kappa_0,\kappa_1,\dots,\kappa_N)
  \]
  for any $t,\kappa_0,\kappa_1,\dots,\kappa_N > 0$.
\end{prop}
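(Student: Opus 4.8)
The plan is to prove the scaling identity $\Sep(tX;\kappa_0,\dots,\kappa_N) = t\,\Sep(X;\kappa_0,\dots,\kappa_N)$ directly from the definition, by exploiting the fact that scaling the metric by $t$ leaves the underlying set and the measure $\mu_X$ completely unchanged. First I would fix $t>0$ and note that $tX$ and $X$ have the same underlying measure space $(X,\mu_X)$, so that a sequence of $N+1$ Borel subsets $A_0,\dots,A_N \subset X$ satisfies the mass constraints $\mu_X(A_i) \ge \kappa_i$ for $tX$ if and only if it does so for $X$. Hence the two suprema in Definition \ref{defn:Sep} range over \emph{exactly the same} collection of admissible sequences; only the quantity being optimized changes.

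The key computation is the effect of scaling on the set-distances appearing inside the $\min$. Since $d_{tX} = t\,d_X$ pointwise, for any two subsets $A_i,A_j$ we have
\[
d_{tX}(A_i,A_j) = \inf_{x\in A_i,\,y\in A_j} d_{tX}(x,y)
= \inf_{x\in A_i,\,y\in A_j} t\,d_X(x,y)
= t\,d_X(A_i,A_j),
\]
using that the infimum of a nonnegative quantity scaled by the fixed positive constant $t$ is $t$ times the infimum. Taking the minimum over the finitely many pairs $i\neq j$ preserves this factor, so
\[
\min_{i\neq j} d_{tX}(A_i,A_j) = t\,\min_{i\neq j} d_X(A_i,A_j)
\]
for every admissible sequence $A_0,\dots,A_N$.

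Finally I would pass to the supremum over all admissible sequences. Since multiplication by the fixed positive constant $t$ commutes with $\sup$, taking the supremum of both sides of the displayed identity yields the claimed equality. The degenerate case in which no admissible sequence exists is handled automatically: the admissibility condition depends only on $(X,\mu_X)$ and not on the metric, so it holds for $tX$ exactly when it holds for $X$, and in that case both sides equal $0$ by the convention in Definition \ref{defn:Sep}, and $t\cdot 0 = 0$. I do not anticipate any genuine obstacle here; the only point requiring a moment's care is confirming that admissibility is metric-independent, which is precisely what lets the two suprema range over the same index set so that the scalar $t$ can simply be pulled outside.
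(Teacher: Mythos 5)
Your proof is correct: the paper states this proposition in the preliminaries without proof (as a standard fact from Gromov's and Shioya's references), and your direct verification---same admissible families of Borel sets since the measure and the Borel $\sigma$-algebra are unchanged, set-distances scale by $t$, and $\sup$ and $\min$ commute with multiplication by a positive constant---is exactly the argument being taken for granted. Your handling of the degenerate case via the convention $\Sep = 0$ is also the right observation, so there is nothing to add.
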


\begin{prop} \label{prop:Sep-prec}
  Let $X$ and $Y$ be two mm-spaces.
  If $X$ is dominated by $Y$, then we have,
  for any real numbers $\kappa_0,\dots,\kappa_N > 0$,
  \[
  \Sep(X;\kappa_0,\dots,\kappa_N) \le \Sep(Y;\kappa_0,\dots,\kappa_N).
  \]
\end{prop}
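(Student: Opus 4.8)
The plan is to pull back, via the dominating map, any family of subsets of $X$ that nearly realizes the separation distance of $X$, and to check that the pulled-back family witnesses at least as large a separation distance in $Y$. This runs completely parallel to the proof of Proposition \ref{prop:ObsDiam-dom} for the observable diameter.

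Since $X$ is dominated by $Y$, i.e.\ $X \prec Y$, Definition \ref{defn:dom} supplies a $1$-Lipschitz map $f : Y \to X$ with $f_*\mu_Y = \mu_X$. First I would dispose of the degenerate case: if there is no family $A_0,\dots,A_N \subset X$ with $\mu_X(A_i)\ge\kappa_i$ for all $i$, then $\Sep(X;\kappa_0,\dots,\kappa_N)=0$ by definition, and the inequality holds trivially because the right-hand side is nonnegative. Otherwise I would fix an arbitrary family of Borel subsets $A_0,\dots,A_N \subset X$ with $\mu_X(A_i)\ge\kappa_i$ for every $i$, and set $B_i := f^{-1}(A_i)$.

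Each $B_i$ is Borel because $f$, being $1$-Lipschitz, is continuous, and the pushforward condition gives
\[
  \mu_Y(B_i) = \mu_Y(f^{-1}(A_i)) = (f_*\mu_Y)(A_i) = \mu_X(A_i) \ge \kappa_i,
\]
so $(B_i)_{i=0}^N$ is an admissible family for $Y$. The key step is the distance estimate: for $y \in B_i$ and $y' \in B_j$ one has $f(y)\in A_i$ and $f(y')\in A_j$, so the $1$-Lipschitz property yields
\[
  d_X(A_i,A_j) \le d_X(f(y),f(y')) \le d_Y(y,y').
\]
Taking the infimum over $y\in B_i$ and $y'\in B_j$ gives $d_X(A_i,A_j)\le d_Y(B_i,B_j)$ for all $i\neq j$, whence $\min_{i\neq j} d_X(A_i,A_j) \le \min_{i\neq j} d_Y(B_i,B_j) \le \Sep(Y;\kappa_0,\dots,\kappa_N)$.

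Finally I would take the supremum over all admissible families $(A_i)$ in $X$ to obtain the desired inequality. There is no genuine obstacle here; the only point requiring care is the degenerate case with no admissible family in $X$, which is handled by the definitional convention setting $\Sep$ to $0$. Conceptually, the two ingredients doing all the work are that the pushforward condition transports the measure constraints $\mu(A_i)\ge\kappa_i$ upward along $f$, and that the $1$-Lipschitz condition can only increase separations when passing from $X$ to the fibers in $Y$.
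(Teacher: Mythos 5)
Your proof is correct: pulling back an admissible family $(A_i)$ through the $1$-Lipschitz measure-preserving map $f:Y\to X$ and using $d_Y(f^{-1}(A_i),f^{-1}(A_j))\ge d_X(A_i,A_j)$ is exactly the standard argument, and it is the one the paper implicitly relies on (the proposition is stated in the preliminaries as a known fact, with the proof deferred to \cite{Sy:book}). Your handling of the degenerate case where no admissible family exists in $X$ is also the right definitional convention, so nothing is missing.
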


\begin{prop} \label{prop:ObsDiam-Sep}
  For any mm-space $X$ and any real numbers $\kappa$ and $\kappa'$
  with $\kappa > \kappa' > 0$, we have
  \begin{align}
    \tag{1} &\ObsDiam(X;-2\kappa) \le \Sep(X;\kappa,\kappa),\\
    \tag{2} &\Sep(X;\kappa,\kappa) \le \ObsDiam(X;-\kappa').
  \end{align}
\end{prop}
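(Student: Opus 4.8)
The plan is to prove the two inequalities separately, in each case exploiting the duality between $1$-Lipschitz functions to $\R$ (which govern the observable diameter) and pairs of widely separated Borel sets (which govern the separation distance). For (2) I would build a good Lipschitz function out of two separated sets, and for (1) I would read off two separated sets from an arbitrary Lipschitz function.

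For inequality (2), given Borel subsets $A_0, A_1 \subset X$ with $\mu_X(A_i) \ge \kappa$, set $s := d_X(A_0,A_1)$ and take the distance function $f := d_X(\cdot,A_0) : X \to \R$, which is $1$-Lipschitz. Then $f \equiv 0$ on $A_0$ and $f \ge s$ on $A_1$, so the push-forward $\nu := f_*\mu_X$ satisfies $\nu(\{0\}) \ge \kappa$ and $\nu([s,\infty)) \ge \kappa$. For any Borel set $B \subset \R$ with $\diam(B) < s$, the set $B$ cannot meet both $\{0\}$ and $[s,\infty)$; hence $B$ misses one of them and $\nu(\R \setminus B) \ge \kappa > \kappa'$, so $\nu(B) < 1-\kappa'$. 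This forces $\diam(\nu;1-\kappa') \ge s$, whence $\ObsDiam(X;-\kappa') \ge s$. Taking the supremum over all admissible pairs $A_0,A_1$ gives (2); the strict inequality $\kappa > \kappa'$ is used precisely to turn the mass bound $\kappa$ into the needed surplus over $\kappa'$. (When no such pair exists, $\Sep(X;\kappa,\kappa)=0$ and (2) is trivial.)

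For inequality (1), I would run this in reverse. The key observation is that, for the probability measure $\nu := f_*\mu_X$ on $\R$ attached to any $1$-Lipschitz $f : X \to \R$, the partial diameter $\diam(\nu;1-2\kappa)$ is realized by a closed interval. Writing $F$ for the distribution function of $\nu$, I would set $a := \inf\{t : F(t) \ge \kappa\}$ and $b := \inf\{t : F(t) > 1-\kappa\}$. Assuming $2\kappa < 1$ (the case $2\kappa \ge 1$ being trivial since then $\ObsDiam(X;-2\kappa)=0$), one checks $a \le b$, $\nu((-\infty,a)) \le \kappa$, and $\nu((b,\infty)) \le \kappa$, so that $\nu([a,b]) \ge 1-2\kappa$ and therefore $\diam(\nu;1-2\kappa) \le b-a$. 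Pulling back, the sets $A_0 := f^{-1}((-\infty,a])$ and $A_1 := f^{-1}([b,\infty))$ satisfy $\mu_X(A_i) \ge \kappa$, and since $f$ is $1$-Lipschitz, $d_X(A_0,A_1) \ge b-a \ge \diam(\nu;1-2\kappa)$. Hence $\Sep(X;\kappa,\kappa) \ge \diam(f_*\mu_X;1-2\kappa)$, and taking the supremum over $f$ yields (1).

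I expect the main obstacle to be the measure-theoretic bookkeeping around atoms of $\nu$ and the precise quantile estimates: one must verify carefully, using the right-continuity of $F$, that the chosen $a$ and $b$ produce the stated tail bounds and that $a \le b$, rather than tacitly assuming $\nu$ is atomless. Once these quantile inequalities are pinned down, both claims follow by the supremum-over-witnesses argument described above.
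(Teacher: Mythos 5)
Your proof is correct: part (2) via the $1$-Lipschitz function $d_X(\cdot,A_0)$ and part (1) via the quantile points $a,b$ of $f_*\mu_X$ are both carried out with the right tail estimates, and the strict inequality $\kappa>\kappa'$ is used exactly where it is needed. The paper itself states this proposition without proof, as a standard fact from \cite{Gmv:greenbook} and \cite{Sy:book}, and your argument is essentially the same duality argument (Lipschitz functions versus separated sets) given in those references, so there is nothing to add.
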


\subsection{Box distance and observable distance}

For a subset $A$ of a metric space $(X,d_X)$ and for a real number $r > 0$, we set
\[
U_r(A) := \{\;x \in X \mid d_X(x,A) < r\;\},
\]
where $d_X(x,A) := \inf_{a \in A} d_X(x,a)$.

\begin{defn}[Prokhorov distance]
  The \emph{Prokhorov distance $d_P(\mu,\nu)$ between two Borel probability
    measures $\mu$ and $\nu$ on a metric space $X$}
  is defined to be the infimum of $\varepsilon > 0$ satisfying
  \begin{equation} \label{eq:Proh}
    \mu(U_\varepsilon(A)) \ge \nu(A) - \varepsilon
  \end{equation}
  for any Borel subset $A \subset X$.
\end{defn}

The Prokhorov metric is a metrization of weak convergence of
Borel probability measures on $X$ provided that $X$ is a separable
metric space.

\begin{defn}[$\me$]
  Let $(X,\mu)$ be a measure space and $Y$ a metric space.
  For two $\mu$-measurable maps $f,g : X \to Y$, we define $\me_\mu(f,g)$
  to be the infimum of $\varepsilon \ge 0$ satisfying
  \begin{align} \label{eq:me}
    \mu(\{\;x \in X \mid d_Y(f(x),g(x)) > \varepsilon\;\}) \le \varepsilon.
  \end{align}
  We sometimes write $\me(f,g)$ by omitting $\mu$.
\end{defn}

$\me_\mu$ is a metric on the set of $\mu$-measurable maps from $X$ to $Y$
by identifying two maps if they are equal $\mu$-a.e.

\begin{lem} \label{lem:dP-me}
  Let $X$ be a topological space with a Borel probability measure $\mu$
  and $Y$ a metric space.
  For any two $\mu$-measurable maps $f,g : X \to Y$, we have
  \[
  d_P(f_*\mu,g_*\mu) \le \me_\mu(f,g).
  \]
\end{lem}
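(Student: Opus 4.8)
The plan is to show, for every $\varepsilon \ge 0$ that witnesses the defining inequality of $\me_\mu(f,g)$, that $d_P(f_*\mu,g_*\mu) \le \varepsilon$; taking the infimum over all such $\varepsilon$ then yields the claim. So I would fix $\varepsilon \ge 0$ with
\[
\mu(E) \le \varepsilon, \qquad E := \{\, x \in X \mid d_Y(f(x),g(x)) > \varepsilon \,\},
\]
and first note that $E$ is measurable because the map $x \mapsto d_Y(f(x),g(x))$ is measurable (as $f,g$ are measurable and $d_Y$ is continuous), so all the measures below are well defined.

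The key step is a single set inclusion. The natural attempt is to compare $g^{-1}(A)$ with $f^{-1}(U_\varepsilon(A))$ off the exceptional set $E$: if $x \in g^{-1}(A) \setminus E$, then $g(x) \in A$ and $d_Y(f(x),g(x)) \le \varepsilon$, whence $d_Y(f(x),A) \le \varepsilon$. The subtlety — and the one point that genuinely needs care — is that $U_\varepsilon(A)$ is defined by the \emph{strict} inequality $d_Y(\cdot,A) < \varepsilon$, so $d_Y(f(x),A) \le \varepsilon$ does not by itself place $f(x)$ in $U_\varepsilon(A)$. I would circumvent this by enlarging the radius: for any $\eta > \varepsilon$ we get $d_Y(f(x),A) \le \varepsilon < \eta$, hence the inclusion $g^{-1}(A) \setminus E \subset f^{-1}(U_\eta(A))$.

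Given this inclusion the estimate is immediate: for every Borel $A \subset Y$ and every $\eta > \varepsilon$,
\[
f_*\mu(U_\eta(A)) = \mu(f^{-1}(U_\eta(A))) \ge \mu(g^{-1}(A)) - \mu(E) \ge g_*\mu(A) - \varepsilon \ge g_*\mu(A) - \eta.
\]
Since this holds for all Borel $A$, the number $\eta$ satisfies the defining inequality \eqref{eq:Proh} for the Prokhorov distance between $f_*\mu$ and $g_*\mu$, so $d_P(f_*\mu,g_*\mu) \le \eta$. Letting $\eta \to \varepsilon+$ gives $d_P(f_*\mu,g_*\mu) \le \varepsilon$, and taking the infimum over all admissible $\varepsilon$ completes the proof.

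The argument is entirely elementary; the only real obstacle is the strict/non-strict mismatch between $U_\varepsilon(A)$ and the bound $d_Y(f(x),A)\le\varepsilon$, which is resolved by the harmless passage to a slightly larger radius $\eta$.
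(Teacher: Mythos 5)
Your proof is correct. The paper itself states this lemma without proof, as a standard preliminary fact quoted from the references, and your argument is the standard one: compare $g^{-1}(A)$ with $f^{-1}(U_\eta(A))$ off the exceptional set $E$, handle the strict inequality in the definition of $U_\varepsilon(A)$ by passing to a radius $\eta > \varepsilon$, and then let $\eta \to \varepsilon+$ and take the infimum over admissible $\varepsilon$.
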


\begin{defn}[Parameter]
  Let $I := [\,0,1\,)$ and let $X$ be an mm-space.
  A map $\varphi : I \to X$ is called a \emph{parameter of $X$}
  if $\varphi$ is a Borel measurable map such that
  \[
  \varphi_*\cL^1 = \mu_X,
  \]
  where $\cL^1$ denotes the one-dimensional Lebesgue measure on $I$.
\end{defn}

Any mm-space has a parameter.

\begin{defn}[Box distance]
  We define the \emph{box distance $\square(X,Y)$ between
    two mm-spaces $X$ and $Y$} to be
  the infimum of $\varepsilon \ge 0$
  satisfying that there exist parameters
  $\varphi : I \to X$, $\psi : I \to Y$, and
  a Borel subset $I_0 \subset I$ such that
  \begin{align}
    & |\,\varphi^*d_X(s,t)-\psi^*d_Y(s,t)\,| \le \varepsilon
    \quad\text{for any $s,t \in I_0$};\tag{1}\\
    & \cL^1(I_0) \ge 1-\varepsilon,\tag{2}
  \end{align}
  where
  $\varphi^*d_X(s,t) := d_X(\varphi(s),\varphi(t))$ for $s,t \in I$.
\end{defn}

The box distance function $\square$ is a complete separable metric on $\cX$.

\begin{lem} \label{lem:box-dP}
  Let $X$ be a complete separable metric space.
  For any two Borel probability measures $\mu$ and $\nu$ on $X$,
  we have
  \[
  \square((X,\mu),(X,\nu)) \le 2 \, d_P(\mu,\nu).
  \]
\end{lem}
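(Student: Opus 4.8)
The plan is to realize the Prokhorov bound by a coupling and then convert that coupling into a matched pair of parameters. First I would invoke Strassen's theorem (cf.~\cites{Bil,Bog}): since $X$ is separable, for every $\varepsilon > d_P(\mu,\nu)$ there is a Borel probability measure $\pi$ on the product space $X \times X$ whose two marginals are $\mu$ and $\nu$ and which satisfies
\[
\pi(\{\,(x,y) \in X \times X \mid d_X(x,y) > \varepsilon\,\}) \le \varepsilon.
\]
The product $X \times X$, equipped for instance with the metric $d_X(x,x') + d_X(y,y')$, is again complete and separable, so $(X \times X, \pi)$ is an mm-space and therefore admits a parameter $\Phi : I \to X \times X$ with $\Phi_*\cL^1 = \pi$.

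Next I would split $\Phi$ into its two coordinate maps, $\varphi := p_1 \circ \Phi$ and $\psi := p_2 \circ \Phi$, where $p_1, p_2 : X \times X \to X$ are the projections. These are Borel measurable, and since $(p_1)_*\pi = \mu$ and $(p_2)_*\pi = \nu$ we get $\varphi_*\cL^1 = \mu$ and $\psi_*\cL^1 = \nu$; thus $\varphi$ is a parameter of $(X,\mu)$ and $\psi$ a parameter of $(X,\nu)$. Setting
\[
I_0 := \{\,s \in I \mid d_X(\varphi(s),\psi(s)) \le \varepsilon\,\},
\]
the mass estimate above reads $\cL^1(I \setminus I_0) = \pi(\{d_X > \varepsilon\}) \le \varepsilon$, so $\cL^1(I_0) \ge 1 - \varepsilon \ge 1 - 2\varepsilon$. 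This is condition (2) in the definition of the box distance with tolerance $2\varepsilon$.

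For condition (1), I would use the triangle inequality twice: for any $s,t \in I_0$,
\[
|\,\varphi^*d_X(s,t) - \psi^*d_X(s,t)\,|
\le d_X(\varphi(s),\psi(s)) + d_X(\varphi(t),\psi(t)) \le 2\varepsilon,
\]
using that here $Y = X$ so $d_Y = d_X$. Hence the triple $\varphi,\psi,I_0$ witnesses $\square((X,\mu),(X,\nu)) \le 2\varepsilon$, and letting $\varepsilon \downarrow d_P(\mu,\nu)$ yields the claim.

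The main obstacle is the very first step: translating the purely set-function inequality \eqref{eq:Proh} defining $d_P$ into a genuine coupling $\pi$ on $X \times X$ carrying the near-diagonal mass bound. This is precisely the content of Strassen's theorem, whose applicability rests on the separability of $X$. Once the coupling is available, everything else is a routine parametrization of $\pi$ together with two elementary applications of the triangle inequality, so no further difficulty is expected.
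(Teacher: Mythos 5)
Your proof is correct and follows essentially the same route as the standard one: the paper states Lemma \ref{lem:box-dP} without proof (deferring to \cite{Sy:book}), where the argument is exactly your Strassen-coupling scheme --- produce a coupling $\pi$ of $\mu$ and $\nu$ with $\pi(\{d_X > \varepsilon\}) \le \varepsilon$, take a parameter of $(X\times X,\pi)$, project to obtain parameters $\varphi$, $\psi$ of $(X,\mu)$ and $(X,\nu)$, and apply the four-point triangle inequality on $I_0$. All the supporting steps you use (Strassen's theorem on the separable space $X$, existence of a parameter for the Polish space $X \times X$ with the Borel measure $\pi$, and Borel measurability of $I_0$) are valid, so the witness gives $\square((X,\mu),(X,\nu)) \le 2\varepsilon$ for every $\varepsilon > d_P(\mu,\nu)$, as required.
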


\begin{defn}[Observable distance $\dconc(X,Y)$] \label{defn:obs-dist}
  Denote by $\Lip_1(X)$ the set of $1$-Lipschitz continuous
  functions on an mm-space $X$.
  For any parameter $\varphi$ of $X$, we set
  \[
  \varphi^*\Lip_1(X)
  := \{\;f\circ\varphi \mid f \in \Lip_1(X)\;\}.
  \]
  We define the \emph{observable distance $\dconc(X,Y)$ between
    two mm-spaces $X$ and $Y$} by
  \[
  \dconc(X,Y) := \inf_{\varphi,\psi} d_H(\varphi^*\Lip_1(X),\psi^*\Lip_1(Y)),
  \]
  where $\varphi : I \to X$ and $\psi : I \to Y$ run over all parameters
  of $X$ and $Y$, respectively,
  and where $d_H$
  is the Hausdorff distance with respect to the metric $\me_{\cL^1}$.
  We say that a sequence of mm-spaces $X_n$, $n=1,2,\dots$,
  \emph{concentrates} to an mm-space $X$ if $X_n$ $\dconc$-converges to $X$
  as $n\to\infty$.
\end{defn}

\begin{prop}
  Let $\{X_n\}_{n=1}^\infty$ be a sequence of mm-spaces.
  Then, $\{X_n\}$ is a L\'evy family if and only if $X_n$ concentrates to
  a one-point mm-space as $n\to\infty$.
\end{prop}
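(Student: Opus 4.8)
The plan is to compute the observable distance $\dconc(X_n,*)$ to the one-point mm-space $*$ explicitly and then compare it directly with the observable diameter. First I would unwind Definition \ref{defn:obs-dist} for the target $*$. The space $*$ carries only the constant parameter $\psi:I\to *$, and for every $g\in\Lip_1(*)$ the pullback $g\circ\psi$ is a constant function on $I$; hence $\psi^*\Lip_1(*)$ is exactly the set of all constant functions $I\to\R$. On the other hand, constant functions are $1$-Lipschitz on $X_n$, so for every parameter $\varphi$ of $X_n$ the set $\varphi^*\Lip_1(X_n)$ already contains all constant functions, i.e.\ $\psi^*\Lip_1(*)\subseteq\varphi^*\Lip_1(X_n)$. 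Therefore one of the two one-sided Hausdorff distances vanishes and
\[
 d_H\bigl(\varphi^*\Lip_1(X_n),\psi^*\Lip_1(*)\bigr)
 =\sup_{f\in\Lip_1(X_n)}\ \inf_{c\in\R}\ \me_{\cL^1}(f\circ\varphi,c).
\]

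Next I would observe that the inner quantity does not depend on the parameter $\varphi$. Since $\varphi_*\cL^1=\mu_{X_n}$, for every $\varepsilon>0$ we have $\cL^1(\{s:|f(\varphi(s))-c|>\varepsilon\})=\mu_{X_n}(\{x:|f(x)-c|>\varepsilon\})$, so $\me_{\cL^1}(f\circ\varphi,c)$ equals $\me_{\mu_{X_n}}(f,c):=\inf\{\varepsilon:\mu_{X_n}(\{|f-c|>\varepsilon\})\le\varepsilon\}$, which is independent of $\varphi$. Taking the infimum over $\varphi$ thus changes nothing, and I obtain the key formula
\[
 \dconc(X_n,*)=\sup_{f\in\Lip_1(X_n)}\ \inf_{c\in\R}\ \me_{\mu_{X_n}}(f,c)=:\delta_n.
\]

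The remaining work is two elementary estimates comparing $\delta_n$ with $\ObsDiam(X_n;-\kappa)$, both applied to the push-forward $\nu:=f_*\mu_{X_n}$ on $\R$. For "concentration $\Rightarrow$ L\'evy family", fix $\kappa>0$; since $\delta_n\to0$ one has $\delta_n<\kappa$ for large $n$, and setting $\varepsilon_n:=\delta_n+1/n\to0$ (so $\varepsilon_n\le\kappa$ eventually), each $f$ admits $c$ with $\me_{\mu_{X_n}}(f,c)<\varepsilon_n$, hence $\nu([c-\varepsilon_n,c+\varepsilon_n])\ge1-\varepsilon_n\ge1-\kappa$ and $\diam(\nu;1-\kappa)\le2\varepsilon_n$; taking the supremum over $f$ gives $\ObsDiam(X_n;-\kappa)\le2\varepsilon_n\to0$. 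Conversely, for "L\'evy family $\Rightarrow$ concentration", fix $\eta>0$; for large $n$ one has $\ObsDiam(X_n;-\eta)<\eta$, so every $\nu$ has a Borel set $A$ with $\nu(A)\ge1-\eta$ and $\diam A<\eta$, and taking $c$ to be the midpoint of $A$ yields $\mu_{X_n}(\{|f-c|>\eta\})\le\nu(\R\setminus A)\le\eta$, whence $\inf_c\me_{\mu_{X_n}}(f,c)\le\eta$ and, after the supremum over $f$, $\delta_n\le\eta$. Letting $\eta\to0$ shows $\delta_n\to0$.

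I expect the main obstacle to be the first part, namely correctly unwinding $\dconc$ for the target $*$: the two observations that constant functions already lie in $\varphi^*\Lip_1(X_n)$ (so the Hausdorff distance collapses to a single supremum) and that this supremum is parameter-free (so the outer infimum is vacuous) are what reduce the problem to the identity $\dconc(X_n,*)=\delta_n$. Once this is in place, the equivalence is a routine translation between the $\me$-closeness of a $1$-Lipschitz function to a constant and the partial diameter of its distribution, as carried out above.
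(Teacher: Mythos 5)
Your proof is correct. The paper itself does not prove this proposition --- it is quoted as a known fact from Gromov's book and from \cite{Sy:book} --- but your argument is complete and is essentially the standard one: you correctly identify $\psi^*\Lip_1(*)$ with the constant functions, observe that these already lie in every $\varphi^*\Lip_1(X_n)$ so the Hausdorff distance collapses to the one-sided supremum $\sup_f\inf_c\me(f\circ\varphi,c)$, note that $\varphi_*\cL^1=\mu_{X_n}$ makes this quantity parameter-free, and then the two elementary estimates (an $\me$-ball of radius $\varepsilon$ around a constant forces $\diam(f_*\mu_{X_n};1-\kappa)\le 2\varepsilon$ once $\varepsilon\le\kappa$, and conversely a set of measure $\ge 1-\eta$ with diameter $<\eta$ yields a constant within $\me$-distance $\eta$ via its midpoint) give both implications cleanly.
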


\begin{prop} \label{prop:dconc-box}
  For any two mm-spaces $X$ and $Y$ we have
  \[
  \dconc(X,Y) \le \square(X,Y).
  \]
\end{prop}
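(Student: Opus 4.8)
The plan is to fix parameters that nearly realize the box distance and to show that, for these same parameters, the Hausdorff distance appearing in the definition of $\dconc$ is controlled. Concretely, let $\varepsilon > \square(X,Y)$; I may assume $\varepsilon < 1$, since $\me_{\cL^1}$, and hence $\dconc$, is always bounded by $1$ (the defining inequality for $\me$ holds trivially at $\varepsilon=1$ because $\cL^1$ is a probability measure). By definition of the box distance I can choose parameters $\varphi : I \to X$ and $\psi : I \to Y$ together with a Borel set $I_0 \subset I$ satisfying $\cL^1(I_0) \ge 1-\varepsilon$ and $|\varphi^*d_X(s,t) - \psi^*d_Y(s,t)| \le \varepsilon$ for all $s,t \in I_0$. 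It then suffices to prove that the Hausdorff distance $d_H(\varphi^*\Lip_1(X),\psi^*\Lip_1(Y))$ (with respect to $\me_{\cL^1}$) is at most $\varepsilon$ and to let $\varepsilon \to \square(X,Y)+$.

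By the symmetry of the box condition in $X$ and $Y$, it is enough to show that for every $f \in \Lip_1(X)$ there is a $g \in \Lip_1(Y)$ with $\me_{\cL^1}(f\circ\varphi,\, g\circ\psi) \le \varepsilon$. The key construction is a McShane-type infimal convolution: I would set
\[
g(y) := \inf_{s \in I_0}\bigl(f(\varphi(s)) + d_Y(y,\psi(s))\bigr), \qquad y \in Y.
\]
As an infimum of the $1$-Lipschitz functions $y \mapsto f(\varphi(s)) + d_Y(y,\psi(s))$, the map $g$ is $1$-Lipschitz provided it is finite. Finiteness I would check by fixing one $s_0 \in I_0$ (which exists since $\cL^1(I_0) > 0$) and bounding $f(\varphi(s))$ from below through the box condition and the triangle inequality, which gives $g(y) \ge f(\varphi(s_0)) - \varepsilon - d_Y(y,\psi(s_0)) > -\infty$; note $g$ is continuous, hence $g\circ\psi$ is Borel measurable.

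The heart of the estimate is the comparison of $g\circ\psi$ with $f\circ\varphi$ on $I_0$. For $t \in I_0$, taking $s=t$ in the infimum immediately yields $g(\psi(t)) \le f(\varphi(t))$. For the reverse inequality I would use, for every $s \in I_0$,
\[
|f(\varphi(s)) - f(\varphi(t))| \le \varphi^*d_X(s,t) \le \psi^*d_Y(s,t) + \varepsilon = d_Y(\psi(s),\psi(t)) + \varepsilon,
\]
the first step being $1$-Lipschitzness of $f$ and the second the box condition, whence $f(\varphi(s)) + d_Y(\psi(t),\psi(s)) \ge f(\varphi(t)) - \varepsilon$; taking the infimum over $s \in I_0$ gives $g(\psi(t)) \ge f(\varphi(t)) - \varepsilon$. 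Thus $|f(\varphi(t)) - g(\psi(t))| \le \varepsilon$ for all $t \in I_0$, so the set $\{\,s \in I : |f(\varphi(s)) - g(\psi(s))| > \varepsilon\,\}$ is contained in $I \setminus I_0$ and has $\cL^1$-measure at most $\varepsilon$, giving $\me_{\cL^1}(f\circ\varphi,\, g\circ\psi) \le \varepsilon$ directly from the definition of $\me$.

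I expect the main obstacle to be exactly this construction of $g$: one must upgrade the merely \emph{approximately} $1$-Lipschitz assignment $\psi(s) \mapsto f(\varphi(s))$ (which obeys the Lipschitz bound only up to the additive error $\varepsilon$ forced by the box condition) into a genuine $1$-Lipschitz function on all of $Y$, while keeping the discrepancy on $I_0$ within $\varepsilon$. The infimal-convolution formula resolves this, and the only delicate point is verifying finiteness, since an mm-space need not be bounded. Granting this, the two symmetric one-sided bounds yield $d_H(\varphi^*\Lip_1(X),\psi^*\Lip_1(Y)) \le \varepsilon$, and letting $\varepsilon$ decrease to $\square(X,Y)$ gives $\dconc(X,Y) \le \square(X,Y)$.
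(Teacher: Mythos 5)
Your proof is correct: the paper states this proposition without proof (it is a preliminary fact cited from Gromov's book and \cite{Sy:book}), and your argument --- fixing near-optimal parameters for the box distance and upgrading the approximately $1$-Lipschitz assignment $\psi(s)\mapsto f(\varphi(s))$ to a genuine $1$-Lipschitz function on $Y$ via the McShane-type infimal convolution $g(y)=\inf_{s\in I_0}\bigl(f(\varphi(s))+d_Y(y,\psi(s))\bigr)$ --- is exactly the standard proof given in the cited reference. All the delicate points (finiteness of $g$, the two-sided bound $|f\circ\varphi-g\circ\psi|\le\varepsilon$ on $I_0$, and the reduction of $\me_{\cL^1}\le\varepsilon$ to $\cL^1(I\setminus I_0)\le\varepsilon$) are handled correctly.
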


\subsection{Pyramid}

\begin{defn}[Pyramid] \label{defn:pyramid}
  A subset $\cP \subset \cX$ is called a \emph{pyramid}
  if it satisfies the following (1), (2), and (3).
  \begin{enumerate}
  \item If $X \in \cP$ and if $Y \prec X$, then $Y \in \cP$.
  \item For any two mm-spaces $X, X' \in \cP$,
    there exists an mm-space $Y \in \cP$ such that
    $X \prec Y$ and $X' \prec Y$.
  \item $\cP$ is nonempty and $\square$-closed.
  \end{enumerate}
  We denote the set of pyramids by $\Pi$.

  For an mm-space $X$ we define
  \[
  \cP_X := \{\;X' \in \cX \mid X' \prec X\;\}.
  \]
  We call $\cP_X$ the \emph{pyramid associated with $X$}.
\end{defn}

It is trivial that $\cX$ is a pyramid.
Let $*$ denotes a one-point mm-space, i.e.,
an mm-space consists of a single point.
Then we see $\cP_* = \{*\}$.

In Gromov's book \cite{Gmv:greenbook}, the definition of a pyramid
is only by (1) and (2) of Definition \ref{defn:pyramid}.
We here put (3) as an additional condition for the Hausdorff property
of $\Pi$.

\begin{defn}
  We say that a sequence of mm-spaces $X_n$, $n=1,2,\dots$,
  \emph{approximates a pyramid $\cP$}
  if we have
  \[
  X_1 \prec X_2 \prec \dots \prec X_n \prec \cdots
  \quad\text{and}\quad
  \overline{\bigcup_{n=1}^\infty X_n}^\square = \cP,
  \]
  where the bar with $\square$ means the closure with respect to $\square$.
\end{defn}

\begin{defn}[Weak convergence] \label{defn:w-conv}
  Let $\cP_n, \cP \in \Pi$, $n=1,2,\dots$.
  We say that \emph{$\cP_n$ converges weakly to $\cP$}
  as $n\to\infty$
  if the following (1) and (2) are both satisfied.
  \begin{enumerate}
  \item For any mm-space $X \in \cP$, we have
    \[
    \lim_{n\to\infty} \square(X,\cP_n) = 0.
    \]
  \item For any mm-space $X \in \cX \setminus \cP$, we have
    \[
    \liminf_{n\to\infty} \square(X,\cP_n) > 0.
    \]
  \end{enumerate}
\end{defn}

\begin{thm}[Gromov, Shioya \cites{Gmv:greenbook, Sy:book, Sy:mmlim}]
  There exists a metric $\rho$ on $\Pi$
  satisfying the following {\rm(1)}--{\rm(4)}.
  \begin{enumerate}
  \item The metric $\rho$ is a metrization of weak convergence.
  \item The metric space $(\Pi,\rho)$ is compact.
  \item The map $\cX \ni X \mapsto \cP_X \in \Pi$ is a topological embedding
    with respect to $\dconc$ and $\rho$, and its image is dense in $\Pi$.
    In particular, $\Pi$ is a compactification of $(\cX,\dconc)$.
  \item For any two mm-spaces $X$ and $Y$, we have
    \[
    \rho(\cP_X,\cP_Y) \le \dconc(X,Y).
    \]
  \end{enumerate}
\end{thm}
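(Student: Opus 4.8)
The plan is to realize each pyramid as a Lipschitz ``distance function'' on $\cX$ and to read off $\rho$ from the values of these functions on a countable dense set. Since $(\cX,\square)$ is separable and $\dconc\le\square$ (Proposition \ref{prop:dconc-box}), I fix a sequence $\{Z_k\}_{k=1}^\infty$ dense in $(\cX,\dconc)$. For a pyramid $\cP$ I set $f_\cP(X):=\inf_{Y\in\cP}\dconc(X,Y)$, which is $1$-Lipschitz for $\dconc$ and, since every pyramid contains the one-point space $*$ (it is nonempty and downward closed by Definition \ref{defn:pyramid}, and $*\prec Y$ for all $Y$), is bounded on each fixed argument by $\dconc(X,*)$. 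I then define $\rho(\cP,\cP'):=\sum_{k}2^{-k}\min\{1,|f_\cP(Z_k)-f_{\cP'}(Z_k)|\}$. Symmetry and the triangle inequality are immediate; the only nontrivial metric axiom is that $\rho$ separates points, i.e.\ that a pyramid is recovered from the numbers $f_\cP(Z_k)$.

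Separation reduces, by density and equi-Lipschitzness, to showing that $f_\cP=f_{\cP'}$ forces $\cP=\cP'$, equivalently that $\{f_\cP=0\}=\cP$, i.e.\ that a pyramid is $\dconc$-closed. I would prove this from the three axioms: downward closedness and directedness allow a prospective $\dconc$-limit point to be approached by a $\prec$-monotone sequence inside $\cP$, and a monotone-convergence argument (a $\prec$-increasing, $\dconc$-convergent sequence is $\square$-convergent) combined with $\square$-closedness (axiom (3)) places the limit in $\cP$; this is exactly the Hausdorff-property role the text attributes to axiom (3). Granting separation, the easy half of property (1) is direct: if $\rho(\cP_n,\cP)\to0$ then $f_{\cP_n}(Z_k)\to f_\cP(Z_k)$ for every $k$, so by equi-$1$-Lipschitzness $f_{\cP_n}\to f_\cP$ pointwise on all of $\cX$; for $X\in\cP$ this is condition (1) of Definition \ref{defn:w-conv}, while for $X\notin\cP$ it gives $\liminf_n\square(X,\cP_n)\ge\liminf_n f_{\cP_n}(X)=f_\cP(X)>0$, which is condition (2).

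The heart of the matter is compactness. Given any $\{\cP_n\}$, a diagonal argument over $\{Z_k\}$ extracts a subsequence along which $f_{\cP_n}(Z_k)$ converges for every $k$, and equi-$1$-Lipschitzness upgrades this to pointwise convergence $f_{\cP_n}\to f$ on all of $\cX$ with $f$ a $1$-Lipschitz function. Setting $\cP:=\{f=0\}$, the task---and the main obstacle of the whole theorem---is to show that $\cP$ is a pyramid and that $f=f_\cP$, whence $\rho(\cP_n,\cP)\to0$. Downward closedness of $\cP$ follows from that of each $\cP_n$ via a lifting property of $\prec$ under $\square$-convergence that I would establish separately, and $\square$-closedness is automatic. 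The delicate points are directedness of $\cP$ and the reverse inequality $f\ge f_\cP$ (only $f\le f_\cP$ is forced by Lipschitzness): both can fail for a naive decreasing-distance limit of closed sets because bounded balls in $(\cX,\dconc)$ are wildly non-precompact---this non-properness being the analytic shadow of concentration---so one cannot merely pass to limits of nearest points or of common $\prec$-upper bounds. Here I expect to exploit the directedness of the $\cP_n$ together with the monotone behaviour of $\prec$ to manufacture the required limiting members of $\cP$. This compactness statement is property (2).

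With compactness and separation in hand, full metrization (property (1)) follows softly: weak limits are unique, since if $\cP_n$ converged weakly to both $\cP$ and $\cP'$, each $X\in\cP$ would satisfy condition (1) for $\cP$ and hence could not satisfy condition (2) for $\cP'$, forcing $X\in\cP'$ and, symmetrically, $\cP=\cP'$; thus, given $\cP_n\to\cP$ weakly, every subsequence has a $\rho$-convergent sub-subsequence whose limit must be $\cP$ by the easy half above and uniqueness, so $\rho(\cP_n,\cP)\to0$. For property (3), the map $X\mapsto\cP_X$ is injective because $X$ is the $\prec$-maximum of $\cP_X$, its image is dense because any pyramid is the weak limit of the increasing principal pyramids $\cP_{Y_m}$ of an approximating sequence $Y_1\prec Y_2\prec\cdots$, and it is a homeomorphism onto its image once (4) and a matching lower bound are in place. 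Finally, property (4) reduces to the summandwise estimate $|f_{\cP_X}(Z_k)-f_{\cP_Y}(Z_k)|\le\dconc(X,Y)$---that the $\dconc$-Hausdorff distance between the down-sets $\cP_X$ and $\cP_Y$ is at most $\dconc(X,Y)$---and since $\sum_k2^{-k}=1$ this yields $\rho(\cP_X,\cP_Y)\le\dconc(X,Y)$, which is precisely why the distance functions must be measured with $\dconc$ rather than the coarser $\square$.
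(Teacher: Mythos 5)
Your proposal takes a genuinely different route from the one the paper relies on (the theorem is quoted from Gromov and Shioya, and both the cited proofs and the paper's own variant construction, the metric $\rho_R$ of Theorem \ref{thm:rhoR}, work through the measurements $\cM(\cP;N,R)$), but it has gaps at exactly the hard points. The first is a metric mismatch: weak convergence (Definition \ref{defn:w-conv}) is formulated with the box distance, while your distance functions $f_\cP$ use $\dconc$. Pointwise convergence $f_{\cP_n}\to f_\cP$ gives, for $X\in\cP$, only $\dconc$-approximation of $X$ by members of $\cP_n$; condition (1) of Definition \ref{defn:w-conv} demands $\square(X,\cP_n)\to 0$, and by Proposition \ref{prop:dconc-box} the inequality $\dconc\le\square$ runs the wrong way, so your ``easy half'' of metrization is not condition (1) at all. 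That $\dconc$-approximation by members of pyramids nevertheless upgrades to $\square$-approximation is a deep fact --- it is precisely what the measurement machinery (Lemma \ref{lem:rhoR}, i.e.\ Lemmas 6.18 and 7.23 of Shioya's book, together with Lemma \ref{lem:cM-dconc}) delivers --- and your sketch of it (``a $\prec$-increasing, $\dconc$-convergent sequence is $\square$-convergent'') is itself an unproven and nonobvious assertion, on which your separation axiom ($\dconc$-closedness of pyramids) also rests.

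The second gap is the one you name yourself: compactness, which is the heart of the theorem, is deferred to an expectation (``I expect to exploit the directedness\dots''). In the actual proofs compactness comes from a finite-dimensional reduction that your setup has no substitute for: each $\cM(\cP;N,R)$ is a compact subset of the compact space $\cM(N,R)$, the closed subsets of a compact metric space form a compact space under the Hausdorff metric (Blaschke selection), and a diagonal argument plus consistency across $N$ and $R$ shows the limiting families are again measurements of a pyramid. Pointwise (Wijsman-type) limits of $\dconc$-distance functions on the non-locally-compact space $(\cX,\dconc)$ give no mechanism for manufacturing members of the limit set, so $f\ge f_{\{f=0\}}$ and directedness of $\{f=0\}$ remain unproven. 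Finally, property (4) in your scheme rests on the dimension-free bound that the $\dconc$-Hausdorff distance between $\cP_X$ and $\cP_Y$ is at most $\dconc(X,Y)$; this is asserted, not proved, and the estimate actually available (Lemma \ref{lem:cM-dconc}) degrades by a factor $N$ per dimension --- which is exactly why the paper's $\rho_R$ carries the weights $\frac{1}{N\,2^{N+1}}$. As it stands, the proposal is a program whose unproven steps are equivalent in difficulty to the theorem itself.
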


\subsection{Measurement}

\begin{defn}[$\cM(N)$, $\cM(N,R)$, $\cX(N,R)$]
  Let $N$ be a natural number and $R$ a nonnegative real number.
  Denote by $\cM(N)$ the set of Borel probability measures on $\R^N$
  equipped with the Prokhorov metric $d_P$, and set
  \[
  \cM(N,R) := \{\;\mu \in \cM(N) \mid \supp\mu \subset B^N_R\;\},
  \]
  where $B^N_R := \{\;x \in \R^N \mid \|x\|_\infty \le R\;\}$
  and $\|\cdot\|_\infty$ denotes the $l_\infty$-norm on $\R^N$.
  We define
  \[
  \cX(N,R) := \{\;(B^N_R,\|\cdot\|_\infty,\mu) \mid \mu \in \cM(N,R)\;\}.
  \]
\end{defn}

Note that $\cM(N,R)$ and $\cX(N,R)$ are compact
with respect to $d_P$ and $\square$, respectively.

\begin{defn}[$N$-Measurement]
  Let $\cP$ be a pyramid, $N$ a natural number,
  and $R$ a nonnegative real number.
  We define
  \begin{align*}
    \cM(\cP;N) &:= \{\;\mu \in \cM(N) \mid (\R^N,\|\cdot\|_\infty,\mu) \in \cP\;\},\\
    \cM(\cP;N,R) &:= \cM(\cP;N) \cap \cM(N,R).
  \end{align*}
  We call $\cM(\cP;N)$ (resp.~$\cM(\cP;N,R)$)
  the \emph{$N$-measurement} (resp.~\emph{$(N,R)$-measurement}) \emph{of}
  $\cP$.  For an mm-space $X$, we define
  $\cM(X;N) := \cM(\cP_X;N)$ and $\cM(X;N,R) := \cM(\cP_X;N,R)$.
\end{defn}

The $N$-measurement $\cM(\cP;N)$ is a closed subset of $\cM(N)$
and the $(N,R)$-measurement $\cM(\cP;N,R)$ is a compact subset of $\cM(N)$.
We see
\[
\cM(X;N) := \{\;\Phi_*\mu_X \mid \Phi : X \to (\R^N,\|\cdot\|_\infty)
\ \text{is $1$-Lipschitz}\;\}.
\]

\begin{lem}[\cite{Sy:mmlim}*{Lemma 3.6}, \cite{Sy:book}*{Lemma 5.15}]
  \label{lem:cM-dconc}
  Let $X$ and $Y$ be two mm-spaces.
  For any natural number $N$ we have
  \[
  d_H(\cM(X;N),\cM(Y;N)) \le N \dconc(X,Y),
  \]
  where $d_H$ is the Hausdorff distance with respect to
  the Prohorov metric $d_P$.
\end{lem}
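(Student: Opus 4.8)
The plan is to prove the two Hausdorff inclusions by showing that every measure in $\cM(X;N)$ has a partner in $\cM(Y;N)$ within Prokhorov distance $N\dconc(X,Y)$, up to an arbitrarily small error, and conversely by symmetry. The conceptual bridge I would use is that any element of $\cM(X;N)$ is a pushforward $\Phi_*\mu_X$ of a $1$-Lipschitz map $\Phi:X\to(\R^N,\|\cdot\|_\infty)$, and that such a $\Phi$ decomposes into $N$ real-valued $1$-Lipschitz coordinates; after composing with a parameter these coordinates land in $\varphi^*\Lip_1(X)$, which is exactly the object whose $\me_{\cL^1}$-Hausdorff closeness to $\psi^*\Lip_1(Y)$ is controlled by $\dconc$.

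First I would fix $\varepsilon>0$ and choose parameters $\varphi:I\to X$, $\psi:I\to Y$ realizing $d_H(\varphi^*\Lip_1(X),\psi^*\Lip_1(Y))<\dconc(X,Y)+\varepsilon$ in the $\me_{\cL^1}$-Hausdorff distance. Given $\mu=\Phi_*\mu_X\in\cM(X;N)$ with $\Phi=(\Phi_1,\dots,\Phi_N)$, each $\Phi_i\in\Lip_1(X)$, so $f_i:=\Phi_i\circ\varphi\in\varphi^*\Lip_1(X)$. The Hausdorff bound then supplies, for each $i$, a function $\Psi_i\in\Lip_1(Y)$ with $g_i:=\Psi_i\circ\psi$ satisfying $\me_{\cL^1}(f_i,g_i)<\dconc(X,Y)+\varepsilon$. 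Reassembling $\Psi:=(\Psi_1,\dots,\Psi_N):Y\to(\R^N,\|\cdot\|_\infty)$, which is again $1$-Lipschitz, I obtain the candidate $\nu:=\Psi_*\mu_Y\in\cM(Y;N)$.

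To estimate $d_P(\mu,\nu)$, I would use $\varphi_*\cL^1=\mu_X$ and $\psi_*\cL^1=\mu_Y$ to rewrite $\mu=(\Phi\circ\varphi)_*\cL^1$ and $\nu=(\Psi\circ\psi)_*\cL^1$, so that Lemma \ref{lem:dP-me} gives $d_P(\mu,\nu)\le\me_{\cL^1}(\Phi\circ\varphi,\Psi\circ\psi)$. Writing $\delta:=\dconc(X,Y)+\varepsilon$, the $l_\infty$-deviation at $t$ equals $\max_i|f_i(t)-g_i(t)|$, whose super-level set over $\delta$ sits inside the union of the $N$ sets $\{\,|f_i-g_i|>\delta\,\}$, each of $\cL^1$-measure $\le\delta$, hence has measure $\le N\delta$; since $N\delta\ge\delta$ the super-level set over $N\delta$ is no larger, which certifies $\me_{\cL^1}(\Phi\circ\varphi,\Psi\circ\psi)\le N\delta$. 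Thus $d_P(\mu,\nu)\le N(\dconc(X,Y)+\varepsilon)$, and letting $\varepsilon\to0$ after invoking the symmetric estimate finishes the proof.

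The step I expect to require the most care — and the origin of the factor $N$ — is the passage from coordinatewise $\me_{\cL^1}$-closeness to closeness of the $l_\infty$-valued maps: each of the $N$ coordinates contributes its own exceptional set of measure up to $\delta$, and a union bound piles these into $N\delta$. I would also want to record explicitly that the family of thresholds satisfying the defining inequality of $\me$ is upward closed, so that controlling the bad set at level $\delta$ by $N\delta$ genuinely yields $\me_{\cL^1}\le N\delta$; the remaining manipulations are routine bookkeeping with pushforwards through parameters.
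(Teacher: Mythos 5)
Your proof is correct, and it is essentially the standard argument: the paper itself states this lemma without proof (citing \cite{Sy:mmlim}*{Lemma 3.6} and \cite{Sy:book}*{Lemma 5.15}), and the proof in those references proceeds exactly as you do — decompose a $1$-Lipschitz map to $(\R^N,\|\cdot\|_\infty)$ into $N$ real-valued $1$-Lipschitz coordinates, match each coordinate through near-optimal parameters using the $\me_{\cL^1}$-Hausdorff bound defining $\dconc$, reassemble, and convert the union-bound estimate $\me_{\cL^1}(\Phi\circ\varphi,\Psi\circ\psi)\le N\delta$ into a Prokhorov bound via Lemma \ref{lem:dP-me}.
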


\begin{lem}[\cite{Sy:book}*{Lemma 5.41}] \label{lem:cM-NR}
  Let $\cP$ and $\cP'$ be two pyramids.
  For any natural number $N$ and nonnegative real number $R$,
  we have
  \[
  d_H(\cM(\cP;N,R),\cM(\cP';N,R)) \le 2\, d_H(\cM(\cP;N),\cM(\cP';N)).
  \]
\end{lem}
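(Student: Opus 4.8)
The plan is to use the nearest-point projection onto the cube $B^N_R$ to transport a nearby full measurement into a restricted one, exploiting the fact that a $1$-Lipschitz map does not increase the Prokhorov distance. Write $\varepsilon := d_H(\cM(\cP;N),\cM(\cP';N))$. By the symmetry of the claim in $\cP$ and $\cP'$, it suffices to prove that for every $\mu \in \cM(\cP;N,R)$ and every $\delta>0$ there is some $\nu' \in \cM(\cP';N,R)$ with $d_P(\mu,\nu') \le \varepsilon+\delta$; letting $\delta\to 0$ this yields $d_H(\cM(\cP;N,R),\cM(\cP';N,R)) \le \varepsilon$, which is even sharper than the asserted bound $2\varepsilon$.

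Let $\pi:(\R^N,\|\cdot\|_\infty)\to B^N_R$ be the nearest-point projection, i.e.\ the coordinatewise truncation to $[-R,R]$. It is $1$-Lipschitz with respect to $\|\cdot\|_\infty$ and restricts to the identity on $B^N_R$. The first ingredient I would record is the elementary non-expansiveness of push-forward: for a $1$-Lipschitz map $T$ one has $d_P(T_*\mu,T_*\nu)\le d_P(\mu,\nu)$. This follows by applying the defining inequality \eqref{eq:Proh} to the preimage $T^{-1}(A)$ and using the inclusion $U_\varepsilon(T^{-1}(A))\subset T^{-1}(U_\varepsilon(A))$, which is immediate from $T$ being $1$-Lipschitz.

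Now fix $\mu\in\cM(\cP;N,R)\subset\cM(\cP;N)$ and $\delta>0$. Since $\cM(\cP;N)$ lies within Hausdorff distance $\varepsilon$ of $\cM(\cP';N)$, I may choose $\nu\in\cM(\cP';N)$ with $d_P(\mu,\nu)\le\varepsilon+\delta$, and set $\nu':=\pi_*\nu$. Then $\supp\nu'\subset B^N_R$, and since $\pi$ is $1$-Lipschitz with $\pi_*\nu=\nu'$, the mm-space $(\R^N,\|\cdot\|_\infty,\nu')$ is Lipschitz dominated by $(\R^N,\|\cdot\|_\infty,\nu)\in\cP'$; hence $\nu'\in\cM(\cP';N)$ by axiom (1) of Definition \ref{defn:pyramid}, and therefore $\nu'\in\cM(\cP';N,R)$. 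Because $\supp\mu\subset B^N_R$ we have $\pi_*\mu=\mu$, so the non-expansiveness above gives $d_P(\mu,\nu')=d_P(\pi_*\mu,\pi_*\nu)\le d_P(\mu,\nu)\le\varepsilon+\delta$, as required.

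The argument has no serious obstacle; its two load-bearing observations are the non-expansiveness of $d_P$ under $1$-Lipschitz push-forward and the closure of pyramid membership under $\pi_*$ (which is exactly pyramid axiom (1)). The mild subtlety worth checking carefully is that $\nu'$ genuinely lands back in the \emph{restricted} set $\cM(\cP';N,R)$ rather than merely in $\cM(\cP';N)$; this is guaranteed by $\supp\nu'\subset B^N_R$. I would note in passing that the method produces the constant $1$, so the factor $2$ in the statement is not sharp for this approach.
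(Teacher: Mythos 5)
Your proof is correct, and it takes a different (and sharper) route than the one behind the paper's citation: the paper does not prove this lemma itself but quotes it from \cite{Sy:book}*{Lemma 5.41}, where the argument also uses the coordinatewise truncation $\pi$ onto $B^N_R$, but estimates $d_P(\mu,\pi_*\nu)$ by the triangle inequality $d_P(\mu,\pi_*\nu)\le d_P(\mu,\nu)+d_P(\nu,\pi_*\nu)$, bounding the second term by an $\me$-type estimate (in the spirit of Lemma \ref{lem:dP-me}, using that $\nu$ concentrates near $B^N_R$ because $\mu$ does); each of the two terms costs one copy of $d_H(\cM(\cP;N),\cM(\cP';N))$, which is exactly where the factor $2$ comes from. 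You avoid the triangle inequality entirely by exploiting two facts: $\pi_*\mu=\mu$ (since $\supp\mu\subset B^N_R$ and $\pi$ is the identity there), and the non-expansiveness $d_P(\pi_*\mu,\pi_*\nu)\le d_P(\mu,\nu)$ of the Prokhorov distance under $1$-Lipschitz push-forward, which you correctly justify via the inclusion $U_\varepsilon(\pi^{-1}(A))\subset\pi^{-1}(U_\varepsilon(A))$ applied to the one-sided defining inequality \eqref{eq:Proh}. The membership step $\pi_*\nu\in\cM(\cP';N,R)$ via pyramid axiom (1) of Definition \ref{defn:pyramid} and the closedness of $B^N_R$ is also handled correctly. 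The payoff of your route is the constant $1$ in place of $2$; since the lemma is only used through Theorem \ref{thm:rhoR}(2)--(3) and Corollaries \ref{cor:ObsDiam-2R} and \ref{cor:Sep-2R}, where constants are absorbed anyway, the improvement is not needed downstream, but your argument is both shorter and stronger than the quoted one.
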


\section{Obsrevable Diameter for Pyramid} \label{sec:ObsDiam}

\begin{lem} \label{lem:rconti-ObsDiam}
  Let $X$ be an mm-space.
  Then we have the following (1) and (2).
  \begin{enumerate}
  \item The partial diameter $\diam(\mu_X;1-\kappa)$ is right-continuous
  in $\kappa > 0$.
  \item The observable diameter $\ObsDiam(X;-\kappa)$
    is right-continuous in $\kappa > 0$.
  \end{enumerate}
\end{lem}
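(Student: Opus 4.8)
The plan is to prove (1) directly and then deduce (2) from it by a short soft argument. For (1), abbreviate $D(\alpha):=\diam(\mu_X;\alpha)$ for $\alpha\in(0,1)$. As $\alpha$ increases the collection of Borel sets $A$ with $\mu_X(A)\ge\alpha$ shrinks, so the infimum defining $D$ grows and $D$ is nondecreasing. Since $\kappa\mapsto 1-\kappa$ is a decreasing bijection, right-continuity of $\kappa\mapsto\diam(\mu_X;1-\kappa)$ at a point $\kappa_0$ is exactly left-continuity of $D$ at $\alpha_0:=1-\kappa_0$. We may assume $\kappa_0\in(0,1)$, hence $\alpha_0\in(0,1)$, since for $\kappa_0\ge 1$ the function is identically $0$ near $\kappa_0$. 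By monotonicity the left limit $L:=\lim_{\alpha\uparrow\alpha_0}D(\alpha)$ exists and $L\le D(\alpha_0)$, so it suffices to produce a Borel set $A$ with $\mu_X(A)\ge\alpha_0$ and $\diam A\le L$: this forces $D(\alpha_0)\le\diam A\le L$, giving equality.

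To construct $A$, pick $\alpha_k\uparrow\alpha_0$ and Borel sets $A_k$ with $\mu_X(A_k)\ge\alpha_k$ and $\diam A_k\le D(\alpha_k)+1/k\le L+1/k$. Consider the restricted measures $\nu_k:=\mu_X|_{A_k}$, i.e. $\nu_k(B)=\mu_X(A_k\cap B)$; these satisfy $\nu_k\le\mu_X$ and $\nu_k(X)=\mu_X(A_k)\to\alpha_0$. Because each $\nu_k$ is dominated by the single (tight, as a Borel probability measure on a complete separable space) measure $\mu_X$, the family $\{\nu_k\}$ is uniformly tight, so by Prokhorov's theorem a subsequence, which after relabeling we again write $\nu_k\to\nu$, converges weakly to a finite Borel measure $\nu$. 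Testing against the constant function $1$ gives $\nu(X)=\lim_k\nu_k(X)=\alpha_0$, and the portmanteau inequality $\nu(U)\le\liminf_k\nu_k(U)\le\mu_X(U)$ on open sets $U$ yields $\nu\le\mu_X$. Setting $A:=\supp\nu$ we get $\mu_X(A)\ge\nu(A)=\nu(X)=\alpha_0$.

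It remains to bound $\diam A$, and this is where non-compactness of $X$ makes the argument delicate: a priori the near-minimizers $A_k$ could drift apart, so one must rule out both escape of mass and blow-up of the diameter in the limit. Tightness of $\mu_X$ handles the former (it is what forced $\nu(X)=\alpha_0$), and the support/portmanteau argument handles the latter. Indeed, take $x,y\in A$ and $s>0$; since $x,y\in\supp\nu$ we have $\nu(U_s(\{x\}))>0$ and $\nu(U_s(\{y\}))>0$, whence by portmanteau $\nu_k(U_s(\{x\}))>0$ and $\nu_k(U_s(\{y\}))>0$ for all large $k$, so $A_k$ meets both $s$-balls. Choosing points of $A_k$ in each gives $d_X(x,y)\le 2s+\diam A_k\le 2s+L+1/k$; letting $k\to\infty$ and then $s\to 0$ yields $d_X(x,y)\le L$. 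Hence $\diam A\le L$, which completes the proof of (1).

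For (2), fix $\kappa_0>0$ and, for each $1$-Lipschitz $f:X\to\R$, set $g_f(\kappa):=\diam(f_*\mu_X;1-\kappa)$, so that $\ObsDiam(X;-\kappa)=\sup_f g_f(\kappa)$. Each $f_*\mu_X$ is a Borel probability measure on $\R$, so by (1) every $g_f$ is nonincreasing and right-continuous, giving $\inf_{\kappa>\kappa_0}g_f(\kappa)=g_f(\kappa_0)$. The supremum $\kappa\mapsto\ObsDiam(X;-\kappa)$ is nonincreasing, so $\inf_{\kappa>\kappa_0}\ObsDiam(X;-\kappa)\le\ObsDiam(X;-\kappa_0)$; conversely the minimax inequality gives $\inf_{\kappa>\kappa_0}\sup_f g_f(\kappa)\ge\sup_f\inf_{\kappa>\kappa_0}g_f(\kappa)=\sup_f g_f(\kappa_0)=\ObsDiam(X;-\kappa_0)$. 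The two inequalities combine to the desired right-continuity at $\kappa_0$. Thus the only real work is in (1), and its crux is the passage to the limit of the near-minimizing sets.
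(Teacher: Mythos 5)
Your proof of (1) is correct, but it takes a genuinely different route from the paper's. The paper works directly with the near-minimizing sets: it exhausts $X$ by compact sets $K_p$ (inner regularity of $\mu_X$), uses Blaschke-type compactness of the closed subsets of each $K_p$ under the Hausdorff distance together with a diagonal argument to extract Hausdorff limits of $A_n\cap K_p$, and takes their increasing union as the desired set. You instead encode the near-minimizers as the restricted measures $\nu_k=\mu_X|_{A_k}$, extract a weak subsequential limit $\nu\le\mu_X$ via tightness and Prokhorov's theorem, and take $A=\supp\nu$; the portmanteau inequality then simultaneously preserves the mass and caps the diameter. This trades the Blaschke-plus-diagonal machinery for standard weak compactness of measures, and is arguably cleaner. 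One inaccuracy: your claim $\nu_k(X)=\mu_X(A_k)\to\alpha_0$ is unjustified, since $\mu_X(A_k)$ is only bounded below by $\alpha_k$; but it is also unneeded, because the masses lie in $[\alpha_k,1]$, so along a further subsequence they converge to some $\beta\ge\alpha_0$, and $\nu(X)=\beta\ge\alpha_0$ is all your argument uses.

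Part (2), however, has a genuine error: you have the monotone direction backwards. Since each $g_f$ is \emph{nonincreasing}, right-continuity at $\kappa_0$ says that the right limit, which is an increasing limit, equals the value, i.e.\ $\sup_{\kappa>\kappa_0}g_f(\kappa)=g_f(\kappa_0)$. Your assertion $\inf_{\kappa>\kappa_0}g_f(\kappa)=g_f(\kappa_0)$ is false: since $g_f(\kappa)=0$ for $\kappa\ge 1$, that infimum is $0$, so $\sup_f\inf_{\kappa>\kappa_0}g_f(\kappa)=0$, and the chain ending in $\ObsDiam(X;-\kappa_0)$ breaks whenever the observable diameter is positive. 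Likewise the identity you ultimately prove, $\inf_{\kappa>\kappa_0}\ObsDiam(X;-\kappa)=\ObsDiam(X;-\kappa_0)$, is not right-continuity and is itself false in general (the left side is $0$). The repair is mechanical: replace every $\inf_{\kappa>\kappa_0}$ by $\sup_{\kappa>\kappa_0}$ (equivalently, the limit as $\kappa\to\kappa_0+$). Then no minimax inequality is needed, because two suprema always commute:
\[
\sup_{\kappa>\kappa_0}\ObsDiam(X;-\kappa)
=\sup_{\kappa>\kappa_0}\sup_f g_f(\kappa)
=\sup_f\sup_{\kappa>\kappa_0}g_f(\kappa)
=\sup_f g_f(\kappa_0)
=\ObsDiam(X;-\kappa_0),
\]
where the third equality is (1) applied to $f_*\mu_X$ together with monotonicity of $g_f$. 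This repaired argument is exactly the paper's proof of (2).
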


\begin{proof}
  We prove (1).
  Let $\{\delta_n\}_{n=1}^\infty$ be a monotone decreasing sequence of
  positive real numbers converging to zero.
  Then, $\diam(\mu_{X}; 1 -(\kappa + \delta_n))$ is monotone nondecreasing in $n$
  and bounded from above by $\diam(\mu_{X}; 1 -\kappa)$.
  Set
  \[
  \alpha := \lim_{n \to \infty} \diam(\mu_{X}; 1 -(\kappa + \delta_n)).
  \]
  It is clear that $\alpha \le \diam(\mu_{X}; 1-\kappa)$.
  For (1), it suffices to prove $\diam(\mu_{X}; 1-\kappa) \le \alpha$.
  There are closed subsets $A_n \subset X$, $n=1,2,\dots$,
  with the property that
  $\mu_{X}(A_n) \ge 1 - (\kappa + \delta_n)$ for any $n$ and
  \[
  \lim_{n\to\infty} \diam(A_n) = \alpha.
  \]
  We take a monotone decreasing sequence $\{ \eta_p\}_{p=1}^{\infty}$
  of positive real numbers converging to zero.
  The inner regularity of $\mu_X$ proves that there are compact subsets
  $K_p \subset X$, $p=1,2,\dots$, such that $\mu_X(K_p) > 1 - \eta_p$
  and $K_p \subset K_{p+1}$ for every $p$.
  We have $\mu_X(A_n \cap K_p) > 1-(\kappa+\delta_n+\eta_p)$.
  Note that the set of closed subsets in $K_p$ is compact with respect to
  the Hausdorff distance.
  Thus, there is a Hausdorff convergent subsequence of
  $\{A_n \cap K_p\}_{n=1}^\infty$ for each $p$.
  By a diagonal argument, we find a common subsequence $\{n(m)\}$ of $\{n\}$
  in such a way that $A_{n(m)} \cap K_p$ Hausdorff converges as $m\to\infty$
  for any $p$.  Denote its limit by $A_p$.
  $\{A_p\}_{p=1}^\infty$ is a monotone nondecreasing sequence of compact
  subsets of $X$ satisfying that $\mu_{X}(A_p) \ge 1-(\kappa+\eta_p)$ for any $p$.
  Setting
  \[
  A := \bigcup_{p=1}^{\infty} A_p,
  \]
  we have $\mu_X(A) \ge 1-\kappa$.
  Since $\limsup_{n\to\infty} \diam(A_n) \le \alpha$,
  we obtain
  \[
  \diam(\mu_{X}; 1-\kappa) \le \diam(A) \le \alpha.
  \] 
  This completes the proof of (1).

  We prove (2).
  Since $\ObsDiam(X; -(\kappa + \delta))$ is monotone nonincreasing in $\delta$,
  we have
  \[
  \lim_{\delta\to 0+} \ObsDiam(X; -(\kappa + \delta)) \le \ObsDiam(X; -\kappa).
  \]
  By (1),
  \begin{align*}
    \ObsDiam(X; -\kappa) 
    & = \sup_{f\in\Lip_1(X)} \diam(f_*\mu_X; 1-\kappa) \\
    & = \sup_{f\in\Lip_1(X)} \lim_{\delta\to 0+} \diam(f_*\mu_X; 1-(\kappa + \delta)) \\
    & \le \lim_{\delta\to 0+} \ObsDiam(X; -(\kappa + \delta)).
  \end{align*}
  This completes the proof of the lemma.
\end{proof}

Note that $\diam(\mu_X;1-\kappa)$ and $\ObsDiam(X;-\kappa)$
are not necessarily left-continuous in $\kappa$,
e.g.,~for a discrete space.

\begin{defn}[Observable diameter of pyramid] \label{defn:ObsDiam-pyramid}
  Let $\kappa > 0$.
  The \emph{$\kappa$-observable diameter of a pyramid $\cP$} is defined to be
  \[
  \ObsDiam(\cP;-\kappa)
  := \lim_{\delta\to 0+} \sup_{X \in \cP} \ObsDiam(X;-(\kappa+\delta))
  \quad (\le +\infty).
  \]
\end{defn}

Note that $\sup_{X \in \cP} \ObsDiam(X;-(\kappa+\delta))$ is monotone
nonincreasing in $\delta$, so that the above limit always exists.
It follows from Definition \ref{defn:ObsDiam-pyramid} that
$\ObsDiam(\cP;-\kappa)$ is right-continuous in $\kappa > 0$.

The following means the consistency of the definition.

\begin{prop}
  For any mm-space $X$ we have
  \[
  \ObsDiam(\cP_X;-\kappa) = \ObsDiam(X;-\kappa)
  \]
  for any $\kappa > 0$.
\end{prop}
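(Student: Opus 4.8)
The plan is to show that the supremum defining $\ObsDiam(\cP_X;-\kappa)$ is in fact attained at $X$ itself, which collapses the pyramid-level definition back to the mm-space-level one once the auxiliary shift in $\delta$ is reconciled. The single guiding observation is that $X$ is the $\prec$-maximal element of its own associated pyramid $\cP_X$, both belonging to $\cP_X$ and dominating everything in it.

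First I would note that $X \prec X$ holds trivially, via the identity map, so that $X \in \cP_X$. Consequently, for every $\delta > 0$,
\[
\ObsDiam(X;-(\kappa+\delta)) \le \sup_{X' \in \cP_X} \ObsDiam(X';-(\kappa+\delta)).
\]
For the reverse inequality I would invoke Proposition \ref{prop:ObsDiam-dom}: by the definition of $\cP_X$, every $X' \in \cP_X$ satisfies $X' \prec X$, and hence $\ObsDiam(X';-(\kappa+\delta)) \le \ObsDiam(X;-(\kappa+\delta))$. Taking the supremum over all such $X'$ gives
\[
\sup_{X' \in \cP_X} \ObsDiam(X';-(\kappa+\delta)) \le \ObsDiam(X;-(\kappa+\delta)),
\]
so that the two sides coincide: the supremum equals $\ObsDiam(X;-(\kappa+\delta))$ for each fixed $\delta > 0$.

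Finally I would pass to the limit $\delta \to 0+$. By Definition \ref{defn:ObsDiam-pyramid} the left-hand side tends to $\ObsDiam(\cP_X;-\kappa)$, while the right-hand side tends to $\ObsDiam(X;-\kappa)$ by the right-continuity of $\kappa \mapsto \ObsDiam(X;-\kappa)$ established in Lemma \ref{lem:rconti-ObsDiam}(2). This yields the desired identity. I do not anticipate a genuine obstacle: the entire argument rests on recognizing that $X$ simultaneously lies in and dominates $\cP_X$, and the only subtlety—the $\delta$-shift built into the pyramid definition—is precisely what the right-continuity lemma is designed to absorb.
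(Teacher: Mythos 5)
Your proof is correct and is exactly the paper's argument: the paper's proof consists of citing Proposition \ref{prop:ObsDiam-dom} (domination gives the supremum over $\cP_X$ is attained at $X$) together with Lemma \ref{lem:rconti-ObsDiam} (right-continuity absorbs the $\delta$-shift), which is precisely what you spelled out. No differences worth noting.
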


\begin{proof}
  The proposition follows from Proposition \ref{prop:ObsDiam-dom} and
  Lemma \ref{lem:rconti-ObsDiam}.
\end{proof}

For a pyramid $\cP$ and a real number $t > 0$,
we define
\[
t\cP := \{\;tX \mid X \in \cP\;\}.
\]

The following proposition is obvious.

\begin{prop} \label{prop:ObsDiam-scale}
  Let $\cP$ be a pyramid.
  Then we have
  \[
  \ObsDiam(t\cP;-\kappa) = t \ObsDiam(\cP;-\kappa)
  \]
  for any $t,\kappa > 0$.
\end{prop}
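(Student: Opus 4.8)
The plan is to reduce everything to the scaling property for mm-spaces, namely $\ObsDiam(tX;-\kappa) = t\,\ObsDiam(X;-\kappa)$ established in the earlier proposition, together with the elementary fact that multiplication by a fixed positive constant commutes with both the supremum over a pyramid and the one-sided limit $\delta\to 0+$ appearing in Definition \ref{defn:ObsDiam-pyramid}. In other words, I expect this to be a pure unwinding of definitions, exactly as the paper advertises by calling it ``obvious''.

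First I would fix $t,\kappa>0$ and an auxiliary $\delta>0$. The map $X\mapsto tX$ is a bijection of $\cP$ onto $t\cP$ (with inverse $Y\mapsto t^{-1}Y$), so reindexing the supremum that defines the $\kappa$-observable diameter along this bijection gives
\[
\sup_{Y\in t\cP}\ObsDiam(Y;-(\kappa+\delta)) = \sup_{X\in\cP}\ObsDiam(tX;-(\kappa+\delta)).
\]
Applying the mm-space scaling property termwise turns the right-hand side into $\sup_{X\in\cP} t\,\ObsDiam(X;-(\kappa+\delta))$, and since $t>0$ is a constant it factors out of the supremum, yielding
\[
\sup_{Y\in t\cP}\ObsDiam(Y;-(\kappa+\delta)) = t\sup_{X\in\cP}\ObsDiam(X;-(\kappa+\delta)).
\]

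Finally I would let $\delta\to 0+$. Both suprema converge monotonically by the remark following Definition \ref{defn:ObsDiam-pyramid}, and the positive factor $t$ passes through the limit, so
\[
\ObsDiam(t\cP;-\kappa) = \lim_{\delta\to 0+} t\sup_{X\in\cP}\ObsDiam(X;-(\kappa+\delta)) = t\,\ObsDiam(\cP;-\kappa).
\]
There is essentially no obstacle here. The only two points that deserve a word of care are that $t\cP$ is again a pyramid, so that the left-hand side is legitimately the observable diameter of a pyramid (although the defining supremum makes sense for any subset of $\cX$ regardless), and that scaling by the fixed $t>0$ commutes with both the supremum and the monotone limit; both are immediate.
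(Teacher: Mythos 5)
Your proof is correct and is exactly the argument the paper has in mind: the paper offers no written proof (it labels the proposition ``obvious''), and the intended justification is precisely your unwinding of Definition \ref{defn:ObsDiam-pyramid} via the bijection $X \mapsto tX$, the mm-space scaling identity, and the fact that the constant $t>0$ passes through both the supremum and the monotone limit $\delta \to 0+$.
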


\begin{defn}[$\rho_R$]
  For two pyramids $\cP$, $\cP'$, and for a positive real number $R$, we define
  \[
  \rho_R(\cP,\cP')
  := \sum_{N=1}^\infty \frac{1}{N\, 2^{N+1}} d_H(\cM(\cP;N,NR),\cM(\cP',N,NR)),
  \]
  where $d_H$ is the Hausdorff distance with respect to the Prokhorov metric.
\end{defn}

\begin{lem} \label{lem:rhoR}
  Let $\cP$ and $\cP_n$, $n=1,2,\dots$, be pyramids.
  Then the following {\rm(1), (2),} and {\rm(3)} are all equivalent to each other.
  \begin{enumerate}
  \item $\cP_n$ converges weakly to $\cP$ as $n\to\infty$.
  \item $\cP_n \cap \cX(N,R)$ Hausdorff converges to $\cP \cap \cX(N,R)$
    as $n\to\infty$
    for any natural number $N$ and any nonnegative real number $R$,
    where the Hausdorff distance is induced from $\square$.
  \item $\cM(\cP_n;N,R)$ Hausdorff converges to $\cM(\cP;N,R)$ as $n\to\infty$
    for any natural number $N$ and any nonnegative real number $R$,
    where the Hausdorff distance is induced from $d_P$.
  \end{enumerate}
\end{lem}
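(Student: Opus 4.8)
The plan is to establish the full equivalence through two bridges, $(1)\Leftrightarrow(2)$ linking weak convergence to the box-slices $\cP\cap\cX(N,R)$, and $(2)\Leftrightarrow(3)$ linking those slices to the measurements $\cM(\cP;N,R)$. The device underlying both is \emph{finite-dimensional approximation}: for any mm-space $X$, mapping $X$ into $\R^N$ by the $1$-Lipschitz map $x\mapsto(d_X(x,p_i)-d_X(x_0,p_i))_{i\le N}$ built from a dense sequence $\{p_i\}$, and then clamping into $B^N_R$ by the $1$-Lipschitz nearest-point projection, yields an mm-space $X_{N,R}\in\cX(N,R)$ with $X_{N,R}\prec X$ and $\square(X_{N,R},X)\to 0$ as $N,R\to\infty$. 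Since a pyramid is downward closed and $\square$-closed, this gives the key dichotomy: $X\in\cP$ if and only if $X_{N,R}\in\cP$ for all $N,R$.

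For $(2)\Leftrightarrow(3)$ I would consider the map $q=q_{N,R}:\cM(N,R)\to\cX(N,R)$, $\mu\mapsto(B^N_R,\|\cdot\|_\infty,\mu)$. It is surjective, and by Lemma \ref{lem:box-dP} it is $2$-Lipschitz from $d_P$ to $\square$; moreover $\cM(\cP;N,R)=q^{-1}(\cP\cap\cX(N,R))$, so the measurement sets are exactly the $q$-saturations of the box-slices. As a $2$-Lipschitz map contracts Hausdorff distance up to the factor $2$, condition $(3)$ immediately yields $(2)$. For the converse I would argue inside the compact space $\cM(N,R)$: the ``upper'' Hausdorff direction, that every $d_P$-limit of points of $\cM(\cP_n;N,R)$ lies in $\cM(\cP;N,R)$, follows from compactness together with continuity of $q$ and condition $(2)$; the ``lower'' direction requires approximating each $\mu\in\cM(\cP;N,R)$ in $d_P$ by elements of $\cM(\cP_n;N,R)$, where the $\square$-approximant supplied by $(2)$ is only mm-isomorphic-close, so one must move along the fiber of $q$ to upgrade $\square$-closeness to $d_P$-closeness, using the saturation of $\cM(\cP_n;N,R)$ and the isometry structure of $(B^N_R,\|\cdot\|_\infty)$.

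For $(1)\Leftrightarrow(2)$ I would work throughout in the compact space $\cX(N,R)$. Assuming $(1)$: the direction that no point of $\cP_n\cap\cX(N,R)$ escapes $\cP\cap\cX(N,R)$ follows by compactness, since any $\square$-limit point $Y_*$ of the slices is approached by elements of $\cP_n$, forcing $Y_*\in\cP$ by $(1b)$ and hence $Y_*\in\cP\cap\cX(N,R)$; the reverse inclusion follows from $(1a)$ by covering the compact slice $\cP\cap\cX(N,R)$ with a finite $\varepsilon$-net and pulling each net point's $\cP_n$-approximant back into $\cX(N,R)$ via the projection, which preserves membership in $\cP_n$. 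Assuming $(2)$: for $X\in\cP$ I combine the dichotomy, that some $X_{N,R}\in\cP\cap\cX(N,R)$ is $\square$-close to $X$, with the Hausdorff convergence of slices to produce $\cP_n$-elements near $X$, giving $(1a)$; for $X\notin\cP$ the dichotomy yields $N,R$ with $X_{N,R}\notin\cP$, and if some $Y_n\in\cP_n$ had $\square(X,Y_n)\to0$, a parallel projection of $Y_n$ into $\cX(N,R)$ would produce elements of $\cP_n\cap\cX(N,R)$ converging to $X_{N,R}$, forcing $X_{N,R}\in\cP$ by $(2)$, a contradiction, so $(1b)$ holds.

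The main obstacle I expect is the ``lower'' Hausdorff direction in $(2)\Rightarrow(3)$: because distinct measures on the cube can define mm-isomorphic spaces, $q$ is not injective, and $\square$-closeness of mm-spaces on $B^N_R$ need not transfer to $d_P$-closeness of the representing measures without moving within the mm-isomorphism fiber. Showing that the saturated set $\cM(\cP_n;N,R)$ always contains a genuinely $d_P$-close representative is where the real work lies. A secondary technical point, recurring in the $(1)\Leftrightarrow(2)$ arguments, is the coupling construction that converts $\square(X,Y_n)\to0$ into a matched projection of $Y_n$ that is $\square$-close to $X_{N,R}$ inside a fixed slice; once these two constructions are in place, the remaining steps are compactness and finite-dimensional-approximation bookkeeping.
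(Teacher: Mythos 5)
Your skeleton matches the paper's decomposition into $(1)\Leftrightarrow(2)$ and $(2)\Leftrightarrow(3)$, and your $(3)\Rightarrow(2)$ argument --- $q$ surjective, $\cM(\cP;N,R)=q^{-1}(\cP\cap\cX(N,R))$, and $q$ being $2$-Lipschitz by Lemma \ref{lem:box-dP} --- is exactly the paper's. (The paper disposes of the two hard implications by citation: $(1)\Leftrightarrow(2)$ is \cite{Sy:book}*{Lemma 6.18} and $(2)\Rightarrow(3)$ is \cite{Sy:book}*{Lemma 7.23}.) However, your proposal has a genuine gap precisely where you say ``the real work lies'': the lower Hausdorff inclusion in $(2)\Rightarrow(3)$, i.e.\ producing $\nu_n\in\cM(\cP_n;N,R)$ with $d_P(\nu_n,\mu)\to 0$ for a given $\mu\in\cM(\cP;N,R)$. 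You leave this unproved, and the mechanism you gesture at --- moving along the fiber of $q$ using ``the isometry structure of $(B^N_R,\|\cdot\|_\infty)$'' --- would not work: the fiber of $q$ through $\nu$ consists of all measures $\nu'$ on the cube with $(B^N_R,\nu')$ mm-isomorphic to $(B^N_R,\nu)$, and an mm-isomorphism is only an isometry between the supports; it need not extend to an ambient isometry of the cube, so there is no way to ``rotate'' a representing measure into a $d_P$-neighborhood of $\mu$ by symmetries of $B^N_R$.

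The gap can be closed with tools already in the paper, and this is essentially what the cited Lemma 7.23 amounts to. Combining Lemma \ref{lem:cM-dconc}, Lemma \ref{lem:cM-NR}, and Proposition \ref{prop:dconc-box} gives
\[
d_H(\cM(Y;N,R),\cM(Z;N,R)) \le 2N\,\square(Y,Z)
\]
for any two mm-spaces $Y,Z$. Now take $\mu\in\cM(\cP;N,R)$; by $(2)$ there are $Y_n\in\cP_n\cap\cX(N,R)$ with $\square(Y_n,q(\mu))\to 0$; since $\mu\in\cM(q(\mu);N,R)$ and $\cM(Y_n;N,R)\subset\cM(\cP_n;N,R)$ (because $Y_n\in\cP_n$ and $\cP_n$ is downward closed), the displayed inequality yields $\nu_n\in\cM(\cP_n;N,R)$ with $d_P(\nu_n,\mu)\to 0$, as required. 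The same device is what your $(1)\Leftrightarrow(2)$ sketch needs at the two places where you invoke an unspecified ``projection''/``parallel projection'' of a $\cP_n$-approximant into the slice $\cX(N,R)$: converting $\square$-closeness of abstract spaces into $\square$-closeness \emph{within} the slice is again the passage through measurements, not a pointwise projection. Finally, your remaining ingredient, $\square(X_{N,R},X)\to 0$ for the Kuratowski-type maps, is also asserted without proof; it is standard but not free (one must restrict to a compact set of nearly full measure before the distance-function coordinates become an almost-isometry). As written, the proposal is an honest outline whose acknowledged technical steps are exactly the content of the results the paper cites, and the one concrete mechanism you offer for the hardest step is the wrong one.
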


\begin{proof}
  `(1) $\Longleftrightarrow$ (2)' follows from
  \cite{Sy:book}*{Lemma 6.18}. 
  
  `(3) $\implies$ (2)' follows from
  \[
  d_H(\cP\cap\cX(N,R),\cP'\cap\cX(N,R)) \le 2 d_H(\cM(\cP;N,R),\cM(\cP';N,R)),
  \]
  which is implied by Lemma \ref{lem:box-dP}.

  `(2) $\implies$ (3)' follows from \cite{Sy:book}*{Lemma 7.23}.
  This completes the proof.
\end{proof}

\begin{thm} \label{thm:rhoR}
  We have the following {\rm(1)}, {\rm(2)}, and {\rm(3)}.
  \begin{enumerate}
  \item $\rho_R$ for each $R > 0$ is a metric on $\Pi$ compatible with
    weak convergence.
  \item For any two pyramids $\cP$, $\cP'$, for any natural number $N$,
    and for any positive real number $R$, we have
    \[
    d_H(\cM(\cP;N,NR),\cM(\cP';N,NR)) \le N\, 2^{N+1} \rho_R(\cP,\cP').
    \]
  \item For any two mm-spaces $X$, $Y$, and any positive real number $R$,
    \[
    \rho_R(\cP_X,\cP_Y) \le \dconc(X,Y).
    \]
  \end{enumerate}
\end{thm}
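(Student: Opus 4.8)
The plan is to dispatch (2) and (3) quickly from the definition and the two Hausdorff-distance lemmas, and then to spend the effort on (1), whose only nonformal ingredient is positive-definiteness. For (2), every summand in the definition of $\rho_R$ is nonnegative, so I simply discard all terms but the $N$-th, getting $\rho_R(\cP,\cP')\ge \frac{1}{N\,2^{N+1}}d_H(\cM(\cP;N,NR),\cM(\cP';N,NR))$, which is the asserted inequality after clearing the constant. For (3), I chain the estimates already available: applying Lemma \ref{lem:cM-NR} with radius $NR$ and then Lemma \ref{lem:cM-dconc} gives $d_H(\cM(X;N,NR),\cM(Y;N,NR))\le 2\,d_H(\cM(X;N),\cM(Y;N))\le 2N\dconc(X,Y)$ (using $\cM(\cP_X;N,NR)=\cM(X;N,NR)$). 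Substituting into the series and summing the geometric series $\sum_{N\ge1}2^{-N}=1$ yields $\rho_R(\cP_X,\cP_Y)\le\dconc(X,Y)$.

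For (1), the metric axioms other than positivity are immediate: nonnegativity and symmetry are inherited termwise from $d_H$; finiteness follows from $d_P\le 1$ (hence $d_H\le1$) together with summability of $\sum_N\frac{1}{N\,2^{N+1}}$; and the triangle inequality is the termwise triangle inequality for $d_H$ summed against the weights. The substantive step is positive-definiteness. Suppose $\rho_R(\cP,\cP')=0$; then $\cM(\cP;N,NR)=\cM(\cP';N,NR)$ for every $N$, these being compact sets at $d_H$-distance $0$. To propagate equality to all dimensions and radii I would use the coordinate projection $\pi\colon\R^M\to\R^N$ ($M\ge N$) onto the first $N$ coordinates, which is $1$-Lipschitz for $l_\infty$; its pushforward is $1$-Lipschitz for $d_P$ and, by downward-closedness of pyramids together with the isometric inclusion $\R^N\hookrightarrow\R^M$, maps $\cM(\,\cdot\,;M,S)$ onto $\cM(\,\cdot\,;N,S)$. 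Hence equality in dimension $M$ at radius $MR$ descends to $\cM(\cP;N,MR)=\cM(\cP';N,MR)$ for all $N\le M$. Since passing to a smaller radius $S\le MR$ is just intersection with $\cM(N,S)$, and the radii $MR$ are cofinal, I obtain $\cM(\cP;N,S)=\cM(\cP';N,S)$ for every $N$ and every $S\ge0$. Feeding the constant sequence $\cP_n:=\cP'$ into Lemma \ref{lem:rhoR} ((3)$\Rightarrow$(1)) shows $\cP'$ converges weakly to $\cP$, whence $\cP=\cP'$ by uniqueness of weak limits.

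It remains to prove compatibility with weak convergence. For the forward implication, if $\cP_n$ converges weakly to $\cP$ then Lemma \ref{lem:rhoR} gives $d_H(\cM(\cP_n;N,NR),\cM(\cP;N,NR))\to0$ for each fixed $N$; as these terms are dominated by the summable weights $\frac{1}{N\,2^{N+1}}$, dominated convergence for series yields $\rho_R(\cP_n,\cP)\to0$. For the converse I would avoid a radius-by-radius argument and use compactness: given $\rho_R(\cP_n,\cP)\to0$, any subsequence has, by compactness of $(\Pi,\rho)$, a further subsequence converging weakly to some $\cQ$; the forward implication then forces $\rho_R\to0$ along it to $\cQ$, so $\rho_R(\cP,\cQ)=0$ and $\cP=\cQ$ by positive-definiteness. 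Since every subsequence thus has a further subsequence converging weakly to $\cP$ and weak convergence is metrizable, the whole sequence converges weakly to $\cP$.

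The only genuine difficulty is that $\rho_R$ records the $N$-measurement at the single radius $NR$, whereas weak convergence (through Lemma \ref{lem:rhoR}) involves all radii and dimensions. I expect to dissolve this mismatch in two places: for positive-definiteness, equality of measurements propagates down in dimension via the projection pushforward and down in radius via intersection, so the cofinal radii $NR$ suffice; for the converse half of compatibility, the loss of control over radii under mere convergence (rather than equality) is sidestepped entirely by the compactness argument, which needs only the already-proved forward implication and positive-definiteness.
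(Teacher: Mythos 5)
Your proposal is correct and follows essentially the same route as the paper: parts (2) and (3) are proved by exactly the paper's arguments (dropping all but the $N$-th summand, and chaining Lemma \ref{lem:cM-NR} with Lemma \ref{lem:cM-dconc} before summing the geometric series), and part (1) is derived, as in the paper, from Lemma \ref{lem:rhoR} together with the quoted compactness of $(\Pi,\rho)$. The only difference is one of completeness: the paper dismisses (1) with a one-line citation, while you correctly supply the missing work --- positive-definiteness via the coordinate-projection pushforward (which maps $\cM(\,\cdot\,;M,S)$ onto $\cM(\,\cdot\,;N,S)$) and intersection with smaller balls to pass from the radii $NR$ to all radii, and the converse half of compatibility via the subsequence-compactness argument --- all of which is sound.
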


\begin{proof}
  (1) follows from Lemma \ref{lem:rhoR}.

  (2) is obvious.
  
  By Lemmas \ref{lem:cM-dconc} and \ref{lem:cM-NR}, we have
  \begin{align*}
  d_H(\cM(X;N,NR),\cM(Y;N,NR))
  &\le 2\, d_H(\cM(X;N),\cM(Y;N))\\
  &\le 2N \dconc(X,Y),
  \end{align*}
  which implies (3).
  This completes the proof.
\end{proof}

\begin{lem} \label{lem:dP-diam}
 Let $\mu$ and $\nu$ be two Borel probability measures on $\R$
 and $\varepsilon$ a positive real number.
 If $d_P(\mu,\nu) < \varepsilon$, then
 \[
 \diam(\mu;1-(\kappa+\varepsilon)) \le \diam(\nu;1-\kappa) + 2\varepsilon.
 \]
 for any $\kappa > 0$.
\end{lem}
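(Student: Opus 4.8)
The plan is to transport a near-optimal set for $\nu$ over to $\mu$ by passing to its $\varepsilon$-neighborhood, using the Prokhorov inequality to control the loss of measure and the elementary fact that enlarging a subset of $\R$ by $\varepsilon$ increases its diameter by at most $2\varepsilon$. First I would pin down the exact form of the Prokhorov condition I need. The set of $\delta > 0$ for which $\mu(U_\delta(A)) \ge \nu(A) - \delta$ holds for all Borel $A$ is an up-set, since $U_\delta(A)$ is monotone in $\delta$ and $-\delta$ is as well; because $d_P(\mu,\nu) < \varepsilon$, the value $\delta = \varepsilon$ lies in this set. Hence
\[
\mu(U_\varepsilon(A)) \ge \nu(A) - \varepsilon
\]
for every Borel subset $A \subset \R$.

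Next, if $\diam(\nu;1-\kappa) = +\infty$ the asserted inequality is trivial, so I may assume it is finite. By the definition of the partial diameter as an infimum, for any $\eta > 0$ I can choose a Borel set $A \subset \R$ with $\nu(A) \ge 1-\kappa$ and $\diam(A) \le \diam(\nu;1-\kappa) + \eta$. Applying the displayed Prokhorov inequality to this $A$ yields
\[
\mu(U_\varepsilon(A)) \ge \nu(A) - \varepsilon \ge 1 - (\kappa+\varepsilon),
\]
so that $U_\varepsilon(A)$ is an admissible competitor in the definition of $\diam(\mu;1-(\kappa+\varepsilon))$.

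Finally I would bound the diameter of this neighborhood: for $x,y \in U_\varepsilon(A)$ there are $a,b \in A$ with $|x-a| < \varepsilon$ and $|y-b| < \varepsilon$, whence $|x-y| \le |x-a| + |a-b| + |b-y| < \diam(A) + 2\varepsilon$, giving $\diam(U_\varepsilon(A)) \le \diam(A) + 2\varepsilon$. Combining the two steps,
\[
\diam(\mu;1-(\kappa+\varepsilon)) \le \diam(U_\varepsilon(A)) \le \diam(A) + 2\varepsilon \le \diam(\nu;1-\kappa) + \eta + 2\varepsilon,
\]
and letting $\eta \to 0$ completes the argument.

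There is no deep obstacle here; the content is a bookkeeping estimate. The only points demanding care are the strict-versus-nonstrict handling of $\delta$ in the Prokhorov distance — I must use the \emph{strict} hypothesis $d_P(\mu,\nu) < \varepsilon$ precisely to license taking $\delta = \varepsilon$ in the neighborhood inequality — and the degenerate case $\diam(\nu;1-\kappa) = +\infty$, which must be dispatched separately so that choosing the approximating set $A$ is legitimate.
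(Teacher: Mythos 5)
Your proof is correct and takes essentially the same route as the paper's: both apply the Prokhorov inequality $\mu(U_\varepsilon(A)) \ge \nu(A) - \varepsilon$ to a (near-)optimal set $A$ for $\nu$, noting that passing to $U_\varepsilon(A)$ loses at most $\varepsilon$ in measure and gains at most $2\varepsilon$ in diameter. The only difference is presentational — the paper writes this as a chain of inequalities between infima, while you use an explicit $\eta$-approximation and then let $\eta \to 0$.
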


\begin{proof}
  Since $\mu(U_\varepsilon(A)) \ge \nu(A) - \varepsilon$ for any Borel subset
  $A \subset \R$, we have
  \begin{align*}
    \diam(\nu;1-\kappa)
    &= \inf\{\;\diam(A) \mid \nu(A) \ge 1-\kappa\;\}\\
    &\ge \inf\{\;\diam(A) \mid \mu(U_\varepsilon(A)) \ge 1-\kappa-\varepsilon\;\} \\
    &\ge \inf\{\;\diam(B) \mid \mu(B) \ge 1-\kappa-\varepsilon\;\} - 2\varepsilon \\
    &= \diam(\mu;1-\kappa-\varepsilon) -2\varepsilon.
  \end{align*}
\end{proof}

\begin{defn}[$\diam^D$, $\ObsDiam^D$]
  For an mm-space $X$ and a nonnegative real number $D$, we set
  \begin{align*}
    \diam^D(\mu_X;1-\kappa) &:= \min\{\;\diam(\mu_X;1-\kappa),D\;\},\\
    \ObsDiam^D(X;-\kappa) &:= \min\{\;\ObsDiam(X;-\kappa),D\;\}.
  \end{align*}
\end{defn}

\begin{lem} \label{lem:ObsDiam-2R}
  Let $\cP$ and $\cP'$ be two pyramids.
  If we have
  \[
  \cM(\cP;1,R) \subset U_\varepsilon(\cM(\cP';1,R))
  \]
  for two positive real numbers $\varepsilon$ and $R$, then
  \[
  \ObsDiam^{2R}(\cP;-(\kappa+\varepsilon))
  \le \ObsDiam^{2R}(\cP';-\kappa) + 2\varepsilon
  \]
  for any $\kappa > 0$.
\end{lem}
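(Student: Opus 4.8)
The plan is to transport the hypothesis, which lives on the $1$-dimensional bounded measurement $\cM(\,\cdot\,;1,R)$, onto the observable diameter by way of the partial-diameter comparison in Lemma \ref{lem:dP-diam}. The whole difficulty is concentrated in relating $\ObsDiam^{2R}(\cP;-\kappa)$ to the quantity $\sup_{\mu\in\cM(\cP;1,R)}\diam(\mu;1-\kappa)$; once that bridge is in place the propagation step is immediate.

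For the propagation step, fix $\mu\in\cM(\cP;1,R)$. The hypothesis $\cM(\cP;1,R)\subset U_\varepsilon(\cM(\cP';1,R))$ gives some $\nu\in\cM(\cP';1,R)$ with $d_P(\mu,\nu)<\varepsilon$, and Lemma \ref{lem:dP-diam} yields $\diam(\mu;1-(\kappa+\varepsilon))\le\diam(\nu;1-\kappa)+2\varepsilon$. Taking the supremum over all such $\mu$ (and, for each, its associated $\nu$) turns this into $\sup_{\mu\in\cM(\cP;1,R)}\diam(\mu;1-(\kappa+\varepsilon))\le\sup_{\nu\in\cM(\cP';1,R)}\diam(\nu;1-\kappa)+2\varepsilon$. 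Here I would use that each $\nu\in\cM(\cP';1,R)$ is supported in $[-R,R]$, so $\diam(\nu;1-\kappa)\le 2R$ automatically, which is exactly why the right-hand side should reproduce the \emph{truncated} diameter $\ObsDiam^{2R}(\cP';-\kappa)$ rather than the untruncated one. Applying the bridge to both sides then gives $\ObsDiam^{2R}(\cP;-(\kappa+\varepsilon))\le\ObsDiam^{2R}(\cP';-\kappa)+2\varepsilon$.

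To set up the bridge I would start from the identity $\bigcup_{X\in\cP}\cM(X;1,R)=\cM(\cP;1,R)$ and from the fact that a $1$-Lipschitz $f:X\to\R$ may be post-composed with the nearest-point projection of $\R$ onto $[-R,R]$, which is $1$-Lipschitz, to produce an element of $\cM(X;1,R)\subset\cM(\cP;1,R)$. Together with Proposition \ref{prop:ObsDiam-dom} and the right-continuity in $\kappa$ built into Definition \ref{defn:ObsDiam-pyramid}, this should let me compare $\sup_{\mu\in\cM(\cP;1,R)}\diam(\mu;1-\kappa)$ with $\ObsDiam^{2R}(\cP;-\kappa)$.

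The hard part is exactly this last comparison. The observable diameter is a supremum over $1$-Lipschitz functions into all of $\R$, whose pushforwards may have unbounded support, whereas the bounded measurement only records functions valued in $[-R,R]$; clamping an optimal function to the box concentrates the exterior mass at the two endpoints $\pm R$, and such boundary atoms can strictly \emph{decrease} the partial diameter, so one cannot simply replace an optimal function by its clamp at the same mass level. I expect the resolution to be that the truncation at $2R$ together with the $\varepsilon$-shift in the mass parameter absorbs this boundary effect: one chooses the clamping window to contain a capturing interval so that the mass clamped onto each endpoint is controlled, and treats separately the regime in which the truncation is active (where both endpoint atoms are heavy and therefore \emph{force} the partial diameter up to the maximal value $2R$, so nothing is lost). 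Reconciling the mass-level bookkeeping produced by the clamped mass with the right-continuity of $\ObsDiam(\cP;-\,\cdot\,)$ is where the estimate must be handled with care.
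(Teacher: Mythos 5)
Your division of the argument into a ``propagation'' step and a ``bridge'' is exactly the structure of the paper's own proof, and your propagation step is correct: it is literally the paper's final inequality, obtained by applying Lemma~\ref{lem:dP-diam} to each $\mu\in\cM(\cP;1,R)$ and a $d_P$-close $\nu\in\cM(\cP';1,R)$. The paper, however, simply \emph{asserts} your bridge as an equality (the second equality in its displayed chain, identifying $\ObsDiam^{2R}(\cP;-(\kappa+\varepsilon))$ with $\lim_{\delta\to 0+}\sup_{\mu\in\cM(\cP;1,R)}\diam(\mu;1-(\kappa+\varepsilon+\delta))$) with no justification. You were right to isolate this as the hard part, but the difficulty is fatal: the bridge is false, and no choice of clamping window or $\varepsilon$-shift of the mass parameter can repair it. Let $X=\{0,4R,8R\}\subset\R$ with the uniform probability measure and $\cP=\cP_X$. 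For $\tfrac{1}{3}\le\kappa<\tfrac{2}{3}$, a Borel set of mass $\ge 1-\kappa$ must contain at least two of the three atoms, so $\ObsDiam(X;-\kappa)=4R$ and hence $\ObsDiam^{2R}(\cP;-\kappa)=2R$ by the consistency proposition following Definition~\ref{defn:ObsDiam-pyramid}. On the other hand, any $1$-Lipschitz $F:X\to[-R,R]$ puts the three atoms into an interval of length $2R$, hence two of them at mutual distance $\le R$, and that pair carries mass $\tfrac{2}{3}\ge 1-\kappa$; so \emph{every} $\mu\in\cM(\cP;1,R)$ satisfies $\diam(\mu;1-\kappa)\le R$. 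The two sides of the bridge are $2R$ and $R$ uniformly in $\kappa\in[\tfrac{1}{3},\tfrac{2}{3})$, so the $\delta$- and $\varepsilon$-shifts gain nothing. (Your translate-and-clamp maps are monotone and do even worse; the obstruction above applies to arbitrary $1$-Lipschitz maps into $[-R,R]$, and with $k$ far-apart atoms instead of $3$ the ratio becomes $k-1$, so the defect is unbounded as $\kappa\to 1$.)

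In fact Lemma~\ref{lem:ObsDiam-2R} itself is false, so no blind proof attempt could have succeeded. Keep $\cP=\cP_X$ as above and let $\cP'=\cP_Z$, where $Z$ is the three-point space with all mutual distances equal to $2R$ and uniform measure. Every map from three points into $[-R,R]$ is automatically $1$-Lipschitz both on $X$ and on $Z$, so $\cM(\cP;1,R)=\cM(\cP';1,R)=\{\,\tfrac{1}{3}(\delta_a+\delta_b+\delta_c)\mid a,b,c\in[-R,R]\,\}$, where $\delta_p$ denotes the Dirac measure at $p$; thus the hypothesis of the lemma holds for \emph{every} $\varepsilon>0$. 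Yet $\ObsDiam^{2R}(\cP;-(\kappa+\varepsilon))=2R$ while $\ObsDiam^{2R}(\cP';-\kappa)=R$ whenever $\tfrac{1}{3}\le\kappa<\kappa+\varepsilon<\tfrac{2}{3}$, contradicting the conclusion for all $\varepsilon<R/2$. What your propagation step genuinely proves is the lemma with $\ObsDiam^{2R}(\,\cdot\,;-\kappa)$ replaced throughout by the invariant $\lim_{\delta\to 0+}\sup_{\mu\in\cM(\,\cdot\,;1,R)}\diam(\mu;1-(\kappa+\delta))$, i.e.\ the observable diameter computed only over $[-R,R]$-valued $1$-Lipschitz functions; the example shows this invariant is genuinely smaller than $\ObsDiam^{2R}$. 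Consequently the proof of Theorem~\ref{thm:lim-ObsDiam} also needs rebuilding, either around that weaker invariant (with a more careful passage $R\to\infty$) or around the higher measurements $\cM(\,\cdot\,;N,R)$, which, unlike the $N=1$ measurement, do distinguish $\cP_X$ from $\cP_Z$ (already at $N=2$).
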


\begin{proof}
  By Lemma \ref{lem:dP-diam}, we have
  \begin{align*}
    &\ObsDiam^{2R}(\cP;-(\kappa+\varepsilon))\\
    &= \lim_{\delta\to 0+} \sup_{X \in \cP,\; f \in \Lip_1(X)}
    \diam^{2R}(f_*\mu_X;-(\kappa+\varepsilon+\delta))\\
    &= \lim_{\delta\to 0+} \sup_{X \in \cP,\; f \in \Lip_1(X),\; f(X) \subset [\,-R,R\,]}
    \diam(f_*\mu_X;-(\kappa+\varepsilon+\delta))\\
    &= \lim_{\delta\to 0+} \sup_{\mu \in \cM(\cP;1,R)}
    \diam(\mu;-(\kappa+\varepsilon+\delta))\\
    &\le \lim_{\delta\to 0+} \sup_{\mu' \in \cM(\cP';1,R)}
    \diam(\mu';-(\kappa+\delta)) + 2\varepsilon\\
    &= \ObsDiam^{2R}(\cP';-\kappa) + 2\varepsilon.
  \end{align*}
  This completes the proof.
\end{proof}

\begin{cor} \label{cor:ObsDiam-2R}
  Let $\cP$ and $\cP'$ be two pyramids.
  If $\rho_R(\cP,\cP') < \varepsilon/4$ for two real numbers $\varepsilon, R > 0$, then
  \[
  \ObsDiam^{2R}(\cP;-(\kappa+\varepsilon))
  \le \ObsDiam^{2R}(\cP';-\kappa) + 2\varepsilon
  \]
  for any $\kappa > 0$.
\end{cor}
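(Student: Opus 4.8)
The plan is to reduce the corollary to Lemma \ref{lem:ObsDiam-2R} by converting the hypothesis on $\rho_R$ into the one-sided inclusion of $1$-measurements that this lemma requires. The only quantitative ingredient is Theorem \ref{thm:rhoR}(2), specialized to $N=1$.

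First I would invoke Theorem \ref{thm:rhoR}(2) with $N=1$. Since $N\,2^{N+1} = 1\cdot 2^{2} = 4$ and $NR = R$, this gives
\[
d_H(\cM(\cP;1,R),\cM(\cP';1,R)) \le 4\,\rho_R(\cP,\cP'),
\]
where $d_H$ is taken with respect to the Prokhorov metric $d_P$. Substituting the hypothesis $\rho_R(\cP,\cP') < \varepsilon/4$, the right-hand side is strictly less than $\varepsilon$, so the two $1$-measurements lie at Hausdorff distance $< \varepsilon$. This is precisely why the threshold in the statement is $\varepsilon/4$: it is the reciprocal of the constant $4$ produced by Theorem \ref{thm:rhoR}(2) at $N=1$.

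Next I would read off the inclusion needed by Lemma \ref{lem:ObsDiam-2R}. By definition of the Hausdorff distance, $d_H(\cM(\cP;1,R),\cM(\cP';1,R)) < \varepsilon$ forces in particular
\[
\cM(\cP;1,R) \subset U_\varepsilon(\cM(\cP';1,R)),
\]
with the $\varepsilon$-neighborhood taken in $(\cM(1),d_P)$. Only this one inclusion is used; the reverse inclusion, which also holds, is irrelevant to the asymmetric estimate we are after. Applying Lemma \ref{lem:ObsDiam-2R} with these $\varepsilon$ and $R$ then yields
\[
\ObsDiam^{2R}(\cP;-(\kappa+\varepsilon)) \le \ObsDiam^{2R}(\cP';-\kappa) + 2\varepsilon
\]
for every $\kappa > 0$, which is exactly the asserted inequality.

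Since each nontrivial step has already been done—the passage from $\rho_R$ to a Hausdorff bound in Theorem \ref{thm:rhoR}(2), and the passage from a measurement inclusion to the truncated observable-diameter estimate in Lemma \ref{lem:ObsDiam-2R}—there is no genuine obstacle here; the corollary is a bookkeeping combination of the two. The only thing to keep straight is the constant matching, namely that $4\cdot(\varepsilon/4)=\varepsilon$ so that the conclusion of the lemma applies with the same $\varepsilon$ appearing in the shift $\kappa+\varepsilon$ and in the additive error $2\varepsilon$.
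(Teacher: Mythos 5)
Your proposal is correct and follows exactly the route of the paper's own (one-line) proof: apply Theorem \ref{thm:rhoR}(2) with $N=1$ to turn the hypothesis $\rho_R(\cP,\cP') < \varepsilon/4$ into $d_H(\cM(\cP;1,R),\cM(\cP';1,R)) < \varepsilon$, extract the inclusion $\cM(\cP;1,R) \subset U_\varepsilon(\cM(\cP';1,R))$, and invoke Lemma \ref{lem:ObsDiam-2R}. The constant matching ($4 \cdot \varepsilon/4 = \varepsilon$) is exactly as the paper intends, so nothing is missing.
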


\begin{proof}
  The corollary follows from Theorem \ref{thm:rhoR}(2)
  and Lemma \ref{lem:ObsDiam-2R}.
\end{proof}

\begin{thm}[Limit formula for observable diameter] \label{thm:lim-ObsDiam}
  Let $\cP$ and $\cP_n$, $n=1,2,\dots$, be pyramids.
  If $\cP_n$ converges weakly to $\cP$ as $n\to\infty$, then
  \begin{align*}
    \ObsDiam(\cP;-\kappa)
    &= \lim_{\varepsilon\to 0+} \liminf_{n\to\infty}
    \ObsDiam(\cP_n;-(\kappa+\varepsilon)) \\
    &= \lim_{\varepsilon\to 0+} \limsup_{n\to\infty}
    \ObsDiam(\cP_n;-(\kappa+\varepsilon))
  \end{align*}
  for any $\kappa > 0$.
\end{thm}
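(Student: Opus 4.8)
The plan is to sandwich $\ObsDiam(\cP;-\kappa)$ between the two $\varepsilon$-limits via the chain
\[
\lim_{\varepsilon\to0+}\limsup_{n\to\infty}\ObsDiam(\cP_n;-(\kappa+\varepsilon))
\le \ObsDiam(\cP;-\kappa)
\le \lim_{\varepsilon\to0+}\liminf_{n\to\infty}\ObsDiam(\cP_n;-(\kappa+\varepsilon)),
\]
after which the trivial bound $\liminf\le\limsup$ forces all three quantities to coincide. First I note that the $\varepsilon$-limits exist: $\ObsDiam(\cP_n;-(\kappa+\varepsilon))$ is monotone nonincreasing in $\varepsilon$, hence so are the inner $\liminf_n$ and $\limsup_n$, and their limits as $\varepsilon\to0+$ are (possibly infinite) suprema. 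Since $\rho_R$ metrizes weak convergence for each $R>0$ by Theorem \ref{thm:rhoR}(1), the hypothesis gives $\rho_R(\cP_n,\cP)\to0$ as $n\to\infty$ for every $R>0$, which is precisely what lets me invoke Corollary \ref{cor:ObsDiam-2R} for all large $n$.

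For the upper bound, fix $\varepsilon,R>0$. For all large $n$ we have $\rho_R(\cP_n,\cP)<\varepsilon/4$, so Corollary \ref{cor:ObsDiam-2R} applied to the pair $(\cP_n,\cP)$ gives $\ObsDiam^{2R}(\cP_n;-(\kappa+\varepsilon))\le\ObsDiam^{2R}(\cP;-\kappa)+2\varepsilon\le\ObsDiam(\cP;-\kappa)+2\varepsilon$. Taking $\limsup_{n\to\infty}$ and using that $x\mapsto\min\{x,2R\}$ is continuous and nondecreasing on the extended reals (so that it commutes with $\limsup$), I obtain $\min\{\limsup_n\ObsDiam(\cP_n;-(\kappa+\varepsilon)),2R\}\le\ObsDiam(\cP;-\kappa)+2\varepsilon$; letting $R\to\infty$ strips the truncation, and then letting $\varepsilon\to0+$ yields the desired upper bound. (Alternatively one may split off the trivial case $\ObsDiam(\cP;-\kappa)=+\infty$ and, when it is finite, choose $R$ with $2R>\ObsDiam(\cP;-\kappa)+2\varepsilon$ to remove the min directly.)

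For the lower bound I apply the Corollary in the reverse direction, introducing an auxiliary parameter $\eta>0$ to absorb the $\kappa$-shift. For large $n$ we have $\rho_R(\cP,\cP_n)<\eta/4$, so Corollary \ref{cor:ObsDiam-2R} applied to $(\cP,\cP_n)$ with $(\kappa+\varepsilon,\eta)$ in place of $(\kappa,\varepsilon)$ gives $\ObsDiam^{2R}(\cP;-(\kappa+\varepsilon+\eta))\le\ObsDiam^{2R}(\cP_n;-(\kappa+\varepsilon))+2\eta\le\ObsDiam(\cP_n;-(\kappa+\varepsilon))+2\eta$. As the left-hand side is independent of $n$, taking $\liminf_{n\to\infty}$ gives $\ObsDiam^{2R}(\cP;-(\kappa+\varepsilon+\eta))\le\liminf_n\ObsDiam(\cP_n;-(\kappa+\varepsilon))+2\eta$. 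I then let $\eta\to0+$, using right-continuity of $\ObsDiam(\cP;-\kappa)$ in $\kappa$ (hence of its truncation $\ObsDiam^{2R}(\cP;-\kappa)=\min\{\ObsDiam(\cP;-\kappa),2R\}$), and afterwards let $R\to\infty$ to reach $\ObsDiam(\cP;-(\kappa+\varepsilon))\le\liminf_n\ObsDiam(\cP_n;-(\kappa+\varepsilon))$; finally letting $\varepsilon\to0+$ and invoking right-continuity once more on the left gives $\ObsDiam(\cP;-\kappa)\le\lim_{\varepsilon\to0+}\liminf_n\ObsDiam(\cP_n;-(\kappa+\varepsilon))$.

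The main obstacle is the bookkeeping of the truncation level $2R$ against the four limit operations. Corollary \ref{cor:ObsDiam-2R} controls only the truncated observable diameters $\ObsDiam^{2R}$ and only up to a $\kappa$-shift, so the order in which $n\to\infty$, $\eta\to0+$, $R\to\infty$, and $\varepsilon\to0+$ are taken must be chosen so that each passage is legitimate: in the lower bound the $\eta$-shift has to be cleared by right-continuity \emph{before} sending $R\to\infty$, and in the upper bound the min must be passed through the $\limsup$ \emph{before} $R\to\infty$. Once these steps are correctly sequenced the remaining estimates are routine, and combining the two bounds with $\liminf\le\limsup$ completes the proof.
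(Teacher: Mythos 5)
Your proof is correct and follows essentially the same route as the paper: both rest on applying Corollary \ref{cor:ObsDiam-2R} in the two directions (via the metric $\rho_R$ from Theorem \ref{thm:rhoR}) to get two-sided estimates on the truncated quantities $\ObsDiam^{2R}$, then clear the $\kappa$-shift by right-continuity and remove the truncation by letting $R\to\infty$. The only difference is cosmetic bookkeeping --- you decouple the shift parameter $\eta$ from $\varepsilon$ and make the order of the limits $n\to\infty$, $\eta\to 0+$, $R\to\infty$, $\varepsilon\to 0+$ explicit, where the paper sets $\eta=\varepsilon$ and keeps everything truncated until the final line.
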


\begin{proof}
  Let $\kappa, R > 0$ be any two fixed numbers.
  For any real number $\varepsilon$ with $0 < \varepsilon < \kappa$,
  there is a number $n_0$ such that
  $\rho_R(\cP_n,\cP) < \varepsilon/2$ for any $n \ge n_0$.
  Let $n \ge n_0$.
  Corollary \ref{cor:ObsDiam-2R} implies
  \begin{align*}
   &\ObsDiam^{2R}(\cP_n;-(\kappa+\varepsilon))
   \le \ObsDiam^{2R}(\cP;-\kappa) + 2\varepsilon,\\
   &\ObsDiam^{2R}(\cP;-(\kappa+2\varepsilon))
   \le \ObsDiam^{2R}(\cP_n;-(\kappa+\varepsilon)) + 2\varepsilon.
  \end{align*}
  Taking the limits of these two inequalities as $n\to\infty$
  and then $\varepsilon \to 0+$, we have
  \begin{align*}
    \ObsDiam^{2R}(\cP;-\kappa)
    &= \lim_{\varepsilon\to 0+} \liminf_{n\to\infty}
    \ObsDiam^{2R}(\cP_n;-(\kappa+\varepsilon)) \\
    &= \lim_{\varepsilon\to 0+} \limsup_{n\to\infty}
    \ObsDiam^{2R}(\cP_n;-(\kappa+\varepsilon)).
  \end{align*}
  Since this holds for any $R > 0$,
  the proof is completed.
\end{proof}

\begin{ex}
  In our previous paper \cite{Sy:mmlim}*{Corollary 5.8},
  we obtain
  \[
  \lim_{n\to\infty} \ObsDiam(S^n(r_n);-\kappa)
  = \diam(\gamma_{\lambda^2}^1;1-\kappa)
  = 2\lambda I^{-1}((1-\kappa)/2)
  \]
  for any sequence of positive real numbers $r_n$, $n=1,2,\dots$,
  with $r_n/\sqrt{n} \to \lambda$,
  and for any $\kappa$ with $0 < \kappa < 1$, where
  $\gamma_{\lambda^2}^1$ denotes the one-dimensional
  centered Guassian measure on $\R$ with variance $\lambda^2$
  and $I(r) := \gamma^1[\,0,r\,]$ for $r \ge 0$.
  Since $\cP_{S^n(r_n)}$ converges weakly to the virtual infinite-dimensional
  Gaussian space $\Gamma_{\lambda^2}^\infty$ with variance $\lambda^2$
  (see \cite{Sy:mmlim}),
  we have, by Theorem \ref{thm:lim-ObsDiam},
  \[
  \ObsDiam(\cP_{\Gamma_{\lambda^2}^\infty};-\kappa)
  = \diam(\gamma_{\lambda^2}^1;1-\kappa)
  = 2\lambda I^{-1}((1-\kappa)/2)
  \]
  for any $\kappa$ and $\lambda$ with $0 < \kappa < 1$ and $\lambda \ge 0$.
\end{ex}

\section{Separation Distance for Pyramid} \label{sec:Sep}

\begin{lem} \label{lem:lconti-Sep}
  Let $X$ be an mm-space.
  Then we have
  \[
  \lim_{\delta\to 0+} \Sep(X; \kappa_0-\delta, \kappa_1-\delta, \dots,\kappa_N-\delta)
  = \Sep(X; \kappa_0, \kappa_1, \ldots, \kappa_N)
  \]
  for any $\kappa_0,\kappa_1,\dots,\kappa_N > 0$ with $N \ge 1$.
\end{lem}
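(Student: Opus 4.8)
The plan is to prove the two inequalities between $S := \lim_{\delta \to 0+}\Sep(X;\kappa_0 - \delta, \ldots, \kappa_N - \delta)$ and $s := \Sep(X;\kappa_0,\ldots,\kappa_N)$ separately. The inequality $S \ge s$ is immediate: since $\Sep$ is monotone nonincreasing in each argument, each term $\Sep(X;\kappa_0-\delta,\ldots,\kappa_N-\delta)$ with $\delta>0$ is $\ge s$; as this quantity is monotone in $\delta$ the limit exists and equals $\inf_{\delta>0}\Sep(X;\kappa_0-\delta,\ldots,\kappa_N-\delta)\ge s$. The whole content lies in the reverse inequality $S\le s$. If some $\kappa_i>1$, then $\kappa_i-\delta>1$ for all small $\delta$, no admissible family exists, and both sides vanish; so I may assume all $\kappa_i\le 1$ and (discarding the trivial case $S=0$) that $S>0$.

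For the reverse inequality I would mimic the compactness argument in the proof of Lemma \ref{lem:rconti-ObsDiam}(1). Fix a sequence $\delta_m\downarrow 0$. Since $\Sep(X;\kappa_0-\delta_m,\ldots,\kappa_N-\delta_m)\ge S$, choose by the definition of the supremum Borel sets $A_0^{(m)},\ldots,A_N^{(m)}\subset X$, which may be taken closed (replacing each by its closure changes neither the measure bound nor the mutual distances), with $\mu_X(A_i^{(m)})\ge\kappa_i-\delta_m$ and $\min_{i\ne j}d_X(A_i^{(m)},A_j^{(m)})\ge S-\delta_m$. By inner regularity pick an increasing sequence of compact sets $K_p\subset X$ with $\mu_X(K_p)>1-\eta_p$, $\eta_p\downarrow 0$, so that $\mu_X(A_i^{(m)}\cap K_p)>\kappa_i-\delta_m-\eta_p$. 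Since the closed subsets of each compact $K_p$ form a Hausdorff-compact space, a diagonal argument over the countably many pairs $(i,p)$ yields a subsequence $\{m(l)\}$ along which $A_i^{(m(l))}\cap K_p$ Hausdorff-converges to a compact set $A_i^p\subset K_p$ for every $i$ and $p$; the nesting $A_i^{(m)}\cap K_p\subset A_i^{(m)}\cap K_{p+1}$ passes to the limit to give $A_i^p\subset A_i^{p+1}$. Set $A_i:=\bigcup_p A_i^p$, a Borel ($\sigma$-compact) set.

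Now I would verify the two required properties of the $A_i$. For the measure, upper semicontinuity of $\mu_X$ under Hausdorff convergence of compact sets (if $C_l\to C$ then eventually $C_l\subset U_\varepsilon(C)$, whence $\limsup_l\mu_X(C_l)\le\mu_X(C)$ after letting $\varepsilon\to 0$) gives $\mu_X(A_i^p)\ge\limsup_l\mu_X(A_i^{(m(l))}\cap K_p)\ge\kappa_i-\eta_p$, and continuity from below together with $\eta_p\downarrow 0$ yields $\mu_X(A_i)=\lim_p\mu_X(A_i^p)\ge\kappa_i$. For the distances, upper semicontinuity of the set-distance under Hausdorff convergence (choosing minimizers $x\in C$, $y\in D$ and approximating points $x_l\in C_l$, $y_l\in D_l$ shows $\limsup_l d_X(C_l,D_l)\le d_X(C,D)$) gives, for $i\ne j$,
\[
d_X(A_i^p,A_j^p)\ \ge\ \limsup_{l} d_X(A_i^{(m(l))}\cap K_p,\, A_j^{(m(l))}\cap K_p)\ \ge\ \limsup_l(S-\delta_{m(l)})\ =\ S .
\]
Because the families are nested in $p$, any $x\in A_i$ and $y\in A_j$ lie in a common level $A_i^r,A_j^r$ with $r=\max(p,q)$, so $d_X(x,y)\ge d_X(A_i^r,A_j^r)\ge S$; hence $\min_{i\ne j}d_X(A_i,A_j)\ge S$. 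Thus $A_0,\ldots,A_N$ is an admissible family witnessing $s\ge S$, which closes the argument.

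I expect the main obstacle to be organizing the passage to the limit so that both lower bounds survive simultaneously. The delicate point is that I need the measure to be upper semicontinuous (to recover a lower bound on $\mu_X(A_i^p)$ from the lower bounds on $\mu_X(A_i^{(m)}\cap K_p)$, with the $\eta_p$-truncation loss regained by the union over $p$) and, at the same time, the set-distance to be upper semicontinuous in the favorable direction; checking that the distance bound is not destroyed by the countable union over $p$ is exactly where the nesting $A_i^p\subset A_i^{p+1}$ is essential. The diagonal extraction over both indices $i$ and $p$ is routine but must be set up before taking any limits.
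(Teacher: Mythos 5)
Your proof is correct and takes essentially the same route as the paper's: the trivial inequality by monotonicity, and for the converse an approximating family of closed separating sets, intersection with a compact exhaustion $\{K_p\}$ obtained from inner regularity, a diagonal Hausdorff-compactness extraction over the pairs $(i,p)$, and nested unions $A_i=\bigcup_p A_i^p$ that recover both the measure bound $\mu_X(A_i)\ge\kappa_i$ and the distance bound. The only difference is that you make explicit the semicontinuity facts (of measure and of set-distance under Hausdorff convergence) that the paper leaves implicit.
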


\begin{proof}
  Let $\{\delta_n\}_{n=1}^{\infty}$ be a monotone decreasing sequence
  of positive real numbers converging to zero.
  Then, $\Sep(X; \kappa_0-\delta_n,\dots, \kappa_N-\delta_n)$
  is monotone nonincreasing in $n$.  We set
  \[
  \beta := \lim_{n \to \infty} \Sep(X; \kappa_0-\delta_n,\dots, \kappa_N-\delta_n).
  \]
  Since $\Sep(X; \kappa_0-\delta_n,\dots,\kappa_N-\delta_n) \ge
  \Sep(X; \kappa_0,\dots, \kappa_N)$, we have
  $\beta \ge \Sep(X; \kappa_0, \kappa_1, \ldots, \kappa_N)$. 
  It suffices to prove $\Sep(X; \kappa_0,\dots, \kappa_N) \ge \beta$.
  It follows from the definition of $\beta$ that
  there are Borel subsets $A_0^n, A_1^n, \ldots, A_N^n \subset X$
  such that $\mu_{X}(A_i^n) \ge \kappa_i - \delta_n$ for any $n$ and
  $i=0, 1, \dots, N$,
  and
  \[
  \lim_{n\to\infty} \min_{i \neq j}d_{X}(A_i^n, A_j^n) = \beta.
  \]
  We may assume that each $A_i^n$ is a closed set.
  Take a monotone decreasing sequence $\{ \eta_p \}_{p=1}^{\infty}$
  of positive real numbers converging to zero.
  By the inner regularity of $\mu_{X}$, there is a monotone nondecreasing
  sequence of compact subsets $K_p \subset X$, $p=1,2,\dots$, such that
  $\mu_{X}(K_p) > 1-\eta_p$ for any $p$.
  Set
  \[
  A_{i, p}^n := A_i^n \cap K_p.
  \]
  Each $A_{i, p}^n$ is a compact set and satisfies
  $\mu_{X}(A_{i, p}^n) > \kappa_i - \delta_n -\eta_p > 0$.
  For each $i$ and each $p$, the sequence $\{A_{i, p}^n\}_n$ has
  a Hausdorff convergent subsequence.
  By a diagonal argument, there is a common subsequence $\{n(m)\}$ of $\{n\}$
  such that $A_{i, p}^{n(m)}$ Hausdorff converges to a compact subset of $X$,
  say $A_{i,p}$, for any $i$ and $p$.
  $A_{i,p}$ is monotone nondecreasing in $p$ and satisfies
  $\mu_{X}(A_{i, p}) \ge \kappa_i -  \eta_p$.
  Setting
  \[
  A_i := \bigcup_{p=1}^{\infty} A_{i, p},
  \]
  we have $\mu_{X}(A_i) \ge \kappa_i$.
  Since
  \[
  \min_{i \neq j} d_X(A_i^{n(m)}, A_j^{n(m)})
  \le \min_{i \neq j} d_X(A_{i, p}^{n(m)}, A_{j, p}^{n(m)})
  \]
  we obtain
  \[
  \beta \leq \min_{i \neq j}d_{X}(A_i, A_j) \leq \Sep(X; \kappa_0, \dots, \kappa_N).
  \]
  This completes the proof.
\end{proof}

\begin{rem}
  Lemma \ref{lem:lconti-Sep}
  and the monotonicity of $\Sep(X;\kappa_0,\dots,\kappa_N)$ in $\kappa_i$
  together imply that
  $\Sep(X;\kappa_0-\delta_0,\dots,\kappa_N-\delta_N)$
  converges to $\Sep(X;\kappa_0,\dots,\kappa_N)$
  as $\delta_0,\dots,\delta_N \to 0+$.
\end{rem}

\begin{defn}[Separation distance of pyramid] \label{defn:Sep-pyramid}
  For a pyramid $\cP$ and $\kappa_0,\dots,\kappa_N > 0$, we define
  \begin{align*}
    \Sep(\cP;\kappa_0,\kappa_1,\dots,\kappa_N)
    &:= \lim_{\delta\to 0+} \sup_{X \in \cP} \Sep(X;\kappa_0-\delta,\kappa_1-\delta,
    \dots,\kappa_N-\delta) \\
    (&\le +\infty).
  \end{align*}
\end{defn}

$\Sep(\cP;\kappa_0,\kappa_1,\dots,\kappa_N)$ is left-continuous
and monotone nonincreasing in $\kappa_0,\dots,\kappa_N$.

\begin{prop}
  For any mm-space $X$ we have
  \[
  \Sep(\cP_X;\kappa_0,\kappa_1,\dots,\kappa_N)
  = \Sep(X;\kappa_0,\kappa_1,\dots,\kappa_N)
  \]
  for any $\kappa_0,\dots,\kappa_N > 0$.
\end{prop}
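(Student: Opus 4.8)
The plan is to mirror the argument that establishes the analogous consistency statement for the observable diameter, replacing the two ingredients used there by their separation-distance counterparts. First I would unfold the definition of the separation distance of a pyramid applied to $\cP_X$:
\[
\Sep(\cP_X;\kappa_0,\dots,\kappa_N)
= \lim_{\delta\to 0+}\ \sup_{Y\in\cP_X}\ \Sep(Y;\kappa_0-\delta,\dots,\kappa_N-\delta),
\]
where $Y$ ranges over all mm-spaces with $Y\prec X$.

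The heart of the proof is to evaluate the inner supremum for each fixed small $\delta>0$. On the one hand, the identity map on $X$ is $1$-Lipschitz and pushes $\mu_X$ forward to itself, so $X\prec X$ and hence $X\in\cP_X$; this gives the lower bound $\sup_{Y\in\cP_X}\Sep(Y;\kappa_0-\delta,\dots,\kappa_N-\delta)\ge \Sep(X;\kappa_0-\delta,\dots,\kappa_N-\delta)$. On the other hand, Proposition \ref{prop:Sep-prec} (monotonicity of the separation distance under the Lipschitz order) shows that every $Y\in\cP_X$ satisfies $\Sep(Y;\kappa_0-\delta,\dots,\kappa_N-\delta)\le\Sep(X;\kappa_0-\delta,\dots,\kappa_N-\delta)$, which yields the matching upper bound. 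Therefore the supremum is attained at $X$ itself and equals $\Sep(X;\kappa_0-\delta,\dots,\kappa_N-\delta)$.

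Substituting this back, the expression reduces to
\[
\Sep(\cP_X;\kappa_0,\dots,\kappa_N)
= \lim_{\delta\to 0+}\Sep(X;\kappa_0-\delta,\dots,\kappa_N-\delta),
\]
and the final step invokes Lemma \ref{lem:lconti-Sep}, the left-continuity of $\Sep(X;-)$ in its mass parameters, to identify this limit with $\Sep(X;\kappa_0,\dots,\kappa_N)$, completing the proof.

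There is no serious obstacle here; the statement is a routine consistency check, and the only point carrying content is the exact matching of ingredients: the role played by Proposition \ref{prop:ObsDiam-dom} and Lemma \ref{lem:rconti-ObsDiam} in the observable-diameter case is taken over here by Proposition \ref{prop:Sep-prec} and Lemma \ref{lem:lconti-Sep}. The one subtlety to watch is that the definition of $\Sep(\cP_X;-)$ uses a limit $\delta\to0+$ that \emph{shrinks} the mass thresholds, so the correct continuity statement to apply is \emph{left}-continuity (Lemma \ref{lem:lconti-Sep}), in contrast to the right-continuity used for the observable diameter; getting this pairing right is the whole substance of the argument.
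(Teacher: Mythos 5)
Your proof is correct and is exactly the argument the paper intends: its one-line proof cites precisely your two ingredients, Proposition \ref{prop:Sep-prec} (monotonicity under the Lipschitz order, giving the supremum over $\cP_X$ as the value at $X$ itself) and Lemma \ref{lem:lconti-Sep} (left-continuity in the mass parameters, handling the limit $\delta\to 0+$). Your expanded write-up, including the remark about pairing left-continuity with the shrinking thresholds, is a faithful filling-in of the paper's proof.
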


\begin{proof}
  The proposition follows from
  Proposition \ref{prop:Sep-prec} and Lemma \ref{lem:lconti-Sep}.
\end{proof}

The following is obvious.

\begin{prop} \label{prop:Sep-scale}
  Let $\cP$ be a pyramid.
  Then we have
  \[
  \Sep(t\cP;\kappa_0,\kappa_1,\dots,\kappa_N)
  = t \Sep(\cP;\kappa_0,\kappa_1,\dots,\kappa_N)
  \]
  for any $t,\kappa_0,\kappa_1,\dots,\kappa_N > 0$.
\end{prop}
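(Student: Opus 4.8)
The plan is to reduce everything to the already-established scaling identity for a single mm-space, namely $\Sep(tX;\kappa_0,\dots,\kappa_N) = t\,\Sep(X;\kappa_0,\dots,\kappa_N)$, and then to unwind Definition \ref{defn:Sep-pyramid}. First I would observe that, since $t>0$ is fixed, the assignment $X \mapsto tX$ is a bijection from $\cP$ onto $t\cP$; consequently every supremum indexed by $Y\in t\cP$ can be rewritten as a supremum indexed by $X\in\cP$ simply by the substitution $Y=tX$.

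With this reindexing in hand, the computation proceeds by directly applying the definition of the separation distance of a pyramid and then inserting the mm-space scaling formula inside the supremum:
\begin{align*}
  \Sep(t\cP;\kappa_0,\dots,\kappa_N)
  &= \lim_{\delta\to 0+} \sup_{Y \in t\cP}
     \Sep(Y;\kappa_0-\delta,\dots,\kappa_N-\delta)\\
  &= \lim_{\delta\to 0+} \sup_{X \in \cP}
     \Sep(tX;\kappa_0-\delta,\dots,\kappa_N-\delta)\\
  &= \lim_{\delta\to 0+} \sup_{X \in \cP}
     t\,\Sep(X;\kappa_0-\delta,\dots,\kappa_N-\delta).
\end{align*}
Here the first equality is Definition \ref{defn:Sep-pyramid}, the second is the bijective reindexing, and the third is Proposition stated for mm-spaces.

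The final step is to pull the positive constant $t$ out of both the supremum and the limit. Since $t>0$, multiplication by $t$ is an order-preserving bijection of $[0,+\infty]$, so it commutes with $\sup$; and it likewise commutes with the monotone limit $\lim_{\delta\to 0+}$ (the inner supremum is monotone in $\delta$, as remarked after Definition \ref{defn:Sep-pyramid}, so the limit exists and scaling passes through). This yields
\[
  \Sep(t\cP;\kappa_0,\dots,\kappa_N)
  = t\,\lim_{\delta\to 0+} \sup_{X \in \cP}
     \Sep(X;\kappa_0-\delta,\dots,\kappa_N-\delta)
  = t\,\Sep(\cP;\kappa_0,\dots,\kappa_N),
\]
which is the desired identity. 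There is essentially no obstacle in this argument; the only point meriting care is the positivity of $t$, which guarantees both that $X\mapsto tX$ is the required bijection and that the scalar factor commutes with the extended-real supremum and limit. This is precisely why the statement is labeled obvious, and the proof is just the bookkeeping above.
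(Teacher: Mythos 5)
Your proof is correct, and it is exactly the argument the paper has in mind: the paper gives no written proof (it labels the proposition as obvious), and the intended justification is precisely your bookkeeping — reindex the supremum via the bijection $X \mapsto tX$, apply the mm-space scaling identity, and pull the positive constant $t$ through the supremum and the monotone limit.
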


\begin{defn}[$\Sep^D$]
  For a pyramid $\cP$ and for positive real numbers $\kappa_0,\dots,\kappa_N$,
  and $D$,
  we set
  \[
  \Sep^D(\cP;\kappa_0,\kappa_1,\dots,\kappa_N)
  := \min\{\;\Sep(\cP;\kappa_0,\kappa_1,\dots,\kappa_N),D\;\}.
  \]
\end{defn}

\begin{lem} \label{lem:Sep-2R}
  Let $\cP$ and $\cP'$ be two pyramids.
  If we have
  \[
  \cM(\cP;N+1,R) \subset U_\varepsilon(\cM(\cP';N+1,R))
  \]
  for a natural number $N$ and for two real numbers $\varepsilon, R > 0$, then
  \[
  \Sep^{2R}(\cP;\kappa_0,\kappa_1,\dots,\kappa_N)
  \le \Sep^{2R}(\cP';\kappa_0-\varepsilon,\kappa_1-\varepsilon,\dots,
  \kappa_N-\varepsilon) + 2\varepsilon
  \]
  for any $\kappa_0,\dots,\kappa_N > \varepsilon$.
\end{lem}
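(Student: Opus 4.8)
The plan is to mirror the proof of Lemma \ref{lem:ObsDiam-2R}, replacing the one-dimensional partial-diameter analysis by an $(N+1)$-dimensional separation analysis. The backbone consists of two ingredients: (i) a representation of $\Sep^{2R}$ through the $(N+1,R)$-measurement, and (ii) a Prokhorov-stability estimate for the separation distance of a single measure on $\R^{N+1}$, which is the separation analogue of Lemma \ref{lem:dP-diam}. I expect (i) to be the main obstacle: one must faithfully encode $N+1$ separating Borel sets by a single $1$-Lipschitz map into $B^{N+1}_R$ and verify that truncation at level $2R$ discards exactly the excess beyond the cap while keeping the relevant images inside explicit closed sets (so that no measurability difficulty arises). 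Once the representation is in hand, (ii) and the final assembly are routine.

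The first step is to establish
\[
\Sep^{2R}(\cP;\kappa_0,\dots,\kappa_N)
= \lim_{\delta\to 0+}\sup_{\mu\in\cM(\cP;N+1,R)}\Sep(\mu;\kappa_0-\delta,\dots,\kappa_N-\delta),
\]
where $\Sep(\mu;\cdots)$ denotes the separation distance of the mm-space $(B^{N+1}_R,\|\cdot\|_\infty,\mu)$. I would derive this from Definition \ref{defn:Sep-pyramid} together with the pointwise identity $\Sep^{2R}(X;\kappa_0,\dots,\kappa_N)=\sup_\Phi\Sep(\Phi_*\mu_X;\kappa_0,\dots,\kappa_N)$, where $\Phi$ runs over $1$-Lipschitz maps $X\to(B^{N+1}_R,\|\cdot\|_\infty)$. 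Given Borel sets $A_0,\dots,A_N\subset X$ with $\mu_X(A_i)\ge\kappa_i$ and $\min_{i\ne j}d_X(A_i,A_j)=s$, set $\Phi:=(g_0,\dots,g_N)$ with $g_i(x):=\min(d_X(x,A_i),2R)-R$; each $g_i$ is $1$-Lipschitz with values in $[-R,R]$, so $\Phi$ maps into $B^{N+1}_R$. On $A_i$ one has $g_i\equiv -R$ and $g_j\ge\min(s,2R)-R$ for $j\ne i$, so the closed sets $B_i:=\{\,y\in B^{N+1}_R\mid y_i=-R,\ y_j\ge\min(s,2R)-R\ (j\ne i)\,\}$ satisfy $\Phi(A_i)\subset B_i$, whence $(\Phi_*\mu_X)(B_i)\ge\kappa_i$, and $\|y-y'\|_\infty\ge\min(s,2R)$ for $y\in B_i$, $y'\in B_j$; taking the supremum over the $A_i$ gives $\sup_\Phi\Sep(\Phi_*\mu_X;\cdots)\ge\Sep^{2R}(X;\cdots)$. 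Conversely, pulling back separating Borel sets by a $1$-Lipschitz $\Phi$ and using that $\Phi$ does not increase distances yields $\Sep(\Phi_*\mu_X;\cdots)\le\Sep(X;\cdots)$, while the $\|\cdot\|_\infty$-diameter $2R$ of $B^{N+1}_R$ forces this to be $\le 2R$. Using $\bigcup_{X\in\cP}\cM(X;N+1,R)=\cM(\cP;N+1,R)$ (from downward closedness of pyramids and the description of $\cM(X;N+1)$) and pushing $\min(\cdot,2R)$ through the $\sup$ and $\lim_{\delta\to 0+}$ produces the displayed representation.

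The second step is the stability estimate: if $\mu,\nu$ are Borel probability measures on a metric space with $d_P(\mu,\nu)<\varepsilon$, then $\Sep(\mu;\kappa_0,\dots,\kappa_N)\le\Sep(\nu;\kappa_0-\varepsilon,\dots,\kappa_N-\varepsilon)+2\varepsilon$ for all $\kappa_i>\varepsilon$. Indeed, given $A_i$ with $\mu(A_i)\ge\kappa_i$ and $\min_{i\ne j}d(A_i,A_j)=s$, set $B_i:=U_\varepsilon(A_i)$. The defining property of the Prokhorov distance, applied with the roles of $\mu$ and $\nu$ exchanged, gives $\nu(B_i)\ge\mu(A_i)-\varepsilon\ge\kappa_i-\varepsilon$, and the triangle inequality gives $d(B_i,B_j)\ge d(A_i,A_j)-2\varepsilon\ge s-2\varepsilon$; taking the supremum over the $A_i$ finishes this step.

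Finally I would assemble exactly as in Lemma \ref{lem:ObsDiam-2R}. Applying the representation to $\cP$, for every $\mu\in\cM(\cP;N+1,R)$ the hypothesis $\cM(\cP;N+1,R)\subset U_\varepsilon(\cM(\cP';N+1,R))$ supplies $\mu'\in\cM(\cP';N+1,R)$ with $d_P(\mu,\mu')<\varepsilon$. Applying the stability estimate with parameters $\kappa_i-\delta$, then taking the supremum over $\mu$ and the limit $\lim_{\delta\to 0+}$, bounds $\Sep^{2R}(\cP;\kappa_0,\dots,\kappa_N)$ above by
\[
\lim_{\delta\to 0+}\sup_{\mu'\in\cM(\cP';N+1,R)}\Sep(\mu';\kappa_0-\varepsilon-\delta,\dots,\kappa_N-\varepsilon-\delta)+2\varepsilon .
\]
Since $\kappa_i-\varepsilon>0$, the representation applied to $\cP'$ with the shifted parameters $\kappa_i-\varepsilon$ identifies this limit as $\Sep^{2R}(\cP';\kappa_0-\varepsilon,\dots,\kappa_N-\varepsilon)$, giving the claimed inequality.
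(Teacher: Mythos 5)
Your proof is correct, and its computational engine is the same as the paper's: both encode the separating sets $A_0,\dots,A_N\subset X$ by a tuple of truncated distance functions shifted by $-R$, so that the push-forward lands in $\cM(\cP;N+1,R)$, and both then pay $\varepsilon$ in measure and $2\varepsilon$ in separation to cross the Prokhorov hypothesis. The routes differ in where the decoding happens. The paper, having found $Y\in\cP'$ and a $1$-Lipschitz map $G^R:Y\to\R^{N+1}$ with $d_P(F^R_*\mu_X,G^R_*\mu_Y)<\varepsilon$, pulls the corner sets of $\R^{N+1}$ back to subsets $B_i=\{g_i<\varepsilon,\ g_j>r-\varepsilon\ (j\neq i)\}$ of $Y$ and uses the $1$-Lipschitz property of the coordinates $g_i$ to get $d_Y(B_i,B_j)\ge r-2\varepsilon$; it never states a representation of $\Sep^{2R}(\cP;\cdots)$. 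You, by contrast, never return to an mm-space of $\cP'$ at all: your $\varepsilon$-loss is a purely measure-level stability estimate on $(\R^{N+1},\|\cdot\|_\infty)$ with $B_i:=U_\varepsilon(A_i)$ (the exact $\Sep$-analogue of Lemma \ref{lem:dP-diam}), and all traffic between pyramids and measures is isolated in your representation formula, whose two directions are, respectively, the same distance-function encoding the paper uses and Proposition \ref{prop:Sep-prec} combined with the bound $\diam(B^{N+1}_R,\|\cdot\|_\infty)=2R$. What your factorization buys is uniformity and reusability: the argument becomes literally parallel to the chain Lemma \ref{lem:dP-diam} $\to$ Lemma \ref{lem:ObsDiam-2R}, so one template handles both invariants. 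What the paper's inline argument buys is brevity: it avoids verifying the identification $\bigcup_{X\in\cP}\cM(X;N+1,R)=\cM(\cP;N+1,R)$ and the exchanges of $\min\{\cdot\,,2R\}$ with suprema and the limit $\delta\to 0+$ that your representation requires (and, in the assembly, the trivial but necessary restriction $\delta<\min_i\kappa_i-\varepsilon$). Note that both arguments use the Prokhorov inequality in the direction opposite to the paper's literal definition of $d_P$, i.e., its symmetry, so your appeal to it incurs no debt beyond the paper's own.
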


\begin{proof}
  We take any $\delta > 0$ and any mm-space $X \in \cP$.
  Let $0 < r < \Sep^{2R}(X;\kappa_0-\delta,\dots,\kappa_N-\delta)$.
  There are Borel subsets $A_0,\dots,A_N \subset X$
  such that $\mu_X(A_i) \ge \kappa_i-\delta$
  and $d_X(A_i,A_j) \ge r$ for any different $i$ and $j$. 
  Set $f_i(x) := \min\{\;d_X(x,A_i),r\;\}$ for $x \in X$,
  $F := (f_0,\dots,f_N) : X \to \R^{N+1}$,
  and $F^R := (f_0-R,\dots,f_N-R) : X \to \R^{N+1}$.
  $F^R_*\mu_X$ belongs to $\cM(\cP;N+1,R)$.
  By $\cM(\cP;N+1,R) \subset U_\varepsilon(\cM(\cP';N+1,R))$,
  there are an mm-space $Y \in \cP'$ and
  a $1$-Lipschitz map $G^R : Y \to (\R^{N+1},\|\cdot\|_\infty)$ such that
  $d_P(F^R_*\mu_X,G^R_*\mu_Y) < \varepsilon$.
  We find maps $g_i : Y \to \R$, $i=0,1,\dots,N$, in such a way that
  $(g_0-R,\dots,g_N-R) = G^R$.  Setting $G := (g_0,\dots,g_N)$
  we have $d_P(F_*\mu_X,G_*\mu_Y) < \varepsilon$.
  Let
  \[
  B_i := \{\;g_i < \varepsilon, \ g_j > r-\varepsilon\ \text{for any $j \neq i$}\;\} \subset Y.
  \]
  We see that, for $i=0,1,\dots,N$,
  \begin{align*}
    \mu_Y(B_i) &= G_*\mu_Y( x_i < \varepsilon,\ x_j > r-\varepsilon
    \ \text{for any $j \neq i$}) \\
    &= G_*\mu_Y(U_\varepsilon(x_i \le 0,\ x_j \ge r\ \text{for any $j \neq i$})) \\
    &\ge F_*\mu_X(x_i \le 0,\ x_j \ge r\ \text{for any $j \neq i$}) - \varepsilon \\
    &= \mu_X(A_i)-\varepsilon \ge \kappa_i-\varepsilon-\delta.
  \end{align*}
  For any $y \in B_i$ and $y' \in B_j$ with $i\neq j$, we have
  \[
  d_Y(y,y') \ge |g_i(y)-g_i(y')| > r-2\varepsilon
  \]
  and hence $d_Y(B_i,B_j) \ge r-2\varepsilon$.
  Thus,
  \[
  \Sep(Y;\kappa_0-\varepsilon-\delta,\dots,\kappa_N-\varepsilon-\delta)
  \ge r-2\varepsilon.
  \]
  By the arbitrariness of $r$,
  \[
  \Sep^{2R}(X;\kappa_0-\delta,\dots,\kappa_N-\delta)
  \le \Sep(Y;\kappa_0-\varepsilon-\delta,\dots,\kappa_N-\varepsilon-\delta)
  +2\varepsilon.
  \] 
  This completes the proof.
\end{proof}

\begin{cor} \label{cor:Sep-2R}
  Let $\cP$ and $\cP'$ be two pyramids.
  If $\rho_R(\cP,\cP') < \frac{\varepsilon}{(N+1)2^{N+2}}$
  for a natural number $N$ and for two real numbers $\varepsilon,R > 0$, then
  \[
  \Sep^{2R}(\cP;\kappa_0,\kappa_1,\dots,\kappa_N)
  \le \Sep^{2R}(\cP';\kappa_0-\varepsilon,\kappa_1-\varepsilon,\dots,
  \kappa_N-\varepsilon) + 2\varepsilon
  \]
  for any $\kappa_0,\dots,\kappa_N > \varepsilon$.
\end{cor}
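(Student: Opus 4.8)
The plan is to deduce the corollary from Lemma \ref{lem:Sep-2R}, exactly as Corollary \ref{cor:ObsDiam-2R} was deduced from Lemma \ref{lem:ObsDiam-2R}: I would first manufacture, out of the hypothesis on $\rho_R$, the one-sided inclusion of $(N+1)$-measurements required by Lemma \ref{lem:Sep-2R}, and then quote that lemma. The factor $(N+1)2^{N+2}$ in the hypothesis is precisely the constant that Theorem \ref{thm:rhoR}(2) produces in dimension $N+1$, which fixes the dimension at which everything must be run.

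First I would apply Theorem \ref{thm:rhoR}(2) with $N+1$ in place of $N$, obtaining
\[
d_H(\cM(\cP;N+1,(N+1)R),\cM(\cP';N+1,(N+1)R)) \le (N+1)\,2^{N+2}\,\rho_R(\cP,\cP') < \varepsilon .
\]
Hence every element of $\cM(\cP;N+1,(N+1)R)$ lies within Prokhorov distance $\varepsilon$ of $\cM(\cP';N+1,(N+1)R)$, i.e.
\[
\cM(\cP;N+1,(N+1)R)\subset U_\varepsilon\bigl(\cM(\cP';N+1,(N+1)R)\bigr).
\]

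The delicate point is that Lemma \ref{lem:Sep-2R} (and the target $\Sep^{2R}$) demand the inclusion at radius $R$, not $(N+1)R$. To descend from radius $(N+1)R$ to radius $R$ I would use coordinatewise truncation: let $\pi:\R^{N+1}\to B^{N+1}_R$ clamp each coordinate into $[-R,R]$, which is $1$-Lipschitz for $\|\cdot\|_\infty$ and restricts to the identity on $B^{N+1}_R$. Push-forward by a $1$-Lipschitz map does not increase the Prokhorov distance, so for $\mu\in\cM(\cP;N+1,R)$ (already supported in $B^{N+1}_R$, whence $\pi_*\mu=\mu$) and a companion $\nu\in\cM(\cP';N+1,(N+1)R)$ with $d_P(\mu,\nu)<\varepsilon$ one gets $\pi_*\nu\in\cM(\cP';N+1,R)$ and $d_P(\mu,\pi_*\nu)=d_P(\pi_*\mu,\pi_*\nu)\le d_P(\mu,\nu)<\varepsilon$. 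This yields
\[
\cM(\cP;N+1,R)\subset U_\varepsilon\bigl(\cM(\cP';N+1,R)\bigr),
\]
and Lemma \ref{lem:Sep-2R} then gives the asserted inequality for $\Sep^{2R}$.

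I expect the radius reduction to be the only step needing genuine care; it is the same truncation/contraction mechanism that underlies Lemma \ref{lem:cM-NR}, and without it one would only obtain the weaker statement at truncation level $2(N+1)R$. Everything else is bookkeeping: matching the dimension $N+1$ to the constant $(N+1)2^{N+2}$, and checking that $\pi\circ\Phi$ remains $1$-Lipschitz so that $\pi_*\nu$ indeed belongs to $\cM(\cP';N+1,R)$. No compactness or limiting argument is needed here, since those were already absorbed into Lemma \ref{lem:Sep-2R} and Theorem \ref{thm:rhoR}.
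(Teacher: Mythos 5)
Your proof is correct and uses exactly the paper's route: the paper's own proof is the one-line combination of Theorem \ref{thm:rhoR}(2) (applied in dimension $N+1$, which is where the constant $(N+1)2^{N+2}$ comes from) with Lemma \ref{lem:Sep-2R}, and your clamping argument is a legitimate way to supply the radius-descent step that the paper leaves implicit. One correction to a side remark, though: your claim that without the clamping one would ``only obtain the weaker statement at truncation level $2(N+1)R$'' has the comparison backwards. For $D \le D'$, the inequality $\min\{a,D'\} \le \min\{b,D'\}+c$ implies $\min\{a,D\} \le \min\{b,D\}+c$: if $b < D$ then both truncations of $b$ equal $b$ and $\min\{a,D\}\le\min\{a,D'\}$; if $b \ge D$ then $\min\{a,D\} \le D = \min\{b,D\} \le \min\{b,D\}+c$. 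So applying Lemma \ref{lem:Sep-2R} directly at radius $(N+1)R$ yields the conclusion for $\Sep^{2(N+1)R}$, which is the \emph{stronger} statement and immediately implies the corollary for $\Sep^{2R}$. Your truncation map is thus a correct but avoidable detour, not the load-bearing step you describe it as.
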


\begin{proof}
  The corollary follows from Theorem \ref{thm:rhoR}(2)
  and Lemma \ref{lem:Sep-2R}.
\end{proof}

\begin{thm}[Limit formula for separation distance] \label{thm:lim-Sep}
  Let $\cP$ and $\cP_n$, $n=1,2,\dots$, be pyramids.
  If $\cP_n$ converges weakly to $\cP$ as $n\to\infty$, then
  \begin{align*}
    \Sep(\cP;\kappa_0,\kappa_1,\dots,\kappa_N)
    &= \lim_{\varepsilon\to 0+} \liminf_{n\to\infty}
    \Sep(\cP_n;\kappa_0-\varepsilon,\kappa_1-\varepsilon,\dots,\kappa_N-\varepsilon) \\
    &= \lim_{\varepsilon\to 0+} \limsup_{n\to\infty}
    \Sep(\cP_n;\kappa_0-\varepsilon,\kappa_1-\varepsilon,\dots,\kappa_N-\varepsilon)
  \end{align*}
  for any $\kappa_0,\dots,\kappa_N > 0$.
\end{thm}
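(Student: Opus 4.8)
The plan is to follow the proof of Theorem~\ref{thm:lim-ObsDiam} verbatim in structure, substituting Corollary~\ref{cor:Sep-2R} for Corollary~\ref{cor:ObsDiam-2R} and the left-continuity of $\Sep(\cP;\kappa_0,\dots,\kappa_N)$ (recorded after Definition~\ref{defn:Sep-pyramid}) for the right-continuity of $\ObsDiam(\cP;-\kappa)$ used there. Fix $\kappa_0,\dots,\kappa_N>0$ and $R>0$, and work throughout with the truncated invariant $\Sep^{2R}$. For $0<\varepsilon<\frac{1}{2}\min_i\kappa_i$, weak convergence together with Theorem~\ref{thm:rhoR}(1) provides an $n_0$ with $\rho_R(\cP_n,\cP)<\varepsilon/((N+1)2^{N+2})$ for all $n\ge n_0$.

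For such $n$ I would invoke Corollary~\ref{cor:Sep-2R} twice. Applying it to the pair $(\cP,\cP_n)$ with the original parameters (legitimate since $\kappa_i>\varepsilon$) gives the lower bound
\[
\Sep^{2R}(\cP_n;\kappa_0-\varepsilon,\dots,\kappa_N-\varepsilon)
\ge \Sep^{2R}(\cP;\kappa_0,\dots,\kappa_N)-2\varepsilon;
\]
applying it to the pair $(\cP_n,\cP)$ with the shifted parameters $\kappa_i-\varepsilon$ (legitimate since $\kappa_i-\varepsilon>\varepsilon$) gives the upper bound
\[
\Sep^{2R}(\cP_n;\kappa_0-\varepsilon,\dots,\kappa_N-\varepsilon)
\le \Sep^{2R}(\cP;\kappa_0-2\varepsilon,\dots,\kappa_N-2\varepsilon)+2\varepsilon.
\]
The outer bounds are independent of $n$, so taking $\liminf_{n\to\infty}$ and $\limsup_{n\to\infty}$ traps both of them between the same two quantities. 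Letting $\varepsilon\to0+$ and using the left-continuity of $\Sep(\cP;\cdot)$, hence of $\Sep^{2R}(\cP;\cdot)$, to collapse $\Sep^{2R}(\cP;\kappa_0-2\varepsilon,\dots,\kappa_N-2\varepsilon)$ to $\Sep^{2R}(\cP;\kappa_0,\dots,\kappa_N)$, I obtain the truncated limit formula
\[
\Sep^{2R}(\cP;\kappa_0,\dots,\kappa_N)
= \lim_{\varepsilon\to0+}\liminf_{n\to\infty}\Sep^{2R}(\cP_n;\kappa_0-\varepsilon,\dots,\kappa_N-\varepsilon)
= \lim_{\varepsilon\to0+}\limsup_{n\to\infty}\Sep^{2R}(\cP_n;\kappa_0-\varepsilon,\dots,\kappa_N-\varepsilon).
\]

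It then remains to remove the truncation by letting $R\to\infty$. Since $t\mapsto\min\{t,2R\}$ is continuous and nondecreasing, it commutes with $\liminf_{n}$, $\limsup_n$, and the monotone limit in $\varepsilon$; writing $L^{-}$ and $L^{+}$ for the iterated $\liminf$ and $\limsup$ of the untruncated $\Sep(\cP_n;\kappa_0-\varepsilon,\dots,\kappa_N-\varepsilon)$, the displayed identity becomes $\min\{\Sep(\cP;\kappa_0,\dots,\kappa_N),2R\}=\min\{L^{-},2R\}=\min\{L^{+},2R\}$, valid for every $R>0$. Letting $R\to\infty$ yields $\Sep(\cP;\kappa_0,\dots,\kappa_N)=L^{-}=L^{+}$ in $[0,+\infty]$, which is the assertion.

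I expect the only genuine difficulty to be bookkeeping: keeping the two independent $\varepsilon$-shifts straight (one arising from each application of Corollary~\ref{cor:Sep-2R}) and verifying that it is precisely left-continuity, rather than the right-continuity used for the observable diameter, that closes the residual gap as $\varepsilon\to0+$. The restriction $\varepsilon<\frac{1}{2}\min_i\kappa_i$ is exactly what keeps every parameter positive so that Corollary~\ref{cor:Sep-2R} is applicable in both directions.
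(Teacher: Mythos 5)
Your proposal is correct and follows exactly the route the paper intends: its proof of Theorem \ref{thm:lim-Sep} is literally ``the same way as in the proof of Theorem \ref{thm:lim-ObsDiam}, by using Corollary \ref{cor:Sep-2R},'' which is what you carry out, with the downward parameter shifts and the left-continuity of $\Sep(\cP;\kappa_0,\dots,\kappa_N)$ replacing the upward shifts and right-continuity used for the observable diameter. Your only addition is to spell out the removal of the $2R$-truncation (which the paper dispatches with ``since this holds for any $R>0$''), and that step is handled correctly.
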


\begin{proof}
  The theorem is obtained in the same way as in the proof of Theorem
  \ref{thm:lim-ObsDiam}, by using Corollary \ref{cor:Sep-2R}.
\end{proof}

\begin{prop} \label{prop:ObsDiam-Sep-pyramid}
  Let $\cP$ be a pyramid.
  Then we have
  \begin{align*}
    \tag{1} &\ObsDiam(\cP;-2\kappa) \le \Sep(\cP;\kappa,\kappa) \\
    \tag{2} &\Sep(\cP;\kappa,\kappa) \le \ObsDiam(\cP;-\kappa')
  \end{align*}
  for any real number $\kappa$ and $\kappa'$ with $0 < \kappa' < \kappa$.
\end{prop}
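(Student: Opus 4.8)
The plan is to derive both inequalities from their mm-space counterparts in Proposition~\ref{prop:ObsDiam-Sep}, combining them with the monotonicity of $\ObsDiam$ and $\Sep$ in their weight arguments and with the defining one-sided limits in Definitions~\ref{defn:ObsDiam-pyramid} and \ref{defn:Sep-pyramid}. The only new content beyond the mm-space case is the bookkeeping of the limits $\lim_{\delta\to 0+}$ in those definitions, together with one preliminary observation that I will reuse: for every $\kappa > 0$ one has $\sup_{X\in\cP}\Sep(X;\kappa,\kappa) \le \Sep(\cP;\kappa,\kappa)$. Indeed, $\Sep(X;\kappa-\delta,\kappa-\delta) \ge \Sep(X;\kappa,\kappa)$ by monotonicity for each $\delta>0$, so the decreasing limit defining $\Sep(\cP;\kappa,\kappa)$ stays above $\sup_{X\in\cP}\Sep(X;\kappa,\kappa)$.

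For inequality~(1), I would start from $\ObsDiam(\cP;-2\kappa)=\lim_{\delta\to 0+}\sup_{X\in\cP}\ObsDiam(X;-(2\kappa+\delta))$. Writing $2\kappa+\delta=2(\kappa+\delta/2)$ and applying Proposition~\ref{prop:ObsDiam-Sep}(1) to each $X\in\cP$ gives $\ObsDiam(X;-(2\kappa+\delta))\le\Sep(X;\kappa+\delta/2,\kappa+\delta/2)$. Since $\kappa+\delta/2>\kappa$, monotonicity of $\Sep$ yields $\Sep(X;\kappa+\delta/2,\kappa+\delta/2)\le\Sep(X;\kappa,\kappa)$, and the preliminary observation bounds the supremum over $X\in\cP$ by $\Sep(\cP;\kappa,\kappa)$. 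As this bound is independent of $\delta$, letting $\delta\to 0+$ produces $\ObsDiam(\cP;-2\kappa)\le\Sep(\cP;\kappa,\kappa)$.

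For inequality~(2), fix an auxiliary weight $\kappa''$ with $\kappa'<\kappa''<\kappa$. For any $\eta$ with $0<\eta<\kappa''-\kappa'$, Proposition~\ref{prop:ObsDiam-Sep}(2) applied with weights $\kappa''$ and $\kappa'+\eta$ gives $\Sep(X;\kappa'',\kappa'')\le\ObsDiam(X;-(\kappa'+\eta))$ for every $X\in\cP$; taking the supremum over $X$ and then $\eta\to 0+$ (the right-hand side being monotone, with limit exactly $\ObsDiam(\cP;-\kappa')$ by Definition~\ref{defn:ObsDiam-pyramid}) yields $\sup_{X\in\cP}\Sep(X;\kappa'',\kappa'')\le\ObsDiam(\cP;-\kappa')$. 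On the other side, since $\kappa''<\kappa$, choosing $\delta_0:=\kappa-\kappa''>0$ in the decreasing limit defining $\Sep(\cP;\kappa,\kappa)$ and using $\kappa-\delta_0=\kappa''$ gives $\Sep(\cP;\kappa,\kappa)\le\sup_{X\in\cP}\Sep(X;\kappa'',\kappa'')$. Chaining the two estimates proves (2).

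I expect the main obstacle to be purely notational: ensuring each monotonicity step runs in the correct direction (increasing the weights decreases both $\ObsDiam$ and $\Sep$) and that each $\lim_{\delta\to 0+}$ is treated as a monotone limit rather than naively interchanged with a supremum. No compactness or weak-convergence input is needed here — the statement is an elementary consequence of the mm-space inequalities and the structure of the pyramid definitions — so once the parameter bookkeeping is arranged the argument is routine.
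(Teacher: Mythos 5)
Your proof is correct, and it takes a genuinely different route from the paper's. The paper deduces the proposition from its own limit formulas: it takes a sequence of mm-spaces $Y_n$ approximating $\cP$, applies Proposition~\ref{prop:ObsDiam-Sep} to each $Y_n$ in the form $\ObsDiam(Y_n;-2(\kappa+\delta)) \le \Sep(Y_n;\kappa+\delta,\kappa+\delta) \le \Sep(Y_n;\kappa-\delta,\kappa-\delta)$, and then, since $\cP_{Y_n}$ converges weakly to $\cP$, invokes Theorems~\ref{thm:lim-ObsDiam} and~\ref{thm:lim-Sep} to pass to the limit. You instead never leave the definitions: you unwind the one-sided limits $\lim_{\delta\to 0+}\sup_{X\in\cP}(\cdots)$ in Definitions~\ref{defn:ObsDiam-pyramid} and~\ref{defn:Sep-pyramid}, apply the mm-space inequality pointwise on $\cP$, and splice the results together via the two bridging facts $\sup_{X\in\cP}\Sep(X;\kappa,\kappa)\le\Sep(\cP;\kappa,\kappa)$ and $\Sep(\cP;\kappa,\kappa)\le\sup_{X\in\cP}\Sep(X;\kappa'',\kappa'')$ for $\kappa''<\kappa$; both are verified correctly, the first because the defining limit for $\Sep(\cP;\cdot,\cdot)$ is a decreasing one, the second by evaluating that limit at the particular value $\delta_0=\kappa-\kappa''$, and your handling of the opposite placement of $\delta$ in the two definitions (inside $\ObsDiam$ the weight is perturbed upward, inside $\Sep$ downward) is exactly the bookkeeping that makes the direct argument go through. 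What the paper's route buys is brevity in context — once the limit formulas are available the proof is three lines, and they absorb all the monotone-limit care. What your route buys is logical economy: you use no approximating sequence, no weak convergence, no metric $\rho_R$, and no compactness of $\Pi$, so the proposition is exposed as an elementary consequence of Proposition~\ref{prop:ObsDiam-Sep} and the definitions alone, and could in principle be placed before the limit theorems of Sections~\ref{sec:ObsDiam} and~\ref{sec:Sep}.
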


\begin{proof}
  Let $0 < \kappa' < \kappa$.
  We take a sequence of mm-spaces $Y_n$, $n=1,2,\dots$,
  approximating $\cP$.
  Proposition \ref{prop:ObsDiam-Sep} implies that, for any $\delta$
  with $0 < \delta < \kappa$,
  \begin{align*}
    \ObsDiam(Y_n;-2(\kappa+\delta)) &\le \Sep(Y_n;\kappa+\delta,\kappa+\delta) \\
    &\le \Sep(Y_n;\kappa-\delta,\kappa-\delta).
  \end{align*}
  Since $\cP_{Y_n} \to \cP$ as $n\to\infty$,
  applying Theorems \ref{thm:lim-ObsDiam} and \ref{thm:lim-Sep}
  yields (1).
  (2) is proved in the same way.
  This completes the proof.
\end{proof}

\section{$N$-L\'evy Family} \label{sec:N-Levy}

\begin{defn}[$N$-L\'evy family] \label{defn:N-Levy}
  Let $N$ be a natural number.
  A sequence of pyramids $\cP_n$, $n=1,2,\dots$, is called
  an \emph{$N$-L\'evy family}
  if
  \[
  \lim_{n\to\infty} \Sep(\cP_n;\kappa_0,\kappa_1,\dots,\kappa_N) = 0
  \]
  for any $\kappa_0,\kappa_1,\dots,\kappa_N > 0$ with $\sum_{i=0}^N \kappa_i < 1$.
  A $1$-L\'evy family is called a \emph{L\'evy family}.
\end{defn}

\begin{defn}[$\#\cP$]
  For a pyramid $\cP$,
  we define
  \[
  \#\cP := \sup_{X \in \cP} \# X \quad (\le +\infty),
  \]
  where $\# X$ denotes the number of elements of $X$.
\end{defn}

\begin{lem} \label{lem:Sep-fin}
  Let $X$ be an mm-space, $\cP$ a pyramid, and $N$ a natural number.
  Then we have the following {\rm(1)} and {\rm(2)}.
  \begin{enumerate}
  \item We have
    \[
    \Sep(X;\kappa_0,\kappa_1,\dots,\kappa_N) = 0
    \]
    for any $\kappa_0,\kappa_1,\dots,\kappa_N > 0$
    with $\sum_{i=1}^N \kappa_i < 1$ if and only if $\# X \le N$.
  \item We have
    \[
    \Sep(\cP;\kappa_0,\kappa_1,\dots,\kappa_N) = 0
    \]
    for any $\kappa_0,\kappa_1,\dots,\kappa_N > 0$
    with $\sum_{i=1}^N \kappa_i < 1$ if and only if $\#\cP \le N$.
  \end{enumerate}
\end{lem}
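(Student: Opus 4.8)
The plan is to prove (1) directly and then deduce (2) from (1) using the definition $\Sep(\cP;\kappa_0,\dots,\kappa_N)=\lim_{\delta\to0+}\sup_{X\in\cP}\Sep(X;\kappa_0-\delta,\dots,\kappa_N-\delta)$ together with $\#\cP=\sup_{X\in\cP}\#X$. Throughout I use the standing assumption $X=\supp\mu_X$, which guarantees that every nonempty open subset of $X$ has positive measure, and I use that $\Sep(X;\kappa_0,\dots,\kappa_N)$ is monotone nonincreasing in each argument.

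For the ``if'' direction of (1), suppose $\#X\le N$. Given any $\kappa_0,\dots,\kappa_N>0$ and any Borel sets $A_0,\dots,A_N$ with $\mu_X(A_i)\ge\kappa_i$, each $A_i$ is nonempty; since $X$ has at most $N$ points, these $N+1$ sets cannot be pairwise disjoint, so $A_i\cap A_j\ne\emptyset$ for some $i\ne j$, forcing $d_X(A_i,A_j)=0$ and hence $\min_{i\ne j}d_X(A_i,A_j)=0$. Taking the supremum over all admissible families gives $\Sep(X;\kappa_0,\dots,\kappa_N)=0$. I note that this pigeonhole argument uses nothing about the $\kappa_i$ beyond positivity, so in fact $\Sep(X;\cdot)$ vanishes for \emph{all} positive arguments; this strengthened form is what I will feed into (2).

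For the ``only if'' direction I argue by contraposition: assuming $\#X\ge N+1$, I produce admissible $\kappa_i$ with $\Sep(X;\kappa_0,\dots,\kappa_N)>0$. Choose $N+1$ distinct points $x_0,\dots,x_N\in X=\supp\mu_X$, set $\rho:=\min_{i\ne j}d_X(x_i,x_j)>0$, fix $0<r<\rho/2$, and put $A_i:=U_r(x_i)$. These open balls are pairwise at distance at least $\rho-2r>0$, and each satisfies $\mu_X(A_i)>0$ by the support assumption. Define $\kappa_i:=\mu_X(A_i)$. The crucial point, which explains why the index $0$ is excluded from the constraint, is that the $A_i$ are pairwise disjoint, so $\sum_{i=0}^N\kappa_i\le1$ and therefore $\sum_{i=1}^N\kappa_i\le1-\kappa_0<1$; the $\kappa_i$ are thus admissible, while $\Sep(X;\kappa_0,\dots,\kappa_N)\ge\min_{i\ne j}d_X(A_i,A_j)>0$, contradicting the hypothesis. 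This proves (1).

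Part (2) then follows. For ``if'', if $\#\cP\le N$ then every $X\in\cP$ has $\#X\le N$, so by the strengthened form of (1) we get $\Sep(X;\kappa_0-\delta,\dots,\kappa_N-\delta)=0$ for all $\delta>0$ small enough that the arguments are positive; taking $\sup_{X\in\cP}$ and then $\delta\to0+$ gives $\Sep(\cP;\kappa_0,\dots,\kappa_N)=0$. For ``only if'', suppose $\#\cP\ge N+1$; since $\#\cP=\sup_{X\in\cP}\#X$ and $N+1$ is finite, some $X\in\cP$ has $\#X\ge N+1$, and the construction above yields admissible $\kappa_0,\dots,\kappa_N$ with $\Sep(X;\kappa_0,\dots,\kappa_N)>0$. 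Monotonicity gives $\Sep(X;\kappa_0-\delta,\dots,\kappa_N-\delta)\ge\Sep(X;\kappa_0,\dots,\kappa_N)$, so the inner supremum stays bounded below by this positive number for all small $\delta$, and letting $\delta\to0+$ yields $\Sep(\cP;\kappa_0,\dots,\kappa_N)>0$, a contradiction. The only genuinely delicate point in the whole argument is the bookkeeping in the converse of (1): one must secure the \emph{strict} inequality $\sum_{i=1}^N\kappa_i<1$ while keeping every separation positive, and this is exactly what the disjointness of the balls together with the positivity of $\mu_X(A_0)$ provides.
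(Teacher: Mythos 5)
Your proof is correct for the lemma as stated, and it is leaner than the paper's in two places. In (1), the paper splits the ``only if'' direction into two cases: for $\#X\ge N+2$ it takes $N+2$ distinct points and disjoint balls $A_i=U_{r/3}(x_i)$ with $\kappa_i:=\mu_X(A_i)$, while for $\#X=N+1$ it instead chooses $\kappa_i$ strictly below $\min_j\mu_X(\{x_j\})$. The reason for the case split is that the paper's construction secures the stronger constraint $\sum_{i=0}^N\kappa_i<1$ (the leftover $\kappa_{N+1}$ in the first case, the strict shrinking in the second), whereas your unified $(N+1)$-point construction secures only $\sum_{i=1}^N\kappa_i\le 1-\kappa_0<1$, which is exactly the constraint displayed in the statement. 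In (2), the paper's ``only if'' transfers $\Sep(\cP;\cdot)=0$ to $\Sep(X;\cdot)=0$ for each $X\in\cP$ via the left-continuity result (Lemma \ref{lem:lconti-Sep}); your contrapositive argument uses only the monotonicity of $\Sep$ in the $\kappa_i$, so it bypasses that lemma entirely. Both simplifications are legitimate.

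One caveat is worth recording. The constraint $\sum_{i=1}^N\kappa_i<1$ in the statement is very likely a typo for $\sum_{i=0}^N\kappa_i<1$: that is the constraint appearing in Definition \ref{defn:N-Levy} of an $N$-L\'evy family, and the ``only if'' direction of the lemma in that stronger form --- producing a tuple with $\sum_{i=0}^N\kappa_i<1$ and positive separation --- is what the proof of Theorem \ref{thm:N-Levy} actually requires. Your construction does not deliver this: if $X$ consists of exactly $N+1$ atoms (more generally, whenever your $N+1$ disjoint balls carry full measure), then the $\kappa_i=\mu_X(A_i)$ sum to exactly $1$. The repair is either the paper's second case, or, uniformly, taking $\kappa_i:=(1-\eta)\mu_X(A_i)$ for a small $\eta>0$, after which $\sum_{i=0}^N\kappa_i\le 1-\eta<1$ and your one-case argument proves the stronger version as well.
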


\begin{proof}
  We prove (1).
  The `if' part is obvious.
  Let us prove the `only if' part.
  It suffices to show that, if $\#X \ge N+1$, then
  $\Sep(X;\kappa_0,\kappa_1,\dots,\kappa_N) > 0$
  for some $\kappa_0,\kappa_1,\dots,\kappa_N > 0$
    with $\sum_{i=1}^N \kappa_i < 1$.

  If $\#X \ge N+2$, then we find different $N+2$ points
  $x_0,x_1,\dots,x_{N+1} \in X$ and set
  \[
  r := \min_{i\neq j} d_X(x_i,x_j) > 0, \quad A_i := U_{r/3}(x_i),
  \quad\text{and}\quad \kappa_i := \mu_X(A_i).
  \]
  Note that each $\kappa_i$ is positive.
  We have $\min_{i\neq j} d_X(A_i,A_j) \ge r/3$ by triangle inequalities,
  and therefore
  \[
  \Sep(X;\kappa_0,\kappa_1,\dots,\kappa_N) \ge r/3 > 0.
  \]
  We also have
  $\sum_{i=0}^N \kappa_i \le 1-\kappa_{N+1} < 1$.
  
  If $\#X = N+1$, then we find real numbers $\kappa_0,\dots,\kappa_N$
  such that $0 < \kappa_i < \min_j \mu_X(\{x_j\})$ for any $i$,
  where $\{x_0,x_1,\dots,x_N\} := X$.
  We see $\sum_{i=0}^N \kappa_i < 1$.
  Since $x_0,x_1,\dots,x_N$ are different to each other,
  \[
  \Sep(X;\kappa_0,\kappa_1,\dots,\kappa_N) > 0.
  \]
  (1) has been proved.
  
  We prove (2).  The `if' part is easy to prove.
  We prove the `only if' part.
  Let $\kappa_0,\kappa_1,\dots,\kappa_N > 0$ be any real numbers
  with $\sum_{i=1}^N \kappa_i < 1$.
  We assume
  \[
  \Sep(\cP;\kappa_0,\kappa_1,\dots,\kappa_N) = 0.
  \]
  Then, for any mm-space $X \in \cP$, we have, by Lemma \ref{lem:lconti-Sep},
  \begin{align*}
    \Sep(X;\kappa_0,\dots,\kappa_N) 
    &= \lim_{\delta\to 0+} \Sep(X;\kappa_0-\delta,\dots,\kappa_N-\delta)\\
    &\le \lim_{\delta\to 0+} \sup_{Y\in\cP}
    \Sep(Y;\kappa_0-\delta,\dots,\kappa_N-\delta)\\
    &= \Sep(\cP;\kappa_0,\kappa_1,\dots,\kappa_N) = 0,
  \end{align*}
  which together with (1) implies $\#X \le N$.
  By the arbitrariness of $X$ we obtain $\#\cP \le N$.
  This completes the proof of the lemma.
\end{proof}

\begin{defn}[Extended mm-space]
  We consider to generalize the definition of an mm-space
  such that the metric is allowed to take values in $[\,0,+\infty\,]$.
  We call such a space an \emph{extended mm-space}.
  We define the Lipschitz order $\prec$ between extended mm-spaces
  in the same manner,
  and define the \emph{pyramid $\cP_X$ associated with an extended
  mm-space $X$} by
  \[
  \cP_X := \{\; X' \in \cX \mid X' \prec X \;\}.
  \]
\end{defn}

It is easy to see that $\cP_X$ is a pyramid for any extended mm-space $X$.
For an extended mm-space $X$ and a real number $D > 0$, we define
$X^D := (X,d_{X^D},\mu_X)$, where $d_{X^D}(x,y) := \min\{d_X(x,y),D\}$
for $x,y \in X$.  Then, $X^D$ is an mm-space belonging to $\cP_X$.
We observe that $\cP_X$ coincides with the $\square$-closure
of $\bigcup_{0 < D < +\infty} \cP_{X^D}$.

\begin{prop} \label{prop:fin-ex}
  We have $\#\cP < +\infty$ if and only if
  there exists a finite extended mm-space $X$ such that $\cP = \cP_X$.
  In this case, we have $\#X = \#\cP$.
\end{prop}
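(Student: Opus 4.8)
The plan is to prove both implications at once, extracting the cardinality equality along the way.

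First I would dispatch the \emph{if} direction together with the equality $\#X=\#\cP$. Suppose $\cP=\cP_X$ for a finite extended mm-space $X$ with $\#X=k$. For any $Y\in\cP_X$ we have $Y\prec X$, so there is a $1$-Lipschitz map from $X$ onto $\supp\mu_Y=Y$; hence $\#Y\le\#X=k$, giving $\#\cP\le k$. Conversely, for each $D>0$ the truncation $X^D$ lies in $\cP_X$ (the identity $X\to X^D$ is $1$-Lipschitz and measure-preserving), and since $d_{X^D}(x,y)=\min\{d_X(x,y),D\}>0$ for distinct $x,y$ we have $\#X^D=k$, so $\#\cP\ge k$. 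Thus $\#\cP=\#X=k<+\infty$, which settles the \emph{if} part and the asserted equality whenever $\cP=\cP_X$.

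For the \emph{only if} part, assume $\#\cP=k<+\infty$ and set $\mathcal{K}:=\{\,Y\in\cP\mid\#Y=k\,\}$, which is nonempty by the definition of $\#\cP$. Using the directedness of $\cP$ (Definition \ref{defn:pyramid}(2)) together with $\#\cP=k$, I would first check that $\mathcal{K}$ is directed and cofinal: any common upper bound $W\in\cP$ of two members of $\mathcal{K}$ satisfies $k\le\#W\le k$, so $W\in\mathcal{K}$, and every $Z\in\cP$ is dominated by such a $W$. Since $(\cX,\square)$ is separable, a greedy selection over a countable $\square$-dense subset of $\cP$ then produces an increasing sequence $Y_1\prec Y_2\prec\cdots$ in $\mathcal{K}$ approximating $\cP$, i.e. $\overline{\bigcup_n\cP_{Y_n}}^\square=\cP$.

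Next I would build the extended mm-space by a monotone limit. Each domination $Y_n\prec Y_{n+1}$ is realized by a $1$-Lipschitz measure-preserving map $f_n:Y_{n+1}\to Y_n$, which must be a bijection because both spaces have exactly $k$ points. Composing the $f_n$ identifies all $Y_n$ with one index set $\{1,\dots,k\}$; under this identification the weights $m_i:=\mu_{Y_n}(\{i\})$ are independent of $n$, while the distances $d_n(i,j)$ are nondecreasing in $n$ and stay positive for $i\ne j$. Setting $d_\infty(i,j):=\lim_n d_n(i,j)\in(0,+\infty]$, which is symmetric, vanishes exactly on the diagonal, and inherits the triangle inequality in the limit, gives a finite extended mm-space $X:=(\{1,\dots,k\},d_\infty,(m_i))$ with $\#X=k$.

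Finally I would identify $\cP=\cP_X$. For $\cP\subset\cP_X$: since $d_n\le d_\infty$, the identity map $X\to Y_n$ is $1$-Lipschitz and measure-preserving, so $Y_n\prec X$ and $\cP_{Y_n}\subset\cP_X$; taking the $\square$-closure and using that $\cP_X$ is a pyramid yields $\cP=\overline{\bigcup_n\cP_{Y_n}}^\square\subset\cP_X$. For $\cP_X\subset\cP$: by the observation preceding the proposition, $\cP_X$ is the $\square$-closure of $\bigcup_{0<D<+\infty}\cP_{X^D}$, so it suffices to show $X^D\in\cP$ for every $D$. Using a common parameter on $\{1,\dots,k\}$, the truncations $(Y_n)^D$ converge in $\square$ to $X^D$ because $\min\{d_n,D\}\to\min\{d_\infty,D\}$; as $(Y_n)^D\prec Y_n$ lies in $\cP$ and $\cP$ is $\square$-closed, we get $X^D\in\cP$. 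I expect the main obstacle to be the bookkeeping in the third step: verifying that equal-cardinality Lipschitz domination forces a measure-preserving bijection and that the composed identifications are compatible, so that the pointwise monotone limit $d_\infty$ is a genuine extended metric; the remaining steps are routine once this limit space is in hand.
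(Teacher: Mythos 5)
Your proof is correct and takes essentially the same route as the paper's: extract an approximating sequence of $k$-point spaces, identify them via the bijective $1$-Lipschitz measure-preserving maps, define the finite extended mm-space $X$ as the monotone limit of the distances, and establish $\cP\subset\cP_X$ and $\cP_X\subset\cP$ through the truncations $X^D$ and $\square$-closedness. The only cosmetic difference is that you construct the approximating sequence inside $\mathcal{K}$ by a cofinality/greedy argument, whereas the paper takes a generic approximating sequence of $\cP$ and observes that its terms eventually have exactly $\#\cP$ points.
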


\begin{proof}
  Note that the number of elements $\#X$ is monotone nondecreasing
  in $X$ with respect to the Lipschitz order.
  We easily see that $\#X = \#\cP_X$ for any extended mm-space.
  In particular,  we have the `if' part of the proposition.
  We prove the `only if' part.
  Assume that $N := \#\cP < +\infty$.
  Let $\{X_n\}_{n=1}^\infty$ be a sequence of mm-spaces
  approximating $\cP$.
  There is a natural number $n_0$ such that $\#X_n = N$ for any $n \ge n_0$.
  Since $X_1 \prec X_2 \prec \dots \prec X_n \prec \dots$,
  we find $1$-Lipschitz maps $f_n : X_{n+1} \to X_n$ which pushes
  $\mu_{X_{n+1}}$ forward to $\mu_{X_n}$.
  For $n \ge n_0$, the map $f_n$ is bijective.
  Let $\{x_1^n,x_2^n,\dots,x_N^n\} := X_n$
  such that $f_n(x_i^{n+1}) = x_i^n$ for any $i=1,2,\dots,N$ and $n \ge n_0$.
  We see that $\mu_{X_n}(\{x_i^n\})$ is independent of $n \ge n_0$
  and that $d_{X_n}(x_i^n,x_j^n)$ is monotone nondecreasing in $n \ge n_0$
  for any $i,j=1,2,\dots,N$.
  Let $X = \{x_1,x_2,\dots,x_N\}$ be an $N$-point space
  and define an (extended) mm-structure of $X$ by
  \[
  d_X(x_i,x_j) := \lim_{n\to\infty} d_{X_n}(x_i^n,x_j^n) \ (\le +\infty)
  \quad\text{and}\quad
  \mu_X(\{x_i\}) := \mu_{X_n}(\{x_i^n\})
  \]
  for $i,j=1,2,\dots,N$ and $n\ge n_0$.
  Then we have $X_n \prec X$ for any $n$ and therefore
  $\cP \subset \cP_X$.
  The rest of the proof is to show $\cP_X \subset \cP$.
  Let $D > 0$ be any number.
  Since $\lim_{n\to\infty} d_{X_n^D}(x_i^n,x_j^n) = d_{X^D}(x_i,x_j)$
  for any $i,j = 1,2,\dots,N$,
  we see that $X_n^D$ $\square$-converges to $X^D$ as $n\to\infty$,
  which together with $X_n^D \in \cP$ implies $X^D \in \cP$.
  Since $\cP_X$ coincides with the $\square$-closure of
  $\bigcup_{0 < D <+\infty} \cP_{X^D}$, we have $\cP_X \subset \cP$.
  This completes the proof.
\end{proof}

Combining the statements proved before, we obtain the following theorem.

\begin{thm} \label{thm:N-Levy}
  Let $\{\cP_n\}_{n=1}^\infty$ be a sequence of pyramids
  converging weakly to a pyramid $\cP$, and $N$ a natural number.
  Then, the following {\rm(1)} and {\rm(2)} are equivalent to each other.
  \begin{enumerate}
  \item $\{\cP_n\}$ is an $N$-L\'evy family.
  \item There exists a finite extended mm-space $X$ with $\#X \le N$
    such that $\cP = \cP_X$.
  \end{enumerate}
\end{thm}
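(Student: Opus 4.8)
The plan is to route the whole equivalence through the single numerical invariant $\#\cP$, so that the only substantive input is the limit formula Theorem \ref{thm:lim-Sep}; all remaining steps are translations through results already proved. By Proposition \ref{prop:fin-ex}, condition (2) is equivalent to $\#\cP \le N$, because $\#X = \#\cP$ whenever $\cP = \cP_X$ for a finite extended mm-space $X$. By Lemma \ref{lem:Sep-fin}(2), the inequality $\#\cP \le N$ is equivalent to $\Sep(\cP;\kappa_0,\dots,\kappa_N) = 0$ for all $\kappa_0,\dots,\kappa_N > 0$ in the relevant parameter range. Hence it suffices to show that the $N$-L\'evy property of $\{\cP_n\}$ is equivalent to the vanishing of $\Sep(\cP;\kappa_0,\dots,\kappa_N)$, and this is exactly the point where the limit formula is used.

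First I would prove that the $N$-L\'evy property forces $\Sep(\cP;\kappa_0,\dots,\kappa_N) = 0$. Fix $\kappa_0,\dots,\kappa_N > 0$ with $\sum_{i=0}^N \kappa_i < 1$. By Theorem \ref{thm:lim-Sep},
\[
\Sep(\cP;\kappa_0,\dots,\kappa_N)
= \lim_{\varepsilon\to 0+}\limsup_{n\to\infty}
\Sep(\cP_n;\kappa_0-\varepsilon,\dots,\kappa_N-\varepsilon).
\]
For every sufficiently small $\varepsilon > 0$ the shifted parameters still satisfy $\sum_{i=0}^N(\kappa_i-\varepsilon) < 1$, so the $N$-L\'evy assumption gives $\lim_{n\to\infty}\Sep(\cP_n;\kappa_0-\varepsilon,\dots,\kappa_N-\varepsilon) = 0$. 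Thus each inner $\limsup$ vanishes, and the outer limit is $0$.

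For the converse I would exploit monotonicity of $\Sep$ in the $\kappa_i$. Assume $\Sep(\cP;\kappa_0,\dots,\kappa_N) = 0$ and fix $\kappa_0,\dots,\kappa_N > 0$ with $\sum_{i=0}^N \kappa_i < 1$. The limit formula again yields $\lim_{\varepsilon\to 0+}\limsup_{n\to\infty}\Sep(\cP_n;\kappa_0-\varepsilon,\dots,\kappa_N-\varepsilon) = 0$. Since decreasing an argument only increases $\Sep$, we have $\Sep(\cP_n;\kappa_0,\dots,\kappa_N) \le \Sep(\cP_n;\kappa_0-\varepsilon,\dots,\kappa_N-\varepsilon)$, hence
\[
\limsup_{n\to\infty}\Sep(\cP_n;\kappa_0,\dots,\kappa_N)
\le \limsup_{n\to\infty}\Sep(\cP_n;\kappa_0-\varepsilon,\dots,\kappa_N-\varepsilon)
\]
for each small $\varepsilon$; letting $\varepsilon \to 0+$ forces the left-hand side to $0$, which is the $N$-L\'evy property. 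Combining this equivalence with the two structural reductions of the first paragraph gives $(1)\Longleftrightarrow(2)$.

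The argument is short once the earlier machinery is granted, so the only real care is bookkeeping with the $\varepsilon$-shifts, handled by the two monotonicity observations above. I would flag one minor mismatch of parameter ranges: Definition \ref{defn:N-Levy} constrains $\sum_{i=0}^N\kappa_i$, whereas Lemma \ref{lem:Sep-fin}(2) constrains $\sum_{i=1}^N\kappa_i$. This is reconciled again by monotonicity in $\kappa_0$: given $\kappa_0,\dots,\kappa_N$ with $\sum_{i=1}^N\kappa_i<1$, choose $\kappa_0' \in (0,\,1-\sum_{i=1}^N\kappa_i)$, so that $(\kappa_0',\kappa_1,\dots,\kappa_N)$ lies in the $N$-L\'evy range; since $\Sep(\cP;\kappa_0,\dots,\kappa_N) \le \Sep(\cP;\kappa_0',\kappa_1,\dots,\kappa_N) = 0$ when $\kappa_0 \ge \kappa_0'$ (and the case $\kappa_0 < \kappa_0'$ is already admissible), the vanishing extends to the larger range required by the lemma.
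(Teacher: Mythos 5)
Your proof is correct and takes essentially the same route as the paper's: the paper's own (very terse) proof likewise combines Theorem \ref{thm:lim-Sep}, Lemma \ref{lem:Sep-fin}(2), and Proposition \ref{prop:fin-ex} to identify (1) with $\#\cP \le N$ and then with (2). Your $\varepsilon$-monotonicity bookkeeping and your reconciliation of the parameter ranges in Definition \ref{defn:N-Levy} versus Lemma \ref{lem:Sep-fin}(2) merely make explicit what the paper leaves implicit.
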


\begin{proof}
  Theorem \ref{thm:lim-Sep} and Lemma \ref{lem:Sep-fin}(2) together
  prove the equivalence
  between (1) and $\#\cP \le N$.
  These are also equivalent to (2) due to Proposition \ref{prop:fin-ex}.
  This completes the proof.
\end{proof}

For a compact (weighted) Riemannian manifold $M$,
we denote by $\lambda_N(M)$ the $N$-th nonzero eigenvalue
of the (weighted) Laplacian on $M$.
Since a sequence of compact (weighted) Riemannian manifolds $M_n$,
$n=1,2,\dots$,
is an $N$-L\'evy family if $\lambda_N(M_n) \to +\infty$ as $n\to\infty$
(see \cite{FnSy}*{Corollary 4.3}),
we have the following corollary to Theorem \ref{thm:N-Levy}.

\begin{cor}
  Let $\{M_n\}_{n=1}^\infty$ be a sequence of compact $($weighted$)$ Riemannian
  manifolds such that $\lambda_N(M_n) \to +\infty$ as $n\to\infty$
  for a natural number $N$.
  Then, there exist a subsequence $\{M_{n_i}\}$ of $\{M_n\}$ and
  a finite extended mm-space $X$ with $\#X \le N$
  such that $\cP_{M_{n_i}}$ converges weakly to $\cP_X$ as $i\to\infty$.
\end{cor}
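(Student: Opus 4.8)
The plan is to deduce the corollary from Theorem \ref{thm:N-Levy} after replacing the given sequence by a weakly convergent subsequence. The difficulty is that Theorem \ref{thm:N-Levy} presupposes weak convergence, whereas the hypothesis only provides a spectral-gap condition on the manifolds; it gives no control on the behavior of the full sequence $\{\cP_{M_n}\}$ in $\Pi$. The compactness of $(\Pi,\rho)$ is precisely what lets me manufacture the missing convergence, and once that is in hand the theorem supplies the limit pyramid in the required form.

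First I would translate the spectral hypothesis into an $N$-L\'evy condition. By \cite{FnSy}*{Corollary 4.3}, recalled in the paragraph immediately preceding the corollary, the assumption $\lambda_N(M_n)\to+\infty$ implies that $\{M_n\}$ is an $N$-L\'evy family. Using the consistency of the separation distance of an mm-space with that of its associated pyramid, namely $\Sep(\cP_{M_n};\kappa_0,\dots,\kappa_N)=\Sep(M_n;\kappa_0,\dots,\kappa_N)$, this says exactly that the sequence of associated pyramids $\{\cP_{M_n}\}$ is an $N$-L\'evy family in the sense of Definition \ref{defn:N-Levy}.

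Next, since $(\Pi,\rho)$ is compact, the sequence $\{\cP_{M_n}\}$ has a weakly convergent subsequence $\{\cP_{M_{n_i}}\}$; write its weak limit as $\cP$. Because the $N$-L\'evy condition asserts that certain limits vanish, it is inherited by every subsequence, so $\{\cP_{M_{n_i}}\}$ is again an $N$-L\'evy family. Applying Theorem \ref{thm:N-Levy} to this weakly convergent $N$-L\'evy family, condition (1) of that theorem holds and hence so does condition (2): there exists a finite extended mm-space $X$ with $\#X\le N$ satisfying $\cP=\cP_X$. Therefore $\cP_{M_{n_i}}$ converges weakly to $\cP_X$, which is exactly the asserted conclusion. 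All the genuine content is carried by Theorem \ref{thm:N-Levy} together with the compactness of $\Pi$; the only step that merits attention is the reduction of the spectral hypothesis to the $N$-L\'evy property via the cited estimate, after which the argument is a routine combination of compactness and the theorem.
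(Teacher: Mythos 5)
Your proof is correct and follows exactly the route the paper intends: convert the spectral hypothesis into the $N$-L\'evy property via \cite{FnSy}*{Corollary 4.3}, extract a weakly convergent subsequence using the compactness of $(\Pi,\rho)$, observe that the $N$-L\'evy property passes to subsequences, and apply Theorem \ref{thm:N-Levy} to identify the limit as $\cP_X$ with $\#X \le N$. The paper leaves these steps implicit (stating the corollary directly after recalling the $N$-L\'evy criterion), so your write-up simply makes the same argument explicit, including the correct use of the consistency $\Sep(\cP_{M_n};\kappa_0,\dots,\kappa_N)=\Sep(M_n;\kappa_0,\dots,\kappa_N)$.
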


Let $*$ denote a one-point mm-space.
Note that $\cP_* = \{*\}$.

\begin{cor} \label{cor:Levy}
  Let $\cP_n$, $n=1,2,\dots$, be pyramids.
  Then, the following {\rm(1)}, {\rm(2)}, and {\rm(3)} are equivalent to each other.
  \begin{enumerate}
  \item $\{\cP_n\}$ is a L\'evy family.
  \item $\cP_n$ converges weakly to $\cP_*$ as $n\to\infty$.
  \item $\lim_{n\to\infty} \ObsDiam(\cP_n;-\kappa) = 0$ for any $\kappa > 0$.
  \end{enumerate}
\end{cor}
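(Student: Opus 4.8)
The plan is to prove the three-way equivalence by establishing a cycle of implications, exploiting the fact that all the machinery for these invariants has just been developed. The most economical route is to prove $(1)\Longleftrightarrow(2)$ as a special case of the $N$-L\'evy family theorem, and then to close the loop with $(2)\Longleftrightarrow(3)$ using the limit formula for the observable diameter together with the proposition relating observable diameter and separation distance.

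First I would handle $(1)\Longleftrightarrow(2)$. A L\'evy family is by definition a $1$-L\'evy family (Definition \ref{defn:N-Levy}), so I want to apply Theorem \ref{thm:N-Levy} with $N=1$. That theorem, however, assumes the sequence $\{\cP_n\}$ already converges weakly to some $\cP$. To remove this hypothesis I would invoke the compactness of $(\Pi,\rho)$ from the Gromov--Shioya theorem: every subsequence of $\{\cP_n\}$ has a weakly convergent sub-subsequence. Applying Theorem \ref{thm:N-Levy} with $N=1$ to such a limit $\cP$, the $1$-L\'evy property is equivalent to the existence of a finite extended mm-space $X$ with $\#X\le 1$, i.e.\ $X$ is a one-point space, so that $\cP=\cP_X=\cP_*$. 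Thus $\{\cP_n\}$ is a L\'evy family if and only if every weakly convergent subsequence has limit $\cP_*$, which by compactness is equivalent to $\cP_n\to\cP_*$ itself; this gives $(1)\Longleftrightarrow(2)$.

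Next I would establish $(2)\Longleftrightarrow(3)$. For $(2)\implies(3)$, assume $\cP_n\to\cP_*$ weakly. Since $\ObsDiam(\cP_*;-\kappa)=\ObsDiam(*;-\kappa)=0$ for every $\kappa>0$, the limit formula for the observable diameter (Theorem \ref{thm:lim-ObsDiam}) gives
\[
0=\ObsDiam(\cP_*;-\kappa)=\lim_{\varepsilon\to 0+}\limsup_{n\to\infty}\ObsDiam(\cP_n;-(\kappa+\varepsilon)),
\]
and since $\ObsDiam(\cP_n;-\kappa)\le\ObsDiam(\cP_n;-(\kappa+\varepsilon))$ by monotonicity, this forces $\lim_{n\to\infty}\ObsDiam(\cP_n;-\kappa)=0$ for every $\kappa>0$, which is exactly $(3)$. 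For the converse $(3)\implies(1)$, I would use Proposition \ref{prop:ObsDiam-Sep-pyramid}(2): for $0<\kappa'<\kappa$ we have $\Sep(\cP_n;\kappa,\kappa)\le\ObsDiam(\cP_n;-\kappa')$, so $(3)$ drives $\Sep(\cP_n;\kappa,\kappa)\to 0$. Since $\Sep$ is monotone in each argument, this yields $\Sep(\cP_n;\kappa_0,\kappa_1)\to 0$ for all $\kappa_0,\kappa_1>0$ with $\kappa_0+\kappa_1<1$, which is precisely the $1$-L\'evy, i.e.\ L\'evy, family property. Closing $(3)\implies(1)\implies(2)\implies(3)$ completes the cycle.

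The main obstacle I anticipate is the bookkeeping around the weak-convergence hypothesis in Theorem \ref{thm:N-Levy}: the corollary as stated imposes no convergence assumption on $\{\cP_n\}$, so the compactness-plus-subsequence argument must be carried out carefully to ensure that ``every weakly convergent subsequence tends to $\cP_*$'' really upgrades to full convergence. Everything else is a routine application of monotonicity and the already-established limit formulas; the only subtlety is keeping track of the strict inequality $\kappa'<\kappa$ and the $\varepsilon$-shifts, which the right-continuity and monotonicity statements for $\ObsDiam$ handle automatically.
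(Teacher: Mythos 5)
Your overall route is essentially the paper's: the authors also obtain (1) $\Longleftrightarrow$ (2) by specializing Theorem \ref{thm:N-Levy} to $N=1$ (your compactness-plus-subsequence argument is exactly what is needed to remove the weak-convergence hypothesis of that theorem, a point the paper's one-line citation leaves implicit), and they also obtain the equivalence with (3) from Proposition \ref{prop:ObsDiam-Sep-pyramid} together with the monotonicity of $\Sep(\cP;\kappa_0,\kappa_1)$ in $\kappa_0,\kappa_1$ --- that is precisely your step (3) $\implies$ (1). The only structural difference is how the cycle is closed: you prove (2) $\implies$ (3) via the limit formula (Theorem \ref{thm:lim-ObsDiam}), whereas the paper proves (1) $\implies$ (3) directly from part (1) of Proposition \ref{prop:ObsDiam-Sep-pyramid}, namely $\ObsDiam(\cP_n;-2\kappa)\le\Sep(\cP_n;\kappa,\kappa)$. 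Both closures are legitimate; the paper's avoids invoking the limit formula for this direction, while yours trades that for a direct use of the machinery of Section \ref{sec:ObsDiam}.

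One step of your write-up is wrong as stated, though easily repaired. In (2) $\implies$ (3) you invoke ``$\ObsDiam(\cP_n;-\kappa)\le\ObsDiam(\cP_n;-(\kappa+\varepsilon))$ by monotonicity.'' The monotonicity goes the other way: the observable diameter is \emph{nonincreasing} in $\kappa$, so in fact $\ObsDiam(\cP_n;-\kappa)\ge\ObsDiam(\cP_n;-(\kappa+\varepsilon))$, and the inequality you wrote cannot be used to pass from the $\varepsilon$-shifted limsup to the unshifted one. The correct deduction from the limit formula is: the function $g(\varepsilon):=\limsup_{n\to\infty}\ObsDiam(\cP_n;-(\kappa+\varepsilon))$ is nonnegative and nonincreasing in $\varepsilon$, so $\lim_{\varepsilon\to 0+}g(\varepsilon)=\sup_{\varepsilon>0}g(\varepsilon)$; since this limit equals $\ObsDiam(\cP_*;-\kappa)=0$, we get $g(\varepsilon)=0$ for \emph{every} $\varepsilon>0$. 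Now, given any $\kappa>0$, apply this with $\kappa$ replaced by $\kappa/2$ and $\varepsilon=\kappa/2$ to conclude $\limsup_{n\to\infty}\ObsDiam(\cP_n;-\kappa)=0$. With that correction (or by substituting the paper's route through Proposition \ref{prop:ObsDiam-Sep-pyramid}(1) for the implication toward (3)), your proof is complete.
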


\begin{proof}
  `(1) $\Longleftrightarrow$ (2)' follows from Theorem \ref{thm:N-Levy}.
  `(1) $\Longleftrightarrow$ (3)' follows from
  Proposition \ref{prop:ObsDiam-Sep-pyramid},
  where we use the monotonicity of $\Sep(\cP;\kappa_0,\kappa_1)$
  in $\kappa_0,\kappa_1$.
  This completes the proof.
\end{proof}

As an application of Theorem \ref{thm:N-Levy}, we prove the following corollary,
which is an extension of \cite{FnSy}*{Theorem 4.4}.
The proof here is much easier than that in \cite{FnSy}.

\begin{cor} \label{cor:shrink}
  Let $\{\cP_n\}_{n=1}^\infty$ be an $N$-L\'evy family of pyramids
  such that
  \begin{equation} \label{eq:shrink}
  \ObsDiam(\cP_n;-\kappa) < +\infty
  \end{equation}
  for any $\kappa$ with $0 < \kappa < 1$ and for any $n$.
  Then we have one of the following {\rm(1)} and {\rm(2)}.
  \begin{enumerate}
  \item $\{\cP_n\}$ is a L\'evy family.
  \item There is a subsequence $\{\cP_{n_i}\}_{i=1}^\infty$ of $\{\cP_n\}$
    and a sequence of real numbers $t_i$ with $0 < t_i \le 1$, $i=1,2,\dots$,
    such that $t_i\cP_{n_i}$ converges weakly to $\cP_X$
    for some finite mm-space $X$ with $2 \le \#X \le N$.
  \end{enumerate}
\end{cor}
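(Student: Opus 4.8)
The plan is to exploit the compactness of the moduli space $(\Pi,\rho)$ of pyramids together with the scale-invariance of the observable diameter. The hypothesis \eqref{eq:shrink} ensures that for each $n$ the quantity $\ObsDiam(\cP_n;-\kappa)$ is finite for all $\kappa \in (0,1)$, so we may normalize by a suitable scale factor. I would first fix some reference level, say $\kappa = 1/2$, and set
\[
t_n := \frac{1}{\max\{\ObsDiam(\cP_n;-1/2),\,1\}},
\]
so that $0 < t_n \le 1$, and consider the rescaled pyramids $t_n\cP_n$. By Proposition \ref{prop:ObsDiam-scale} the observable diameter of $t_n\cP_n$ at level $-1/2$ is then bounded (indeed equals $\min\{1,\dots\}$), which keeps the rescaled sequence from escaping to the maximal pyramid in the relevant range.

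Next I would pass to the dichotomy. Using compactness of $(\Pi,\rho)$ (the Gromov--Shioya theorem), extract a subsequence along which $t_n\cP_n$ converges weakly to some limit pyramid $\cP$. The goal is to show that either $\{\cP_n\}$ is already a L\'evy family (case (1)), or the limit $\cP$ is the pyramid associated with a finite mm-space $X$ with $2 \le \#X \le N$ (case (2)). The key structural input is Theorem \ref{thm:N-Levy}: since $\{\cP_n\}$ is an $N$-L\'evy family, so is any rescaling $\{t_n\cP_n\}$ with $t_n \le 1$ (the separation distance is monotone under scaling by Proposition \ref{prop:Sep-scale}, so shrinking only decreases it, preserving the $N$-L\'evy condition). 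Hence by Theorem \ref{thm:N-Levy} the limit $\cP$ equals $\cP_X$ for a finite \emph{extended} mm-space $X$ with $\#X \le N$. The two tasks that remain are to rule out the extended (infinite-distance) case and to handle the borderline $\#X = 1$.

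The main obstacle will be showing that $X$ is a genuine finite mm-space, i.e.\ that no distance in $X$ is $+\infty$. This is exactly where the normalization pays off: the bound $\ObsDiam(t_n\cP_n;-1/2) \le 1$ passes to the limit via the limit formula (Theorem \ref{thm:lim-ObsDiam}), giving $\ObsDiam(\cP;-1/2) \le 1 < +\infty$. If some distance in $X$ were infinite, then for small enough $\kappa$ one could separate mass into two clusters at infinite distance, forcing $\ObsDiam(\cP_X;-\kappa) = +\infty$ for $\kappa$ near the corresponding mass threshold, contradicting finiteness of the observable diameter at the relevant levels. I would make this precise using Proposition \ref{prop:ObsDiam-Sep-pyramid}, which relates observable diameter and separation distance, together with Lemma \ref{lem:Sep-fin} characterizing $\#\cP$; finite observable diameter at all $\kappa \in (0,1)$ forces all pairwise distances in $X$ to be finite, so $X$ is an honest mm-space.

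Finally I would dispose of the trivial case. If $\#X = 1$, then $\cP = \cP_*$, so $t_n\cP_n$ converges to $\cP_*$; since $t_n \le 1$, monotonicity of the observable diameter under scaling forces $\ObsDiam(\cP_n;-\kappa) \to 0$ as well (the rescaling only shrinks it), and by Corollary \ref{cor:Levy} the original sequence $\{\cP_n\}$ is a L\'evy family, which is alternative (1). Otherwise $\#X \ge 2$, and combined with $\#X \le N$ we land in alternative (2) with the subsequence $\{\cP_{n_i}\}$ and scale factors $t_i := t_{n_i}$ furnished above. Assembling these cases completes the proof; the delicate point throughout is the careful interplay between the scaling normalization and the limit formulas, ensuring that the finiteness hypothesis \eqref{eq:shrink} survives the passage to the weak limit.
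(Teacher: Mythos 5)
Your skeleton (rescale, extract a weakly convergent subsequence by compactness of $(\Pi,\rho)$, apply Theorem \ref{thm:N-Levy} to the rescaled sequence, then rule out infinite distances and the one-point case) is the same as the paper's, but your choice of normalization breaks at the crucial step. Fixing the level $\kappa=1/2$ and setting $t_n:=1/\max\{\ObsDiam(\cP_n;-1/2),1\}$ only gives control of the limit pyramid at level $1/2$; it gives no control at small $\kappa$, which is exactly where infinite distances live. Concretely, let $X_n$ be the two-point mm-space with masses $1/4$ and $3/4$ at mutual distance $n$. Then $\{\cP_{X_n}\}$ is a $2$-L\'evy family satisfying \eqref{eq:shrink} (every mm-space has finite observable diameter), it is not a L\'evy family since $\ObsDiam(X_n;-\kappa)=n$ for $\kappa<1/4$, and $\ObsDiam(X_n;-1/2)=0$, so your recipe gives $t_n=1$ and $t_n\cP_{X_n}=\cP_{X_n}$ converges weakly to $\cP_Y$ for the \emph{extended} two-point space $Y$ with an infinite distance. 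Your claimed contradiction never materializes: the hypothesis \eqref{eq:shrink} is a statement for each fixed $n$ and does not pass to the limit, and $\ObsDiam(\cP_Y;-\kappa)=+\infty$ for $\kappa<1/4$ is perfectly consistent with the only bound you actually have, namely $\ObsDiam(\cP_Y;-1/2)\le 1$. So the sentence ``finite observable diameter at all $\kappa\in(0,1)$ forces all pairwise distances in $X$ to be finite'' conflates finiteness for each $\cP_n$ (your hypothesis) with finiteness for the limit pyramid (which may fail).

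The missing idea is that the normalization level must be adapted to the atom masses of the limit. The paper first lets the \emph{unscaled} subsequence converge to $\cP_Y$ with $Y$ finite extended and $2\le\#Y\le N$ (the lower bound comes from assuming alternative (1) fails and invoking Corollary \ref{cor:Levy}), then chooses $\kappa<\min_{y\in Y}\mu_Y(\{y\})$, and only then normalizes: if $\ObsDiam(\cP_n;-\kappa)$ stays bounded, then Theorem \ref{thm:lim-ObsDiam} makes $\ObsDiam(Y;-\kappa')=\diam(Y)$ finite for $0<\kappa'<\kappa$, so $Y$ is already an honest mm-space and $t_i=1$ works; otherwise one sets $t_n:=\ObsDiam(\cP_n;-\kappa)^{-1}$ along a subsequence where this is $\le 1$. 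Since the new limit $X$ satisfies $X\prec Y$, the same $\kappa$ lies below the minimal atom mass of $X$, so $\diam(X)=\ObsDiam(\cP_X;-\kappa)$, and the limit formula combined with monotonicity in $\kappa$ gives $\diam(X)\le\lim_{\delta\to 0+}\liminf_{n\to\infty}\ObsDiam(\cP_n;-(\kappa+\delta))/\ObsDiam(\cP_n;-\kappa)\le 1$, while $\ObsDiam(t_n\cP_n;-\kappa)=1$ rules out $\#X=1$. A second, smaller defect: in your case $\#X=1$ you conclude at best that the chosen \emph{subsequence} is a L\'evy family (your parenthetical ``the rescaling only shrinks it'' argues in the wrong direction, though your specific $t_n$ can be used to repair this along the subsequence), which yields neither alternative (1), a statement about the whole sequence, nor alternative (2); the clean fix, as in the paper, is to assume from the outset that (1) fails and to work along a subsequence witnessing that failure.
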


Note that the observable diameter of an mm-space is always finite,
so that \eqref{eq:shrink} holds for any sequence of mm-spaces.

\begin{proof}
  Let $\{\cP_n\}_{n=1}^\infty$ be an $N$-L\'evy family of pyramids
  that is not a L\'evy family.
  Taking a subsequence of $\{\cP_n\}$, we assume that $\{\cP_n\}$
  converges weakly to a pyramid $\cP$.
  By Theorem \ref{thm:N-Levy},
  there is a finite extended mm-space $Y$ such that $\cP = \cP_Y$
  with $2 \le \#Y \le N$.
  Take a real number $\kappa$ with $0 < \kappa < \min_{y \in Y} \mu_Y(\{y\})$.
  If $\ObsDiam(\cP_n;-\kappa)$ is bounded from above for all $n$,
  then Theorem \ref{thm:lim-ObsDiam} implies
  the finiteness of $\ObsDiam(Y;-\kappa') = \diam(Y)$ for $0 < \kappa' < \kappa$,
  so that
  $Y$ is an mm-space and we have the theorem.
  Assume that $\ObsDiam(\cP_n;-\kappa)$ is unbounded.
  Replacing $\{\cP_n\}$ with a subsequence
  we assume that $\ObsDiam(\cP_n;-\kappa) \ge 1$ for any $n$.
  Setting $t_n := \ObsDiam(\cP_n;-\kappa)^{-1}$,
  we have $0 < t_n \le 1$ for any $n$ by \eqref{eq:shrink}.
  We replace $\{t_n\cP_n\}$ with a weakly convergent subsequence of it.
  By Theorem \ref{thm:N-Levy},
  there is a finite extended mm-space $X$ such that
  $t_n\cP_n$ converges weakly to $\cP_X$ as $n\to\infty$.
  Since $t_n\cP_n \subset \cP_n$, we have $\cP_X \subset \cP_Y$
  and so $X \prec Y$.
  In particular, $\kappa < \min_{x \in X} \mu_X(\{x\})$.
  Therefore, applying Theorem \ref{thm:lim-ObsDiam} yields
  \begin{align*}
    \diam(X) &= \ObsDiam(\cP_X;-\kappa)\\
    &= \lim_{\delta\to 0+} \liminf_{n\to\infty} \ObsDiam(t_n\cP_n;-(\kappa+\delta)) \\
    &= \lim_{\delta\to 0+} \liminf_{n\to\infty}
    \frac{\ObsDiam(\cP_n;-(\kappa+\delta))}{\ObsDiam(\cP_n;-\kappa)}
    \le 1,
  \end{align*}
  so that $X$ is an mm-space.
  Moreover, since $\ObsDiam(t_n\cP_n;-\kappa) = 1$,
  $\{t_n\cP_n\}$ is not a L\'evy family (see Corollary \ref{cor:Levy})
  and $X$ consists of at least two points.
  This completes the proof.
\end{proof}

\section{Dissipation and Phase Transition Property} \label{sec:phase}

\begin{defn}[Dissipation]
  Let $\{\cP_n\}_{n=1}^\infty$ be a sequence of pyramids
  and let $0 < \delta \le +\infty$.
  We say that $\{\cP_n\}$ \emph{$\delta$-dissipates} if
  \[
  \liminf_{n\to\infty} \Sep(\cP_n;\kappa_0,\kappa_1,\dots,\kappa_N) \ge \delta
  \]
  for any $\kappa_0,\kappa_1,\dots,\kappa_N > 0$ with $\sum_{i=0}^N \kappa_i < 1$.
  We say that $\{\cP_n\}$ \emph{weakly dissipates} if
  \[
  \liminf_{n\to\infty} \Sep(\cP_n;\kappa_0,\kappa_1,\dots,\kappa_N) > 0
  \]
  for any $\kappa_0,\kappa_1,\dots,\kappa_N > 0$ with $\sum_{i=0}^N \kappa_i < 1$.
\end{defn}

%

\begin{defn}[Dissipated pyramid]
  Let $\cP$ be a pyramid and let $0 < \delta \le +\infty$.
  $\cP$ is said to be \emph{$\delta$-dissipated}
  if $\cP$ contains all mm-spaces with diameter $\le \delta$.
   We say that $\cP$ is \emph{weakly dissipated} if
  \[
  \Sep(\cP;\kappa_0,\kappa_1,\dots,\kappa_N) > 0
  \]
  for any $\kappa_0,\kappa_1,\dots,\kappa_N > 0$ with $\sum_{i=0}^N \kappa_i < 1$.
\end{defn}

Note that $\cP$ is $\infty$-dissipated if and only if $\cP = \cX$.

\begin{lem} \label{lem:dissipate}
  Let $\{\cP_n\}_{n=1}^\infty$ be a sequence of pyramids
  converging weakly to a pyramid $\cP$, and let $0 < \delta \le +\infty$.
  Then, the following {\rm(1)} and {\rm(2)} are equivalent to each other.
  \begin{enumerate}
  \item $\{\cP_n\}$ $\delta$-dissipates {\rm(}resp.~weakly dissipates{\rm)}.
  \item $\cP$ is $\delta$-dissipated {\rm(}resp.~weakly dissipated{\rm)}.
  \end{enumerate}
\end{lem}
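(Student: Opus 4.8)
The plan is to reduce the whole statement to the limit formula for the separation distance (Theorem~\ref{thm:lim-Sep}), after re-expressing the two notions of dissipation purely in terms of $\Sep$. For the limit pyramid $\cP$ consider the condition $(\star_\delta)$: $\Sep(\cP;\kappa_0,\dots,\kappa_N)\ge\delta$ for every $N\ge1$ and all $\kappa_0,\dots,\kappa_N>0$ with $\sum_{i=0}^N\kappa_i<1$; and let $(\star_0)$ denote the same condition with ``$\ge\delta$'' weakened to ``$>0$''. By the very definition of a weakly dissipated pyramid, $(\star_0)$ is exactly assertion (2) in the weak case, so there the only thing to prove is the equivalence of (1) with $(\star_0)$. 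In the $\delta$-case I will first prove the analogous equivalence of (1) with $(\star_\delta)$, and then separately establish Claim A: a pyramid $\cP$ is $\delta$-dissipated if and only if it satisfies $(\star_\delta)$.

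First I would treat the bridge between $\{\cP_n\}$ and $\cP$. For ``(1)$\Rightarrow(\star)$'' fix a valid tuple $(\kappa_i)$; since decreasing any argument can only increase $\Sep$, the tuple $(\kappa_i-\varepsilon)$ is again valid and $\liminf_n\Sep(\cP_n;\kappa_i-\varepsilon)\ge\liminf_n\Sep(\cP_n;\kappa_i)$, which is $\ge\delta$ (resp.\ $>0$) by (1); letting $\varepsilon\to0+$ in Theorem~\ref{thm:lim-Sep} gives $\Sep(\cP;\kappa_i)\ge\delta$ (resp.\ $>0$). For the converse I would use a ``slack'' trick: given a valid $(\kappa_i)$, enlarge it to $(\kappa_i+\varepsilon_0)$, still valid for small $\varepsilon_0$; applying $(\star)$ and the two-sided limit formula, $\Sep(\cP;\kappa_i+\varepsilon_0)=\inf_{\varepsilon>0}\liminf_n\Sep(\cP_n;\kappa_i+\varepsilon_0-\varepsilon)$ is $\ge\delta$ (resp.\ $>0$), so every term of the infimum is, and reading it off at $\varepsilon=\varepsilon_0$ yields $\liminf_n\Sep(\cP_n;\kappa_i)\ge\delta$ (resp.\ $>0$). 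This disposes of the weak case entirely and reduces the $\delta$-case to Claim A.

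For Claim A, the direction ``$\delta$-dissipated $\Rightarrow(\star_\delta)$'' is easy: given a valid $(\kappa_i)$, choose weights $m_i\ge\kappa_i$ with $\sum_i m_i=1$ and form the $(N{+}1)$-point mm-space $\Delta$ all of whose pairwise distances equal $\delta$; then $\diam\Delta=\delta$, so $\Delta\in\cP$, and evaluating $\Sep$ on its vertices gives $\Sep(\Delta;\kappa_i)=\delta$, whence $\Sep(\cP;\kappa_i)\ge\delta$. The substantial direction is ``$(\star_\delta)\Rightarrow\delta$-dissipated''. Here I would first reduce to simplices: an arbitrary mm-space of diameter $\le\delta$ is a $\square$-limit of finite ones of diameter $\le\delta$, and any finite mm-space with weights $m_i$ and mutual distances $\le\delta$ is Lipschitz-dominated by the equidistant simplex $\Delta(\delta;m_0,\dots,m_N)$ carrying the same weights; since $\cP$ is downward closed and $\square$-closed, it suffices to prove $\Delta(\delta;m_0,\dots,m_N)\in\cP$ for all $N$ and all weights. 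To produce it I would apply $(\star_\delta)$ with thresholds $\kappa_i=m_i-\eta$, so that $\sum_i\kappa_i=1-(N{+}1)\eta<1$ and the \emph{leftover mass is of order $\eta$}: this yields $X\in\cP$ with closed sets $B_0,\dots,B_N$ of measure close to $m_i$ and mutual distance $>\rho$ for some $\rho$ arbitrarily close to $\delta$. The tent functions $g_i(x):=\min\{d_X(x,B_i),\rho\}$ assemble into a $1$-Lipschitz map $G=(g_0,\dots,g_N):X\to(\R^{N+1},\|\cdot\|_\infty)$ collapsing each $B_i$ to a vertex $p_i$, the $p_i$ forming a $\rho$-equidistant simplex; thus $Y:=(\R^{N+1},\|\cdot\|_\infty,G_*\mu_X)\prec X$ lies in $\cP$, and $G_*\mu_X$ differs from $\sum_i m_i\delta_{p_i}$ only by the small leftover mass. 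By Lemma~\ref{lem:box-dP} this makes $\square(Y,\Delta(\rho;m_i))$ small; letting $\eta\to0$ and $\rho\to\delta$ and using $\square$-closedness of $\cP$ gives $\Delta(\delta;m_i)\in\cP$.

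The main obstacle is exactly this last construction: realizing the simplex inside $\cP$ while respecting both the $1$-Lipschitz constraint and the requirement $\sum_i m_i=1$, which clashes with the strict inequality $\sum_i\kappa_i<1$ built into $\Sep$. The tent-map push into $(\R^{N+1},\|\cdot\|_\infty)$ resolves the Lipschitz bookkeeping automatically (separated clumps land on a genuine equidistant simplex), and choosing the thresholds $\kappa_i$ just below $m_i$ forces the leftover mass to be negligible; working throughout up to small $\square$-error and invoking closedness of $\cP$ sidesteps the need to hit the weights $m_i$ exactly, which is what would otherwise fail in the presence of atoms. Finally, the case $\delta=+\infty$ follows by running the same simplex argument with diameter $R$ for every finite $R$ and letting $R\to\infty$, using that $\cP$ is $\infty$-dissipated precisely when $\cP=\cX$.
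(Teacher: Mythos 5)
Your proof is correct, but its core is genuinely different from the paper's. The bridging step is the same in both: you and the paper use Theorem \ref{thm:lim-Sep} (with the same $\varepsilon$-monotonicity bookkeeping) to show that $\delta$-dissipation (resp.\ weak dissipation) of $\{\cP_n\}$ is equivalent to the condition $\Sep(\cP;\kappa_0,\dots,\kappa_N)\ge\delta$ (resp.\ $>0$) on the limit pyramid, which already settles the weak case since that condition is the very definition of a weakly dissipated pyramid. The divergence is in the $\delta$-case: the paper passes to a sequence of mm-spaces approximating $\cP$ and then cites \cite{Sy:book}*{Proposition 8.5} to identify the $\Sep$-condition with the containment condition defining ``$\delta$-dissipated''; you instead prove this equivalence from scratch (your Claim A). Your easy direction (evaluate $\Sep$ on equidistant simplices lying in $\cP$) and your hard direction (reduce membership of all diameter-$\le\delta$ spaces to membership of equidistant simplices via finite $\square$-approximation and Lipschitz domination, then manufacture those simplices inside $\cP$ by pushing a near-optimal separated family forward under the tent map $G=(g_0,\dots,g_N):X\to(\R^{N+1},\|\cdot\|_\infty)$, and conclude with Lemma \ref{lem:box-dP} and $\square$-closedness) are sound; the $\varepsilon$-slack handling of the constraint $\sum_i\kappa_i<1$ versus $\sum_i m_i=1$, and the $\delta=+\infty$ case via $R$-simplices for all finite $R$, are handled correctly. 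What the two approaches buy: the paper's proof is a few lines but rests on an external result of the book, whereas yours is self-contained within this paper's toolkit (limit formula, Lemma \ref{lem:box-dP}, pyramid axioms) and in effect reproves the cited proposition; the price is that you implicitly use the standard fact that every mm-space of diameter $\le\delta$ is a $\square$-limit of finite mm-spaces of diameter $\le\delta$, which is routine but should be stated as a citation or a short lemma if this argument were inserted into the paper.
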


\begin{proof}
  The lemma for the weak dissipation follows from Theorem \ref{thm:lim-Sep}.

  We prove the lemma for the $\delta$-dissipation.
  Theorem \ref{thm:lim-Sep} implies that
  (1) is equivalent to the following:
  \begin{enumerate}
  \item[(3)] For any $\kappa_0,\dots,\kappa_N > 0$ with $\sum_{i=0}^N \kappa_i < 1$,
  we have
  \[
  \Sep(\cP;\kappa_0,\dots,\kappa_N) \ge \delta.
  \]
  \end{enumerate}
  There is a sequence of mm-spaces $X_n$, $n=1,2,\dots$,
  apprioximating $\cP$.
  (3) is equivalent to
  \[
  \liminf_{n\to\infty} \Sep(X_n;\kappa_0,\dots,\kappa_N) \ge \delta
  \]
  for any $\kappa_0,\dots,\kappa_N > 0$ with $\sum_{i=0}^N \kappa_i < 1$.
  Due to \cite{Sy:book}*{Proposition 8.5},
  this is equivalent to (2).
  The proof is completed.
\end{proof}

\begin{defn}[Phase transition property] \label{defn:phase}
  Let $\{\cP_n\}_{n=1}^\infty$ be a sequence of pyramids.
  We say that $\{\cP_n\}$ has the \emph{phase transition property}
  if there exists a sequence of positive real numbers $c_n$, $n=1,2,\dots$,
  satisfying the following (1) and (2).
  \begin{enumerate}
  \item For any sequence of positive numbers $t_n$ with
  $t_n/c_n \to 0$, the sequence $\{t_n\cP_n\}$ is a L\'evy family.
  \item For any sequence of positive numbers $t_n$ with
  $t_n/c_n \to +\infty$, the sequence $\{t_n\cP_n\}$ $\infty$-dissipates.
  \end{enumerate}
  We call such a sequence $\{c_n\}$ a sequence of \emph{critical scale order}.
  We say that a sequence of mm-spaces $X_n$ has
  the \emph{phase transition property}
  if so does $\{\cP_{X_n}\}$.
\end{defn}

\begin{rem}
  (1) of Definitnion \ref{defn:phase} is equivalent to
  \[
  \limsup_{n\to\infty} \ObsDiam(c_n\cP_n;-\kappa) < +\infty
  \]
  for any $\kappa$ with $0 < \kappa < 1$.
  (2) is equivalent to the weak dissipation property of $\{c_n\cP_n\}$.
\end{rem}

The following is a key to the proof of Theorem \ref{thm:phase}.

\begin{lem} \label{lem:phase}
  Let $\kappa,\kappa_0,\kappa_1,\dots,\kappa_N$
  be any positive real numbers with $N \ge 1$ such that
  \[
  1-\frac{1}{N}\left(1-\sum_{i=0}^N \kappa_i\right) \le \kappa < 1.
  \]
  Then we have
  \[
  \Sep(X;\kappa_0,\kappa_1,\dots,\kappa_N) \ge \ObsDiam(X;-\kappa)
  \]
  for any mm-space $X$.
\end{lem}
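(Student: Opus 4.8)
The plan is to reduce the inequality to a one-dimensional statement through the very definition of the observable diameter. Since $\ObsDiam(X;-\kappa)=\sup_f\diam(f_*\mu_X;1-\kappa)$ over $1$-Lipschitz maps $f:X\to\R$, it suffices to fix one such $f$, write $\nu:=f_*\mu_X$ and $L:=\diam(\nu;1-\kappa)$, and exhibit $N+1$ Borel subsets of $X$ of $\mu_X$-measure $\ge\kappa_i$ that are pairwise $L$-separated; taking the supremum over $f$ then gives the lemma. The construction lives on $\R$ and is transported back by $f$: if I can find intervals $J_0<J_1<\dots<J_N$ (ordered along $\R$), pairwise separated by distance $\ge L$, with $\nu(J_i)\ge\kappa_i$, then $A_i:=f^{-1}(J_i)$ are Borel with $\mu_X(A_i)=\nu(J_i)\ge\kappa_i$, and the $1$-Lipschitz bound $d_X(x,y)\ge|f(x)-f(y)|$ forces $d_X(A_i,A_j)\ge\mathrm{dist}(J_i,J_j)\ge L$, so that $\Sep(X;\kappa_0,\dots,\kappa_N)\ge L$.

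I would produce these intervals by a greedy sweep from left to right, inserting a gap of length exactly $L$ between consecutive chosen intervals. Put $b_0:=-\infty$; having reached $b_i$, let $c_i:=\inf\{\,t\ge b_i:\nu([b_i,t])\ge\kappa_i\,\}$, set $J_i:=[b_i,c_i]$ and $b_{i+1}:=c_i+L$ for $i=0,\dots,N-1$, and finally set $J_N:=[b_N,+\infty)$. By construction $\nu(J_i)\ge\kappa_i$ for $i<N$, and consecutive intervals are separated by the length-$L$ gap $(c_i,c_i+L)$, so all pairs are at distance $\ge L$. The only thing left to check is that the sweep never runs out of mass, i.e.\ that each $c_i$ is finite and that the final piece still carries mass $\ge\kappa_N$.

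This verification is the crux, and it is exactly where the hypothesis is used. The decisive input is that, because $L=\diam(\nu;1-\kappa)$, every set of diameter $<L$ has $\nu$-measure $<1-\kappa$; letting the length increase to $L$ yields $\nu\big([c_i,c_i+L)\big)\le 1-\kappa$ for each gap. I would then tile $(-\infty,b_N)$ as the disjoint union $\bigsqcup_{i=0}^{N-1}[b_i,c_i)\ \sqcup\ \bigsqcup_{i=0}^{N-1}[c_i,c_i+L)$; the point of this particular tiling is that it deposits any atom sitting at a cut point $c_i$ into a \emph{gap} block rather than into an interval block, so that minimality of $c_i$ gives the clean bound $\nu([b_i,c_i))\le\kappa_i$. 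Summing, $\nu((-\infty,b_N))\le\sum_{i=0}^{N-1}\kappa_i+N(1-\kappa)$, whence
\[
\nu(J_N)=1-\nu((-\infty,b_N))\ \ge\ 1-\sum_{i=0}^{N-1}\kappa_i-N(1-\kappa)\ \ge\ \kappa_N,
\]
the last inequality being a restatement of the hypothesis $N(1-\kappa)+\sum_{i=0}^N\kappa_i\le 1$; the same estimate applied to the partial sweeps shows $\nu([b_i,\infty))\ge\kappa_i$, so every $c_i$ is finite. The main obstacle I anticipate is precisely this bookkeeping with boundary atoms: a naive greedy choice would let an atom at $c_i$ inflate $\nu(J_i)$ beyond $\kappa_i$ and could exhaust the budget in the tight (equality) case, and the tiling above is the device that circumvents it. Finally, if $L=\diam(\nu;1-\kappa)=+\infty$, I would run the identical argument with an arbitrary finite $L'$ replacing $L$—legitimate since then every finite interval has measure $<1-\kappa$—and let $L'\to\infty$ to conclude $\Sep=+\infty$.
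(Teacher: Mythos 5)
Your proposal is correct and is essentially the paper's own argument: the paper performs the same greedy left-to-right sweep (its cut points $a_i$ are your $c_i$, with gaps of length $r$ inserted between consecutive blocks) and the same mass accounting, decomposing the half-line to the left of the last block into interval pieces of measure $\le\kappa_i$ (by minimality of the cut points) and gap pieces of measure $<1-\kappa$ (by the partial-diameter property), then invoking the hypothesis exactly as you do. The only cosmetic differences are that the paper works with an arbitrary $r<\ObsDiam(X;-\kappa)$ and strict inequalities instead of $L=\diam(f_*\mu_X;1-\kappa)$ itself (which also disposes of your separate $L=+\infty$ case), takes the last set to be a finite greedy interval rather than the tail, and transfers the separation back to $X$ via the domination $(\R,f_*\mu_X)\prec X$ and Proposition \ref{prop:Sep-prec} rather than by preimages.
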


\begin{proof}
  We take any real number $r$ with $0 < r < \ObsDiam(X;-\kappa)$
  and fix it.
  There is a $1$-Lipschitz function $f : X \to \R$ such that
  $\diam(f_*\mu_X;1-\kappa) > r$.  We then have the following:
  \begin{itemize}
  \item[($*$)] If a Borel subset $A \subset \R$ satisfies $\diam(A) \le r$,
    then $f_*\mu_X(A) < 1-\kappa$.
  \end{itemize}
  We define real numbers $a_0,a_1,\dots,a_N$ inductively by
  \begin{align*}
  a_0 &:= \inf\{\;a\in\R \mid f_*\mu_X(\,-\infty,a\,] \ge \kappa_0\;\},\\
  a_i &:= \inf\{\;a \ge a_{i-1}+r \mid f_*\mu_X[\,a_{i-1}+r,a\,] \ge \kappa_i\;\},
  \ i=1,2,\dots,N.
  \end{align*}
  We check the well-definedness of ${a_i}'s$.
  It is clear that $a_0$ is defined as a (finite) real number since $0 < \kappa_0 < 1$.
  Assume that $a_0,a_1,\dots,a_k$ for a number $k \le N-1$
  are defined as real numbers.
  We are going to check that $a_{k+1}$ is defined as a real number.
  For that, it suffices to prove
  \begin{equation} \label{eq:phase}
    f_*\mu_X[\,a_k+r,+\infty\,) > \kappa_{k+1}.
  \end{equation}
  By the definition of $a_i$, we have $f_*\mu_X(\,-\infty,a_0\,) \le \kappa_0$
  and $f_*\mu_X[\,a_{i-1}+r,a_i\,) \le \kappa_i$ for $i=1,2,\dots,k$.
  Also, ($*$) implies $f_*\mu_X[\,a_i,a_i+r\,] < 1-\kappa$ for $i=0,1,\dots,k$.
  We therefore have
  \begin{align*}
  f_*\mu_X(\,-\infty,a_k+r\,)
  &\le \sum_{i=0}^k f_*\mu_X[\,a_i,a_i+r\,]  +
  f_*\mu_X(\,-\infty,a_0\,) \\
  &\quad + \sum_{i=1}^k f_*\mu_X[\,a_{i-1}+r,a_i\,) \\
  &< N(1-\kappa) + \sum_{i=0}^k \kappa_i \\
  &\le 1-\sum_{i=k+1}^N \kappa_i,
  \end{align*}
  which implies \eqref{eq:phase}.

  Setting
  \begin{align*}
  A_0 &:= (\,-\infty,a_0\,],\\
  A_i &:= [\,a_{i-1}+r,a_i\,] \quad\text{for $i=1,2,\dots,N$},
  \end{align*}
  we have $f_*\mu_X(A_i) \ge \kappa_i$ and $d_\R(A_i,A_j) \ge r$
  for $i\neq j$, so that
  \[
  \Sep((\R,f_*\mu_X);\kappa_0,\dots,\kappa_1) \ge r.
  \]
  Since $(\R,f_*\mu_X) \prec X$ and by Proposition \ref{prop:Sep-prec},
  we have
  \[
  \Sep(X;\kappa_0,\dots,\kappa_1) \ge r.
  \]
  By the arbitrariness of $r$, this completes the proof.
\end{proof}

\begin{lem} \label{lem:phase2}
  Let $\cP$ be a pyramid.  Then the following {\rm(1)} and {\rm(2)}
  are equivalent to each other.
  \begin{enumerate}
  \item $\cP$ is weakly dissipated.
  \item $\ObsDiam(\cP;-\kappa) > 0$ for any $\kappa$ with $0 < \kappa < 1$.
  \end{enumerate}
\end{lem}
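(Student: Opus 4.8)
The plan is to prove the two implications separately. For $(2)\Rightarrow(1)$ I would use Lemma \ref{lem:phase}, and for $(1)\Rightarrow(2)$ I would build a $1$-Lipschitz test function directly from the separated sets that weak dissipation supplies (Proposition \ref{prop:ObsDiam-Sep-pyramid}(2) disposes of the easy range $0<\kappa<1/2$ at once). Throughout, the only real subtlety is passing between the pyramid invariants, defined as $\delta\to0+$ limits of suprema over $X\in\cP$ in Definitions \ref{defn:ObsDiam-pyramid} and \ref{defn:Sep-pyramid}, and the corresponding invariants of individual mm-spaces $X\in\cP$.

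For $(2)\Rightarrow(1)$, fix $\kappa_0,\dots,\kappa_N>0$ with $\sum_{i=0}^N\kappa_i<1$ and set $\kappa:=1-\tfrac1N\bigl(1-\sum_{i=0}^N\kappa_i\bigr)\in(0,1)$. Decreasing every weight only lowers the left endpoint in the hypothesis of Lemma \ref{lem:phase}, so for all small $\delta\ge0$ that lemma gives $\Sep(X;\kappa_0-\delta,\dots,\kappa_N-\delta)\ge\ObsDiam(X;-\kappa)$ for every $X\in\cP$. Taking $\sup_{X\in\cP}$ and then $\lim_{\delta\to0+}$, and using $\sup_{X\in\cP}\ObsDiam(X;-\kappa)\ge\ObsDiam(\cP;-\kappa)$, I obtain $\Sep(\cP;\kappa_0,\dots,\kappa_N)\ge\ObsDiam(\cP;-\kappa)>0$ by hypothesis (2). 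Since the weights were arbitrary, $\cP$ is weakly dissipated.

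For $(1)\Rightarrow(2)$, fix $\kappa\in(0,1)$ and choose an integer $N$ with $N>\kappa/(1-\kappa)$, so that $\tfrac1{N+1}<1-\kappa$. Picking $\lambda<\tfrac1{N+1}$ close to $\tfrac1{N+1}$, weak dissipation yields $\Sep(\cP;\lambda,\dots,\lambda)>0$ for the $(N+1)$-tuple, whence by Definition \ref{defn:Sep-pyramid} some $X\in\cP$ carries Borel sets $A_0,\dots,A_N$, pairwise at distance $>r$ for some $r>0$, with $\mu_X(A_i)\ge\lambda-\delta$. The key step is the test function
\[
f:=\min_{0\le i\le N}\Bigl(\tfrac{i r}{N}+d_X(\,\cdot\,,A_i)\Bigr),
\]
which is $1$-Lipschitz and, because $|i-j|\tfrac rN\le r<d_X(A_i,A_j)$, collapses each $A_i$ to the single value $\tfrac{ir}{N}$; thus $f_*\mu_X$ places mass $\ge\lambda-\delta$ at each of the $N+1$ points $0,\tfrac rN,\dots,r$, spaced by $\tfrac rN$. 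Calibrating $\lambda,\delta$ so that each cluster mass together with the total leftover mass stays below $1-\kappa$ (possible since the disjoint $A_i$ force $\mu_X(A_i)\to\tfrac1{N+1}$ and the leftover $\to0$ as $\lambda\to\tfrac1{N+1}$), no interval shorter than $\tfrac rN$ can carry mass $\ge1-\kappa$, so $\diam(f_*\mu_X;1-\kappa)\ge\tfrac rN>0$. Hence $\ObsDiam(X;-\kappa)>0$, and $\ObsDiam(\cP;-\kappa)\ge\ObsDiam(X;-\kappa)>0$ by Lemma \ref{lem:rconti-ObsDiam}(2) and Definition \ref{defn:ObsDiam-pyramid}.

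The main obstacle is exactly this construction in the regime $\kappa\ge1/2$. There a two-set separation can never force a positive observable diameter (a single $1$-Lipschitz map splits the space into at most two clusters, one of which has mass $\ge1/2\ge1-\kappa$), so one is compelled to resolve $\lceil1/(1-\kappa)\rceil\ge3$ well-separated clusters simultaneously; the images are confined to an interval of length at most $\diam(A_i\cup\cdots)$, which is why the achievable spacing degrades like $r/N$, and why the weights must be tuned near $\tfrac1{N+1}$ to keep every cluster, plus the uncontrolled leftover mass, strictly below the threshold $1-\kappa$. Once these parameters are chosen correctly the partial-diameter estimate is routine, and the function above may be viewed as the converse companion to the one built in the proof of Lemma \ref{lem:phase}.
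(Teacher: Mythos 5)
Your proof is correct. The direction $(2)\Rightarrow(1)$ coincides with the paper's: both rest on Lemma \ref{lem:phase} applied with $\kappa=1-\frac{1}{N}\bigl(1-\sum_{i=0}^N\kappa_i\bigr)$, and your explicit handling of the $\delta\to 0+$ limits in Definitions \ref{defn:ObsDiam-pyramid} and \ref{defn:Sep-pyramid} is just a more careful version of what the paper leaves implicit. The direction $(1)\Rightarrow(2)$ is where you genuinely diverge, and your route is the stronger one. The paper reduces (1) and (2) to existence statements for single spaces $X\in\cP$ and then asserts, citing Proposition \ref{prop:ObsDiam-Sep}, that $\ObsDiam(X;-\kappa)\ge\Sep(X;\kappa',\kappa')$ for $0<\kappa'<\min\{\kappa,1/2\}$. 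That application is backwards: Proposition \ref{prop:ObsDiam-Sep}(2) bounds $\ObsDiam(X;-\kappa)$ from below by $\Sep(X;a,a)$ only when $a>\kappa$, and since weak dissipation controls pairs only when $2a<1$, a two-set argument can reach only $\kappa<1/2$; indeed the paper's displayed inequality fails outright for the two-point space with masses $1/2$ at distance $1$ (there $\Sep(X;0.4,0.4)=1$ while $\ObsDiam(X;-0.9)=0$), which is exactly the obstruction you isolate in your last paragraph. Your $(N+1)$-cluster function $f=\min_i\bigl(\frac{ir}{N}+d_X(\cdot,A_i)\bigr)$, which collapses the separated sets onto an arithmetic progression of step $r/N$ and calibrates $\lambda$ near $\frac{1}{N+1}$ so that any set of diameter $<r/N$ carries mass at most $1-N(\lambda-\delta)<1-\kappa$, is precisely what is needed to cover all $\kappa\in(0,1)$. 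So your proposal does not merely take a different route: it fills a genuine gap in the paper's own proof of $(1)\Rightarrow(2)$, at the cost of a construction the paper avoids (and could not have avoided).
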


\begin{proof}
  It is easy to see that
  (1) is equivalent to the following:
  \begin{enumerate}
  \item[(1')] For any $\kappa_0,\dots,\kappa_N > 0$ with $\sum_{i=0}^N \kappa_i < 1$,
  there is an mm-space $X \in \cP$ such that
  $\Sep(X;\kappa_0,\dots,\kappa_N) > 0$.
  \end{enumerate}
  Also, (2) is equivalent to the following:
  \begin{enumerate}
  \item[(2')] For any $\kappa$ with $0 < \kappa < 1$ there is an mm-space
  $X \in \cP$ such that $\ObsDiam(X;-\kappa) > 0$.
  \end{enumerate}

  We prove (1') $\implies$ (2').
  For any given $\kappa$ with $0 < \kappa < 1$,
  we take $\kappa'$ with $0 < \kappa' < \min\{\kappa,1/2\}$.
  Proposition \ref{prop:ObsDiam-Sep} implies
  \[
  \ObsDiam(X;-\kappa) \ge \Sep(X;\kappa',\kappa'),
  \]
  which is positive for some $X \in \cP$ by (1').
  We obtain (2').

  The implication (2') $\implies$ (1')  follows from Lemma \ref{lem:phase}.
  
  This completes the proof of the proposition.
\end{proof}

\begin{lem} \label{lem:phase3}
  Let $\cP_n$, $n=1,2,\dots,$ be pyramids.
  Then the following {\rm(1)} and {\rm(2)} are equivalent to each other.
  \begin{enumerate}
  \item $\{\cP_n\}$ weakly dissipates.
  \item We have
    \[
    \liminf_{n\to\infty} \ObsDiam(\cP_n;-\kappa) > 0
    \]
    for any $\kappa$ with $0 < \kappa < 1$.
  \end{enumerate}
\end{lem}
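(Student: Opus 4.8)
The plan is to read this statement as the sequential analogue of Lemma \ref{lem:phase2}, which characterizes weak dissipation of a \emph{single} pyramid by the positivity of its observable diameter, and to transport that characterization along weakly convergent subsequences by means of the limit formula in Theorem \ref{thm:lim-ObsDiam}. Since a general sequence $\{\cP_n\}$ need not converge in $\Pi$, the statement cannot be reduced to one fixed limit pyramid; instead I would prove both implications by contradiction, in each case first extracting a subsequence on which the relevant invariant tends to $0$, and then, by the compactness of $(\Pi,\rho)$, passing to a further subsequence that converges weakly to some pyramid $\cP$. The two hypotheses are both \emph{liminf}-type conditions, so they are inherited by subsequences (a liminf can only increase under passage to a subsequence), which is what makes the subsequence argument go through.

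For the implication (1) $\implies$ (2): assuming $\{\cP_n\}$ weakly dissipates, suppose (2) fails for some $\kappa\in(0,1)$, so $\liminf_n\ObsDiam(\cP_n;-\kappa)=0$. I would choose a subsequence with $\ObsDiam(\cP_{n_i};-\kappa)\to0$ and then a further weakly convergent subsequence $\cP_{n_i}\to\cP$. Weak dissipation is inherited by this subsequence, so Lemma \ref{lem:dissipate} makes $\cP$ weakly dissipated and Lemma \ref{lem:phase2} gives $\ObsDiam(\cP;-\kappa)>0$. On the other hand, the $\limsup$-branch of Theorem \ref{thm:lim-ObsDiam} together with the monotonicity of $\ObsDiam(\cP_{n_i};-\,\cdot\,)$ in its argument yields $\ObsDiam(\cP;-\kappa)\le\limsup_i\ObsDiam(\cP_{n_i};-\kappa)=0$, a contradiction.

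For the implication (2) $\implies$ (1): assuming (2), suppose $\{\cP_n\}$ does not weakly dissipate, so $\liminf_n\Sep(\cP_n;\kappa_0,\dots,\kappa_N)=0$ for some admissible $\kappa_0,\dots,\kappa_N$ with $\sum_{i=0}^N\kappa_i<1$. I would again pass to a subsequence with $\Sep\to0$ and then to a weakly convergent subsequence $\cP_{n_i}\to\cP$. Condition (2) survives for the subsequence, and the $\liminf$-branch of Theorem \ref{thm:lim-ObsDiam} (fixing a small $\varepsilon$ with $\kappa+\varepsilon<1$ and using monotonicity to compare the shifted parameter $\kappa+\varepsilon$ with $\kappa$) gives $\ObsDiam(\cP;-\kappa)>0$ for every $\kappa\in(0,1)$. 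Then Lemma \ref{lem:phase2} makes $\cP$ weakly dissipated and Lemma \ref{lem:dissipate} makes $\{\cP_{n_i}\}$ weakly dissipate, contradicting $\Sep(\cP_{n_i};\kappa_0,\dots,\kappa_N)\to0$.

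The hard part is not any single estimate but the bookkeeping with subsequences and the correct handling of the $\varepsilon$-shifts in the limit formula: one must use the $\limsup$-branch in the first implication and the $\liminf$-branch in the second, and in each case invoke the monotonicity of the observable diameter in $\kappa$ to absorb the shift $\kappa\mapsto\kappa+\varepsilon$. Once this is set up correctly, the argument is a routine combination of Lemmas \ref{lem:phase2} and \ref{lem:dissipate}, Theorem \ref{thm:lim-ObsDiam}, and the compactness of the space of pyramids.
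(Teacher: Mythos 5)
Your proof is correct and takes essentially the same route as the paper: both reduce the sequential statement to the single-pyramid characterization of Lemma \ref{lem:phase2} by passing to weakly convergent subsequences (compactness of $\Pi$) and transferring the invariants with the limit formulas, your use of Lemma \ref{lem:dissipate} being just a packaged form of the paper's direct appeal to Theorem \ref{thm:lim-Sep}. The only difference is presentational — the paper writes this as a short chain of equivalences over all subsequential limits, while you unpack it into two contradiction arguments with explicit handling of the $\varepsilon$-shifts and monotonicity, which the paper leaves implicit.
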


\begin{proof}
  It follows from Theorem \ref{thm:lim-Sep} and the compactness of $\Pi$ that
  (1) holds if and only if for any weakly convergent subsequence
  of $\{\cP_n\}$, its weak limit is weakly dissipated.
  By Lemma \ref{lem:phase2},
  this is equivalent to that for any weakly convergent subsequence
  of $\{\cP_n\}$, its weak limit, say $\cP$, satisfies
  $\ObsDiam(\cP;-\kappa) > 0$ for any $\kappa$ with $0 < \kappa < 1$,
  which is also equivalent to (2)
  by Theorem \ref{thm:lim-ObsDiam} and the compactness of $\Pi$.
  This completes the proof.
\end{proof}

Using Lemma \ref{lem:phase3}, we present:

\begin{proof}[Proof of Theorem \ref{thm:phase}]
  Let $\{c_n\}$ be a sequence of positive real numbers.
  (1) of Definition \ref{defn:phase} is equivalent to
  \begin{equation} \label{eq:phase1}
    \limsup_{n\to\infty} \ObsDiam(c_n\cP_n;-\kappa) < +\infty
  \end{equation}
  for any $\kappa$ with $0 < \kappa < 1$.
  (2) of Definition \ref{defn:phase} is equivalent to
  the weak dissipation property of $\{c_n\cP_n\}$,
  which is, by Lemma \ref{lem:phase3}, also equivalent to
  \begin{equation} \label{eq:phase2}
    \liminf_{n\to\infty} \ObsDiam(c_n\cP_n;-\kappa) > 0
  \end{equation}
  for any $\kappa$ with $0 < \kappa < 1$.
  Since
  \[
  \ObsDiam(c_n\cP_n;-\kappa) = c_n\ObsDiam(\cP_n;-\kappa),
  \]
  we obtain the theorem.
\end{proof}

\begin{rem}
  We have another way to prove Theorem \ref{thm:phase}
  by generalizing Lemma \ref{lem:phase} for a pyramid.
  However, this is essentially same as above.
\end{rem}

\begin{cor}
  Let $\{\cP_n\}$ be a sequence of pyramids with the phase transition property
  and $\{c_n\}$ a sequence with critical scale order.
  If a sequence $\{t_n\}$ of positive real numbers satisfies $t_n \sim c_n$,
  then $\{t_n\cP_n\}$ neither is a L\'evy family nor $\infty$-dissipates.
\end{cor}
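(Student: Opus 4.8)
The plan is to reduce both claims to two-sided control of the observable diameter of $\{t_n\cP_n\}$, and then read off the two negative conclusions from Corollary \ref{cor:Levy} and Proposition \ref{prop:ObsDiam-Sep-pyramid}(2).

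First I would extract the key estimate from the proof of Theorem \ref{thm:phase}. Since $\{c_n\}$ is a critical scale order, both (1) and (2) of Definition \ref{defn:phase} hold; as shown in that proof, (1) is equivalent to $\limsup_{n\to\infty}\ObsDiam(c_n\cP_n;-\kappa)<+\infty$, while (2) is equivalent, via the remark following Definition \ref{defn:phase} together with Lemma \ref{lem:phase3}, to $\liminf_{n\to\infty}\ObsDiam(c_n\cP_n;-\kappa)>0$, both for every $\kappa$ with $0<\kappa<1$. Writing $\ObsDiam(c_n\cP_n;-\kappa)=c_n\ObsDiam(\cP_n;-\kappa)$ via Proposition \ref{prop:ObsDiam-scale}, this says precisely that $c_n\ObsDiam(\cP_n;-\kappa)\sim 1$ for each such $\kappa$.

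Next I would propagate this to $\{t_n\cP_n\}$. Because $t_n\sim c_n$, the ratios $t_n/c_n$ and $c_n/t_n$ are bounded, so Proposition \ref{prop:ObsDiam-scale} gives, for each $\kappa$ with $0<\kappa<1$,
\[
\ObsDiam(t_n\cP_n;-\kappa)=t_n\ObsDiam(\cP_n;-\kappa)=\frac{t_n}{c_n}\bigl(c_n\ObsDiam(\cP_n;-\kappa)\bigr)\sim 1 .
\]
In particular $\ObsDiam(t_n\cP_n;-\kappa)$ is bounded above and bounded below by a positive constant (depending on $\kappa$).

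From this the two assertions follow immediately. Fixing any $\kappa\in(0,1)$, the quantity $\ObsDiam(t_n\cP_n;-\kappa)$ stays bounded below by a positive constant, hence does not tend to $0$; by the equivalence of (1) and (3) in Corollary \ref{cor:Levy}, $\{t_n\cP_n\}$ is not a L\'evy family. For the failure of $\infty$-dissipation, I would fix $\kappa\in(0,1/2)$ and $\kappa'\in(0,\kappa)$ and apply Proposition \ref{prop:ObsDiam-Sep-pyramid}(2) to obtain $\Sep(t_n\cP_n;\kappa,\kappa)\le\ObsDiam(t_n\cP_n;-\kappa')$, whose right-hand side is bounded above; since $2\kappa<1$, the resulting bound $\liminf_{n\to\infty}\Sep(t_n\cP_n;\kappa,\kappa)<+\infty$ violates the defining requirement of $\infty$-dissipation. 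The only step requiring care is the first one, namely correctly invoking the internal equivalences from the proof of Theorem \ref{thm:phase} to reach the two-sided bound $c_n\ObsDiam(\cP_n;-\kappa)\sim 1$; once that is in hand, the remainder is a direct application of the scaling identity and the observable-diameter/separation-distance comparison.
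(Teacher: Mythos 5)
Your proof is correct and follows essentially the same route as the paper's: both extract the two-sided bounds \eqref{eq:phase1} and \eqref{eq:phase2} (i.e., $\limsup_{n}\ObsDiam(c_n\cP_n;-\kappa)<+\infty$ and $\liminf_{n}\ObsDiam(c_n\cP_n;-\kappa)>0$) from the proof of Theorem \ref{thm:phase}, then rule out the L\'evy property via Corollary \ref{cor:Levy} and rule out $\infty$-dissipation via Proposition \ref{prop:ObsDiam-Sep-pyramid}. The only cosmetic difference is that the paper first reduces to the case $t_n=c_n$ using Propositions \ref{prop:ObsDiam-scale} and \ref{prop:Sep-scale}, whereas you propagate the bounds to $t_n\cP_n$ directly through the scaling identity and the boundedness of $t_n/c_n$ and $c_n/t_n$.
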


\begin{proof}
  It suffices to prove the corollary for $t_n := c_n$
  because of Propositions \ref{prop:ObsDiam-scale} and \ref{prop:Sep-scale}.

  It follows from \eqref{eq:phase2} that $\{c_n\cP_n\}$ is not a L\'evy family.

  By \eqref{eq:phase1} and Proposition \ref{prop:ObsDiam-Sep-pyramid},
  $\{c_n\cP_n\}$ does not $\infty$-dissipates.
  This completes the proof.
\end{proof}

\begin{proof}[Proof of Corollary \ref{cor:sym-sp}]
  We have
  \[
  \ObsDiam(X_n;-\kappa) \sim 1/\sqrt{n}
  \]
  for $X_n = S^n(1), \RP^n, \CP^n, \HP^n$
  (see \cite{Sy:mmlim}*{Corollaries 5.8 and 5.11} for $S^n(1)$ and $\CP^n$;
  the same proof works for $\RP^n$ and $\HP^n$).
  Theorem \ref{thm:phase}
  proves the phase transition property for $\{S^n(1)\}$, $\{\RP^n\}$, $\{\CP^n\}$,
  and $\{\HP^n\}$.
  
  Since the Ricci curvature of $SO(n)$ is $\sim n$,
  we have
  \[
  \ObsDiam(SO(n);-\kappa) \le O(1/\sqrt{n})
  \]
  (see \cite{Sy:book}*{\S 2.5}).
  By $S^n(1) \prec SO(n)$, we also have
  a lower bound of $\ObsDiam(SO(n);-\kappa)$, so that
  \[
  \ObsDiam(SO(n);-\kappa) \sim 1/\sqrt{n}.
  \]
  This together with Theorem \ref{thm:phase} leads us
  to the phase transition property for $\{SO(n)\}$.
  Since $S^n(1) \prec V_k(\R^n) \prec SO(n)$, we have the phase transition property
  for $\{V_{k_n}(\R^n)\}$.
  The proofs for $\{SU(n)\}$, $\{Sp(n)\}$, $\{V_{k_n}(\C^n)\}$, and $\{V_{k_n}(\H^n)\}$
  are in the same way.
  This completes the proof.
\end{proof}

\begin{defn}
  Let $\alpha > 0$.
  An mm-space is said to be \emph{$\alpha$-atomic}
  if it has an atom with mass $\ge \alpha$.
  A pyramid $\cP$ is said to be \emph{$\alpha$-atomic}
  if any mm-space $X \in \cP$ is $\alpha$-atomic.
  A pyramid (resp.~an mm-space) is \emph{atomic}
  if it is $\alpha$-atomic for some $\alpha > 0$.
  A pyramid (resp.~an mm-space) is \emph{non-atomic} if it is not atomic.
\end{defn}

\begin{prop}
  Let $\{\cP_n\}$ be a sequence of pyramids with the phase transition property
  and $\{c_n\}$ a sequence  of critical scale order for $\{\cP_n\}$.
  Then, the limit, say $\cP$, of any weakly convergent subsequence of $\{c_n\cP_n\}$
  satisfies
  \begin{equation} \label{eq:crit-lim}
    0 < \ObsDiam(\cP;-\kappa) < +\infty
  \end{equation}
  for any $\kappa$ with $0 < \kappa < 1$.
  In particular, $\cP$ is non-atomic.
\end{prop}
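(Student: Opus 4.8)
The plan is to translate the phase-transition hypothesis into two-sided control of the rescaled observable diameters and then transport this control to the weak limit via the limit formula, Theorem \ref{thm:lim-ObsDiam}. First I would record the only two facts about $\{\cP_n\}$ that the argument uses. Since $\{\cP_n\}$ has the phase transition property with critical scale order $\{c_n\}$, conditions (1) and (2) of Definition \ref{defn:phase} hold for this $\{c_n\}$; by the remark following Definition \ref{defn:phase} together with Lemma \ref{lem:phase3} these are precisely \eqref{eq:phase1} and \eqref{eq:phase2}, namely that for every $\kappa$ with $0<\kappa<1$,
\[
\limsup_{n\to\infty}\ObsDiam(c_n\cP_n;-\kappa) < +\infty
\quad\text{and}\quad
\liminf_{n\to\infty}\ObsDiam(c_n\cP_n;-\kappa) > 0.
\]

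Now let $\{c_{n_k}\cP_{n_k}\}$ be a weakly convergent subsequence with weak limit $\cP$, and apply Theorem \ref{thm:lim-ObsDiam} to it, which expresses $\ObsDiam(\cP;-\kappa)$ both as $\lim_{\varepsilon\to0+}\liminf_k$ and as $\lim_{\varepsilon\to0+}\limsup_k$ of $\ObsDiam(c_{n_k}\cP_{n_k};-(\kappa+\varepsilon))$. For the lower bound of \eqref{eq:crit-lim} I would use the $\liminf$ version: since $\ObsDiam$ is monotone nonincreasing in its second argument, $\liminf_k\ObsDiam(c_{n_k}\cP_{n_k};-(\kappa+\varepsilon))$ is nonincreasing in $\varepsilon$, so its limit as $\varepsilon\to0+$ is the supremum over $\varepsilon>0$ and hence dominates the single value at any fixed $\varepsilon_0$ with $\kappa+\varepsilon_0<1$. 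Passing from the subsequence to the full sequence via $\liminf_k a_{n_k}\ge\liminf_n a_n$ and invoking the second displayed inequality gives $\ObsDiam(\cP;-\kappa)\ge\liminf_n\ObsDiam(c_n\cP_n;-(\kappa+\varepsilon_0))>0$.

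The finiteness (upper) bound is the delicate point, and I expect it to be the \emph{main obstacle}: the limit formula presents $\ObsDiam(\cP;-\kappa)$ as a supremum over $\varepsilon$, which a priori could be $+\infty$ even though each term at fixed $\varepsilon$ is finite, so using \eqref{eq:phase1} at the argument $\kappa+\varepsilon$ directly would be circular. The device to break this is to compare against a strictly smaller parameter: fix any $\kappa'$ with $0<\kappa'<\kappa$. By monotonicity of $\ObsDiam$ in its second argument one has $\ObsDiam(c_{n_k}\cP_{n_k};-(\kappa+\varepsilon))\le\ObsDiam(c_{n_k}\cP_{n_k};-\kappa')$ for \emph{every} $\varepsilon>0$, so the estimate $\limsup_k(\cdots)\le\limsup_n\ObsDiam(c_n\cP_n;-\kappa')$ is uniform in $\varepsilon$. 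Letting $\varepsilon\to0+$ then yields $\ObsDiam(\cP;-\kappa)\le\limsup_n\ObsDiam(c_n\cP_n;-\kappa')<+\infty$ by \eqref{eq:phase1}. Together with the previous paragraph this establishes \eqref{eq:crit-lim}.

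Finally I would deduce non-atomicity from the positivity half of \eqref{eq:crit-lim}. Suppose toward a contradiction that $\cP$ is $\alpha$-atomic for some $\alpha\in(0,1]$. For any $X\in\cP$ with an atom $p$ of mass $\ge\alpha$ and any $1$-Lipschitz $f:X\to\R$, the pushforward $f_*\mu_X$ carries an atom of mass $\ge\alpha$ at $f(p)$, so the singleton $\{f(p)\}$ witnesses $\diam(f_*\mu_X;1-\kappa)=0$ as soon as $1-\kappa\le\alpha$. Taking $\kappa=1-\alpha/2\in(0,1)$, this gives $\ObsDiam(X;-(\kappa+\delta))=0$ for all $X\in\cP$ and all sufficiently small $\delta>0$, hence $\ObsDiam(\cP;-\kappa)=0$ by Definition \ref{defn:ObsDiam-pyramid}, contradicting \eqref{eq:crit-lim}. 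Therefore $\cP$ is non-atomic.
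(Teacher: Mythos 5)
Your proof is correct and follows essentially the same route as the paper: it converts the phase transition property into the two-sided bounds \eqref{eq:phase1} and \eqref{eq:phase2} (the paper cites Theorem \ref{thm:phase}, whose proof consists of exactly the remark and Lemma \ref{lem:phase3} you invoke), transfers them to the limit pyramid via Theorem \ref{thm:lim-ObsDiam}, and derives non-atomicity by exhibiting a $\kappa$ with $\ObsDiam(\cP;-\kappa)=0$ from an atom. The only differences are presentational: you spell out the monotonicity details the paper leaves implicit (in particular the bound uniform in $\varepsilon$ via a fixed $\kappa'<\kappa$, which is indeed the right way to read the paper's terse ``implies''), and you take $\kappa=1-\alpha/2$ rather than $\kappa_\alpha=1-\alpha$, which incidentally avoids the degenerate case $\alpha=1$.
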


\begin{proof}
  It follows from the phase transition property and Theorem \ref{thm:phase} that
  \begin{align*}
    \liminf_{n\to\infty} \ObsDiam(c_n\cP_n;-\kappa) &> 0, \\
    \limsup_{n\to\infty} \ObsDiam(c_n\cP_n;-\kappa) &< +\infty
  \end{align*}
  for any $\kappa$ with $0 < \kappa < 1$.
  These inequalities together with Theorem \ref{thm:lim-ObsDiam}
  implies \eqref{eq:crit-lim}.
 
  We prove that $\cP$ is non-atomic.
  We see that if an mm-space $X$ is $\alpha$-atomic for a real number $\alpha > 0$,
  then
  \[
  \ObsDiam(X;-\kappa_\alpha) \le \diam(X;1-\kappa_\alpha) = 0
  \]
  for $\kappa_\alpha := 1-\alpha$.
  Therefore, if $\cP$ is $\alpha$-atomic for some $\alpha > 0$,
  then we have $\ObsDiam(\cP;-\kappa_\alpha) = 0$,
  which is a contradiction to \eqref{eq:crit-lim}.
  This completes the proof.
\end{proof}

\begin{bibdiv}
  \begin{biblist}

\bib{Bil}{book}{
   author={Billingsley, Patrick},
   title={Convergence of probability measures},
   series={Wiley Series in Probability and Statistics: Probability and
   Statistics},
   edition={2},
   note={A Wiley-Interscience Publication},
   publisher={John Wiley \& Sons Inc.},
   place={New York},
   date={1999},
   pages={x+277},
   isbn={0-471-19745-9},
}

\bib{Bog}{book}{
   author={Bogachev, V. I.},
   title={Measure theory. Vol. I, II},
   publisher={Springer-Verlag},
   place={Berlin},
   date={2007},
   pages={Vol. I: xviii+500 pp., Vol. II: xiv+575},
   isbn={978-3-540-34513-8},
   isbn={3-540-34513-2},
}

\bib{BBI}{book}{
   author={Burago, Dmitri},
   author={Burago, Yuri},
   author={Ivanov, Sergei},
   title={A course in metric geometry},
   series={Graduate Studies in Mathematics},
   volume={33},
   publisher={American Mathematical Society},
   place={Providence, RI},
   date={2001},
   pages={xiv+415},
   isbn={0-8218-2129-6},
}

\bib{Ek}{article}{
    author={Elek, G.},
    title={Sampling and observables.  Invariants of metric measure spaces},
    note={preprint},
}

\bib{FnSy}{article}{
   author={Funano, K.},
   author={Shioya, T.},
   title={Concentration, Ricci curvature, and eigenvalues of Laplacian},
   status={Geom. Funct. Anal. 23 (2013), Issue 3, 888-936.},
}

\bib{GmvMlm}{article}{
   author={Gromov, M.},
   author={Milman, V. D.},
   title={A topological application of the isoperimetric inequality},
   journal={Amer. J. Math.},
   volume={105},
   date={1983},
   number={4},
   pages={843--854},
   issn={0002-9327},
}

\bib{Gmv:greenbook}{book}{
   author={Gromov, Misha},
   title={Metric structures for Riemannian and non-Riemannian spaces},
   series={Modern Birkh\"auser Classics},
   edition={Reprint of the 2001 English edition},
   note={Based on the 1981 French original;
   With appendices by M. Katz, P. Pansu and S. Semmes;
   Translated from the French by Sean Michael Bates},
   publisher={Birkh\"auser Boston Inc.},
   place={Boston, MA},
   date={2007},
   pages={xx+585},
   isbn={978-0-8176-4582-3},
   isbn={0-8176-4582-9},
}

\bib{Kch}{book}{
   author={Kechris, Alexander S.},
   title={Classical descriptive set theory},
   series={Graduate Texts in Mathematics},
   volume={156},
   publisher={Springer-Verlag},
   place={New York},
   date={1995},
   pages={xviii+402},
   isbn={0-387-94374-9},
}

\bib{Ld}{book}{
   author={Ledoux, Michel},
   title={The concentration of measure phenomenon},
   series={Mathematical Surveys and Monographs},
   volume={89},
   publisher={American Mathematical Society},
   place={Providence, RI},
   date={2001},
   pages={x+181},
   isbn={0-8218-2864-9},
}

\bib{Lv}{book}{
   author={L{\'e}vy, Paul},
   title={Probl\`emes concrets d'analyse fonctionnelle. Avec un compl\'ement
   sur les fonctionnelles analytiques par F. Pellegrino},
   language={French},
   note={2d ed},
   publisher={Gauthier-Villars},
   place={Paris},
   date={1951},
   pages={xiv+484},
}

\bib{Mlm:heritage}{article}{
   author={Milman, V. D.},
   title={The heritage of P.\ L\'evy in geometrical functional analysis},
   note={Colloque Paul L\'evy sur les Processus Stochastiques (Palaiseau,
   1987)},
   journal={Ast\'erisque},
   number={157-158},
   date={1988},
   pages={273--301},
   issn={0303-1179},
}


\bib{OzSy:prod}{article}{
  author={Ozawa, Ryunosuke},
  author={Shioya, Takashi},
  title={Estimate of observable diameter of $l_p$-product spaces},
  note={preprint},
}

\bib{Sy:book}{book}{
   author={Shioya, Takashi},
   title={Metric measure geometry--Gromov's theory of
     convergence and concentration of metrics and measures},
   note={preprint},
}

\bib{Sy:mmlim}{article}{
  author={Shioya, Takashi},
  title={Metric measure limit of spheres and complex projective spaces},
  note={preprint},
}

  \end{biblist}
\end{bibdiv}

\end{document}